\newcommand{\R}{\mathbb{R}}
\newcommand{\Z}{\mathbb{Z}}
\newcommand{\N}{\mathbb{N}}
\renewcommand{\P}{\mathbb{P}}
\newcommand{\B}{\mathbb{B}}
\newcommand\rank{\mathrm{rank}}
\newcommand*{\F}{\mathbb{F}}
\newcommand*{\fg}{finitely generated}
\newcommand*{\grpgen}[1]{\left\langle{#1}\right\rangle}
\newcommand*{\ie}{i.e.\ }
\newcommand*{\inn}[1]{\iota_{#1}}
\newcommand*{\inv}[1]{{#1}^{-1}}
\newcommand*{\restr}[2]{{#1}|_{#2}}
\newcommand*{\restrb}[2]{\left({#1}\right)|_{#2}}
\newcommand*{\size}[1]{{\left|#1\right|}}
\newcommand*{\tf}{torsion-free}
\newcommand*{\ZmodZ}[1]{\Z / #1 \Z}
\newcommand*{\ZpZ}{\ZmodZ{p}}
\newcommand*{\eps}{\varepsilon}
\DeclareMathOperator{\End}{End}
\DeclareMathOperator{\GL}{GL}
\DeclareMathOperator{\Mat}{Mat}
\DeclareMathOperator\Fix{Fix}
\DeclareMathOperator{\Id}{Id}
\DeclareMathOperator\Stab{Stab}
\DeclareMathOperator\dc{dc}
\DeclareMathOperator\tdc{tdc}
\DeclareMathOperator\CR{cr}
\DeclareMathOperator\tcr{tcr}
\renewcommand{\phi}{\varphi}
\renewcommand{\epsilon}{\varepsilon}
\let\originalleft\left
\let\originalright\right
\renewcommand{\left}{\mathopen{}\mathclose\bgroup\originalleft}
\renewcommand{\right}{\aftergroup\egroup\originalright}
\renewcommand{\le}{\leqslant}
\renewcommand{\ge}{\geqslant}
\theoremstyle{plain}
\newtheorem{thm}{Theorem}[section]
\newtheorem{lem}[thm]{Lemma}
\newtheorem{prop}[thm]{Proposition}
\newtheorem{cor}[thm]{Corollary}
\newtheorem*{notation}{Notation}
\newtheorem{remark}[thm]{Remark}
\newtheorem{quR}{Question}
\newtheorem*{rep@theorem}{\rep@title}
\newcommand{\newreptheorem}[2]{%
\newenvironment{rep#1}[1]{%
 \def\rep@title{#2 \ref{##1}}%
 \begin{rep@theorem}}%
 {\end{rep@theorem}}}
\newtheorem*{thm*}{Theorem}
\newtheorem*{lem*}{Lemma}
\newtheorem*{prop*}{Proposition}
\newtheorem*{cor*}{Corollary}
\newtheorem*{qu*}{Question}
\newtheorem*{dt*}{Definition and Theorem}
\newtheorem*{not*}{Notation}
\newtheorem*{exmp*}{Example}
\newtheorem*{exmps*}{Examples}
\newtheorem*{dprop*}{Definition and Proposition}
\newtheorem*{conj*}{Conjecture}
\theoremstyle{definition}
\newtheorem{defn}[thm]{Definition}
\newtheorem{exmp}[thm]{Example}
\newtheorem*{defn*}{Definition}
\newtheorem{conj}[thm]{Conjecture}
\theoremstyle{plain}
\newtheorem{rem}[thm]{Remark}
\newtheorem*{rem*}{Remark}
\crefname{prop}{Proposition}{Propositions}
\crefname{exmp}{Example}{Examples}
\crefname{cor}{Corollary}{Corollaries}
\crefname{lemma}{Lemma}{Lemmata}
\newcommand{\Address}{{
		\bigskip
		\small
		
		\textsc{Department of Mathematics, University of Manchester, UK \& \\ Heilbronn Institute for Mathematical Research, Bristol, UK}
		\par\nopagebreak
\textit{E-mail address}: \texttt{gemma.crowe@manchester.ac.uk} 
		
		\bigskip
		\par\nopagebreak
		\textsc{Institute for Mathematics, TU Berlin, Germany \& \\  Dept. of Mathematics, Heriot-Watt University \& \\ Maxwell Institute for Mathematical Sciences, Edinburgh} \par\nopagebreak
\textit{E-mail address}: \texttt{L.Ciobanu@hw.ac.uk, ciobanu@math.tu-berlin.de}

		\bigskip
		\par\nopagebreak
		\textit{E-mail address}: \texttt{pieter.senden@telenet.be}
		
		\bigskip
		\par\nopagebreak
		\textsc{Mathematical Institute, University of Oxford, UK} \par\nopagebreak
		\textit{E-mail address}: \texttt{corentin.bodart@maths.ox.ac.uk} 
}}
\title{Twisted commutativity and conjugacy ratio in groups}
\date{}
\author{Laura Ciobanu, Gemma Crowe and Pieter Senden\\ (with an Appendix by Corentin Bodart)}
\begin{document}

\maketitle
\begin{abstract}
In this paper we introduce and study the degree of twisted commutativity and the twisted conjugacy ratio of a finitely generated group $G$. The degree of twisted commutativity \(\tdc_X(\phi, G)\) generalises the degree of commutativity of $G$, by measuring the density of pairs of elements with trivial twisted commutators in the ball of radius $n$ of $G$, as $n \rightarrow \infty$, where the twisting is done with respect to an endomorphism $\phi$ of \(G\). We compute \(\tdc_X(\phi, G)\) for several classes of groups, including virtually abelian groups, groups of subexponential growth, and free groups.

We then study the twisted conjugacy ratio $\tcr_{X}(\phi, G)$, which is the limit at infinity of the quotient of the twisted conjugacy and standard growth functions. We 
compute $\tcr_{X}(\phi, G)$ for virtually abelian groups, and give examples of groups of exponential growth such that $\tcr_{X}(\phi, G) = 0$.\\

2020 Mathematics Subject Classification: 20P05, 20F69

\end{abstract}
\unmarkedfntext{\emph{Keywords}: degree of commutativity, conjugacy ratio, twisted conjugacy, Reidemeister number}

\section{Introduction}
We introduce and study the generalisations of two group concepts: firstly, we expand the degree of commutativity, defined in \cite{Gustafson} for finite groups and subsequently for infinite groups in \cite{dcA}, and secondly, we extend the notion of conjugacy ratio defined in \cite{CCM19}. Both of these concepts have received much attention in the last decade with respect to infinite groups \cite{Cox3, Guo25, HKZ24, Toiton, Val19}.

Our generalisation allows for `twisting' by an endomorphism, and leads to the concepts of \emph{degree of twisted commutativity} $\tdc$, and \emph{twisted conjugacy ratio} $\tcr$. In this paper we remark that the two quantities, $\tdc$ and $\tcr$, are equal in any finite group for the same endomorphism, and we explore the connections between them in infinite groups.

The degree of twisted commutativity of finitely generated groups measures the density of the pairs of elements with trivial twisted commutators in the ball of radius \(n\) of the group, as \(n\rightarrow\infty\). That is, we quantify how many pairs $(x,y)$ of elements `almost commute', where `almost commuting' means $xy=y\phi(x)$ for some fixed endomorphism $\phi$. More precisely, let \(G\) be a group generated by a finite set \(X\), let \(\B_{G,X}(n)\) denote the ball of radius \(n\) with respect to \(X\), and let \(\phi\) be an endomorphism of \(G\). The \emph{degree of twisted commutativity} \(\tdc_X(\phi, G)\) of \(G\), with respect to \(X\) and the `twist' \(\phi\), is then
\begin{align}\label{def:tdc}
\tdc_X(\phi,G)\coloneq\limsup_{n\rightarrow\infty}\frac{|\{(x, y) \in \B_{G,X}(n)^2 :  xy=y\phi(x)\}|}{|\B_{G,X}(n)|^2}.
\end{align}

For a finite group \(F\) and an endomorphism \(\phi\colon F \rightarrow F\), we define the \(\tdc(\phi,F)\) as
\begin{align}\label{tdcfinite}
	\tdc(\phi,F)\coloneq\frac{|\{ (x,y) \in F \times F \ : \ xy=y\phi(x) \}|}{|F|^2}.
\end{align}

There is a remarkable link, for finite $F$, between \(\tdc(\phi,F)\) and the \emph{Reidemeister number} \(R(\phi)\) of \(\phi\) (see \cref{prop:tdcFiniteGroupsReidemeisterNumber}), where \(R(\phi)\) is the number of \(\phi\)-conjugacy classes in $F$:
\begin{align}\label{tdcfiniteR}
	\tdc(\phi,F)=\frac{R(\phi)}{|F|}.
\end{align}
Here, for any group $G$, and any endomorphism $\phi$, two elements \(x, y \in G\) are \(\phi\)-\emph{conjugate}, and belong to the same $\phi$-\emph{conjugacy class}, if there exists a \(z \in G\) such that \(x = zy \inv{\phi(z)}\).

As previously mentioned, the degree of twisted commutativity extends the degree of commutativity \(\dc_X(G)\) of a group \(G\), that is, where \(\phi=\Id_{G}\) in \eqref{def:tdc}. For a finite group \(F\), it is a celebrated result by Gustafson \cite{Gustafson} that \(\dc(F)=1\) exactly if \(F\) is abelian, and \(\dc(F) \le 5/8\) otherwise. In fact, in any finite group \(F\), \(\dc(F)=\frac{|C(F)|}{|F|}\), where $|C(F)|$ is the number of conjugacy classes in \(F\), and since $R(\phi) \le |C(F)|$, we get $\tdc(\phi,F)\le\dc(F)$ for any endomorphism $\phi$ of $F$ (see \cref{cor:tdcAtMostDc}).


Our starting point is the following conjecture, as well as the similarities and differences between the twisted and `untwisted' degrees of commutativity.

\begin{conj}\cite[Conj. 1.6]{dcA} \label{maincon}
	Let \(G\) be a group generated by a finite set \(X\). Then \(\dc_X(G) > 0\) if and only if \(G\) is virtually abelian.
\end{conj}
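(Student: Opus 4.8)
The plan is to prove the two implications by rather different means: the backward direction (virtually abelian $\Rightarrow \dc_X(G)>0$) is elementary, while the forward direction is the substantial — and, in full generality, open — part, for which I would set up a centralizer-counting reduction and then close it only under structural hypotheses on $G$.

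For the easy direction, fix a finite-index abelian subgroup and replace it by its normal core $A\trianglelefteq G$, which is still abelian as a subgroup of an abelian group. Writing $G=\bigsqcup_{i=1}^{k}Ag_i$ and pushing the pieces of $\B_{G,X}(n)$ back by the $g_i$ gives $|\B_{G,X}(n)|\le k\,|\B_{G,X}(n+D)\cap A|$ with $D=\max_i|g_i|_X$; combined with the polynomial growth of the finitely generated abelian group $A$, which yields $|\B_{G,X}(n+D)\cap A|\le C\,|\B_{G,X}(n)\cap A|$, this produces a constant $c>0$ with $|\B_{G,X}(n)\cap A|\ge c\,|\B_{G,X}(n)|$ for all $n$. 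Since any pair in $(\B_{G,X}(n)\cap A)^2$ commutes, $|\{(x,y)\in\B_{G,X}(n)^2:xy=yx\}|\ge|\B_{G,X}(n)\cap A|^2\ge c^2|\B_{G,X}(n)|^2$, so $\dc_X(G)\ge c^2>0$; note this works for every $X$.

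For the converse I would argue contrapositively, assuming $G$ is not virtually abelian and aiming at $\dc_X(G)=0$. The starting point is the rewriting
\[
\dc_X(G)=\limsup_{n\to\infty}\ \frac{1}{|\B_{G,X}(n)|}\sum_{x\in\B_{G,X}(n)}\frac{|C_G(x)\cap\B_{G,X}(n)|}{|\B_{G,X}(n)|},
\]
which expresses $\dc_X(G)$ as a $\limsup$ of averages of relative centralizer densities, so that $\dc_X(G)>0$ would force, for infinitely many $n$, a positive-density set of $x\in\B_{G,X}(n)$ whose centralizers have positive relative density. Let $B(G)=\{x:[G:C_G(x)]<\infty\}$ be the FC-centre. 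By B.\,H.\,Neumann's theorem a finitely generated FC-group is centre-by-finite, hence virtually abelian, so if $B(G)$ had finite index in $G$ — whence $B(G)$ would be finitely generated — then $G$ itself would be virtually abelian; thus $[G:B(G)]=\infty$. One then wants: (i) $|\B_{G,X}(n)\cap B(G)|/|\B_{G,X}(n)|\to0$, killing the contribution of the fat-centralizer elements; and (ii) for $x\notin B(G)$, the infinite-index centralizer $C_G(x)$ also has relative density tending to $0$, on average.

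The main obstacle is precisely (ii), together with the quantitative form of (i): infinite index does \emph{not} in general imply vanishing relative density — a non-abelian free group has infinite-index subgroups whose balls capture a fixed positive proportion of the ambient balls — so one needs a genuine bound on the growth of centralizers relative to $G$, which is available only with extra structure. I would therefore complete the argument exactly where such control exists, following the approach of \cite{dcA}: for groups of polynomial growth, using Gromov's theorem and the fact that an infinite-index subgroup of a virtually nilpotent group has strictly smaller growth degree, hence relative density $0$; for non-elementary hyperbolic groups, where centralizers of infinite-order elements are virtually cyclic and thus exponentially thin inside an exponentially-growing ball while the torsion is exponentially negligible; for linear groups via the Tits alternative; and, with more care, for residually finite groups of subexponential growth via their finite quotients. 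Assembled with the reduction above, these settle the conjecture in all of these classes; the genuinely open case is general exponential growth, where no a priori bound on the fatness of a centralizer is known, and this I expect to be the true bottleneck.
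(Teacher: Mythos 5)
The statement you were asked about is labelled \verb|\begin{conj}| in the paper and cited verbatim from \cite[Conj.~1.6]{dcA}; the paper does \emph{not} prove it. It is presented purely as the motivating open problem, and the authors then develop twisted analogues (\cref{theo:tdcIsZeroForFGResFiniteGroupSubexpGrowthNonVirtuallyAbelian}, \cref{thm:CharacterisationTdcNonZeroVirtuallyAbelian}, \cref{thm:free}) that settle the $\tdc$-version only for residually finite groups of stable subexponential growth, virtually abelian groups, and free groups. So there is no ``paper's own proof'' against which to compare your attempt, and a complete proof of the conjecture would be a significant original result, not a reproduction.

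Given that, your treatment is essentially the correct one: you accurately identify the backward implication as the easy part and the forward implication as genuinely open. Your proof of the backward direction is correct and close to the folklore argument. The coset-shifting step $|\B(n)|\le k\,|\B(n+D)\cap A|$ via $g\mapsto (g\,g_{i(g)}^{-1},i(g))$ is fine, and the comparison $|\B(n+D)\cap A|\le C\,|\B(n)\cap A|$ holds because $A$, having finite index, has relative growth in $G$ of the same polynomial degree as $G$ (precisely what \cref{pepprop} and \eqref{eqn:ball_ratio} formalise), so you land on $\dc_X(G)\ge c^2>0$ for every $X$. For the forward direction, your reduction to centralizer densities, your invocation of B.~H.~Neumann's theorem to show the FC-centre has infinite index when $G$ is not virtually abelian, and your correct observation that infinite index alone does not force vanishing relative density are all on target; the partial cases you list (polynomial growth via Gromov, hyperbolic groups, linear groups via the Tits alternative, residually finite subexponential growth via finite quotients) match what is established in \cite{dcA} and related work. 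The one thing worth stating explicitly, which you hint at but do not say outright, is that this conjecture remains open in full generality precisely because of the exponential-growth case you flag as the bottleneck, and no proof should be expected here.
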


Although several results about the degree of twisted commutativity are similar to those for the degree of commutativity, the map \(\phi\) plays a very important role and gives sometimes different behaviours of the two quantities, as we will see in this paper. The main class of groups we investigate (see \cref{subexp}) are those of stable subexponential growth (see \cref{stable}). 
This includes all finitely generated virtually nilpotent groups. Since all finitely generated virtually nilpotent groups are residually finite, the theorem below holds for all virtually nilpotent groups.

\begin{reptheorem}{theo:tdcIsZeroForFGResFiniteGroupSubexpGrowthNonVirtuallyAbelian}
Let $G$ be a finitely generated residually finite group of stable subexponential growth that is not virtually abelian, and let $X$ be a finite generating set. Then \(\tdc_X(\phi,G)=0\) for any endomorphism $\phi$ of $G$. 
\end{reptheorem}

We then consider virtually abelian groups and show that \(\tdc_{X}(\phi, G)\) is intricately connected to the twist \(\phi\) and its action on the finite index free abelian subgroups of \(G\).

\begin{reptheorem}{thm:CharacterisationTdcNonZeroVirtuallyAbelian}
	Let \(G\) be an infinite, \fg{}, virtually abelian group with finite generating set \(X\). Let \(A\) be a \tf{}, fully invariant, abelian subgroup of finite index in \(G\). Let \(\phi \in \End(G)\) be an endomorphism and let \(\inn{g}\) denote conjugation by \(g\). Then the following hold:
	\begin{itemize}
	
	\item[(i)] \(\tdc_{X}(\phi, G) > 0\) if and only if there exists \(g \in G\) such that \(\restrb{\inn{g} \circ \phi}{A} = \Id_{A}\).
	
	\item[(ii)] For any finite generating set \(Y\) of \(G\), \(\tdc_{X}(\phi, G) = 0\) if and only if \(\tdc_{Y}(\phi, G) = 0\).
	\end{itemize}
\end{reptheorem}
\noindent Theorem \ref{thm:va_indep} in the Appendix strengthens Theorem \ref{thm:CharacterisationTdcNonZeroVirtuallyAbelian}, as it shows that $\tdc$ for virtually abelian groups is independent of generating set.

\medskip

We also link the degree of twisted commutativity of a virtually abelian group to the degrees of twisted commutativity of its finite quotients.
For a group \(G\) and a subgroup \(H\), we write \(H \le_{f} G\) if \(H\) has finite index in \(G\).

\begin{reptheorem}{theo:InfimumTdcFiniteQuotientsVirtuallyAbelian}
	Let \(G\) be an infinite, \fg{} virtually abelian group and let \(A\) be a \tf{} abelian fully invariant subgroup of finite index in \(G\). Let \(\phi \in \End(G)\). Suppose that \(\restrb{\inn{g} \circ \phi}{A} \ne \Id_{A}\) for all \(g \in G\). Then
	\[
		\inf\{\tdc(\overline{\phi}, G / N) : N \lhd_{f} G, \phi(N) \le N, \overline{\phi} \text{ induced endomorphism on } G / N\} = 0.
	\]

	This infimum is also equal to \(\tdc_{X}(\phi, G)\) for any finite generating set \(X\) of \(G\).
\end{reptheorem}

In the realm of groups of exponential growth, we cover the case of free groups:
\begin{reptheorem}{thm:free}
	Let $\F=\F(X)$ be a free group of finite rank $m\ge 2$ and basis $X$, and let $\phi \in \End(\F)$. Then $\tdc_X(\phi,\F)=0$.
\end{reptheorem}
The proof for free groups relies on analysing the growth rates of fixed subgroups of free groups (see \cref{prop:free}) and we expect our results on growth rates to generalise to further groups of exponential growth where the fixed subgroups are understood.

The second goal of the paper is to generalise the notion of `conjugacy ratio', which compares the number of conjugacy classes to the number of elements in a suitable way (see \cite{CCM19}), to the notion of \emph{twisted conjugacy ratio}. For a group $G$ with finite generating set $X$ and for \(\phi \in \mathrm{End}(G)\), we let \(C^{\phi}_{G,X}(n)\) denote the set of \(\phi\)-conjugacy classes of \(G\) which have a representative in \(\mathbb{B}_{G,X}(n)\). The \emph{\(\phi\)-twisted conjugacy ratio} of \(G\) with respect to \(X\) is defined as 
\begin{equation}
	\tcr_{X}(\phi, G) \coloneq \limsup\limits_{n\rightarrow \infty} \frac{|C^{\phi}_{G,X}(n)|}{|\mathbb{B}_{G,X}(n)|}.
\end{equation}
Analogous to the equality in the `untwisted' version, the degree of twisted commutativity and the twisted conjugacy ratio are equal in finite groups (see \cref{rmk:finite groups tcr tdc}): for any finite group $F$, \(\tcr_{X}(\phi, F)\) does not depend on \(X\) and
\begin{equation}
	\tcr(\phi, F) = \frac{R(\phi)}{\size{F}} = \tdc(\phi, F).
\end{equation}

We first consider groups of stable subexponential growth, and show a twisted adaptation of \cite[Proposition 3.4]{CCM19} for virtually abelian groups.

\begin{reptheorem}{prop:tcrNonZeroVirtuallyAbelianGroups}
    Let \(G\) be a finitely generated virtually abelian group and \(A\) an abelian normal subgroup such that \([G : A] = m\).
    Let \(\phi \in \End(G)\) be such that \(\phi(A) \le A\) and \(X\) be a generating set of \(G\).
    Suppose there exists \(g \in G\) such that \(\restrb{\inn{g} \circ \phi}{A} = \Id_{A}\).
    Then \(\tcr_{X}(\phi, G) > 0\).
    
    More precisely, if \(g_{1}, \ldots, g_{m}\) are coset representatives of \(G / A\) and there exists an integer \(k \ge 1\) such that 
    \begin{enumerate}[(i)]
    	\item \(\restrb{\inn{g_{i}} \circ \phi}{A} = \Id_{A}\) for \(i \in \{1, \ldots, k\}\), and
	\item \([g_{i}A]_{\bar{\phi}} \ne [g_{j}A]_{\bar{\phi}}\) for \(i, j \in \{1, \ldots, k\}\) with \(i \ne j\), where \(\bar{\phi}\colon G / A \to G / A: gA \mapsto \phi(g)A\),
    \end{enumerate}
    then \(\mathrm{tcr}_{X}(\phi, G) \ge \frac{k}{m^{2}}\). 
\end{reptheorem}

\noindent Theorem \ref{thm:va_tcr_indep} in the Appendix strengthens Theorem \ref{prop:tcrNonZeroVirtuallyAbelianGroups}, as it shows that $\tcr$ for virtually abelian groups is independent of the generating set.


We also consider examples of groups with exponential growth, such that the twisted conjugacy ratio equals zero.

\begin{reptheorem}{thm:hyp group tcr 0}
    Let $G$ be a hyperbolic group, and let $\phi \in \mathrm{Aut}(G)$ be of finite order. Then $\tcr_{X}(\phi, G) = 0$ for any finite generating set $X$ of $G$. 
\end{reptheorem}

We show that $\tcr_{X}(\phi, G) = 0$ also holds for certain Artin groups $G$ and selected automorphisms $\phi$, as in \cref{prop:DA} and \cref{exmp:RAAG tcr 0}.
 
Finally, while this paper introduces the two concepts in the title and provides computations for several classes of groups, there are many research avenues widely open. Our initial results lead to open questions, conjectures and invitations to generalisations, which we collect in \cref{sec:reflections}.  An Appendix by Corentin Bodart, added after the first version of the paper was made public, shows that, for virtually abelian groups, the values of $\tdc$ and $\tcr$ do not depend on the generating set, but are group invariants.

\section{Preliminaries}
We start by recalling and proving the necessary concepts on twisted conjugacy, Reidemeister numbers and fixed subgroups, and by establishing connections between the Reidemeister number of \(\phi\) and the \(\tdc(\phi, F)\) for a finite group \(F\). We also set up the basic notation for growth in infinite groups.

\subsection{Twisted conjugacy, twisted commutativity, and fixed subgroups}
Let \(G\) be a group and \(\phi \in \End(G)\). Two elements \(x, y \in G\) are \emph{\(\phi\)-conjugate} if \(x = zy\inv{\phi(z)}\) for some \(z \in G\). The \(\phi\)-conjugacy class of \(x\) is denoted by \([x]_{\phi}\). The \emph{Reidemeister number} of \(\phi\) is the (finite or infinite) number of \(\phi\)-conjugacy classes and is denoted by \(R(\phi)\). Given \(g \in G\), the \emph{\(\phi\)-twisted stabiliser} of \(g\) is the subgroup
	\[
		\Stab_{\phi}(g) \coloneq \{x \in G : xg\inv{\phi(x)} = g\}.
	\]
We write \(\inn{g}: G \to G: x \mapsto gx\inv{g}\) for the inner automorphism (or conjugation) associated to \(g\), and if \(H \le G\) is a subgroup such that \(\phi(H) \le H\), we write \(\restr{\phi}{H}\) for the induced endomorphism on \(H\).

The subgroup of fixed points of \(\phi\), or `fixed subgroup of \(\phi\)',  is defined as
\begin{equation}\label{def:fixed}
	\Fix(\phi)\coloneq\{g\in G : \phi(g)=g\},
\end{equation}
and fixed subgroups are linked to  \(\phi\)-twisted stabilisers via conjugation:
\begin{lem}[{\cite[Lemma~T.1.10]{Senden23}}]\label{lem:fixstab}
	Let \(G\) be a group, \(\phi \in \End(G)\) and \(g \in G\). Then \(\Stab_{\phi}(g) = \Fix(\inn{g} \circ \phi)\).
\end{lem}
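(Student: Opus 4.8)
The plan is to prove the two subgroups are equal by showing that the defining condition for membership in one is literally a rearrangement of the defining condition for membership in the other. Unwinding the definitions, \(x \in \Stab_{\phi}(g)\) means \(xg\inv{\phi(x)} = g\), while \(x \in \Fix(\inn{g} \circ \phi)\) means \((\inn{g} \circ \phi)(x) = x\), i.e.\ \(g\phi(x)\inv{g} = x\). So the key step is the chain of equivalences
\[
	xg\inv{\phi(x)} = g \iff xg = g\phi(x) \iff \inv{g}xg = \phi(x) \iff x = g\phi(x)\inv{g},
\]
each implication being multiplication on the left and/or right by \(g\) or \(\inv{g}\); the first and last expressions are exactly the membership conditions for \(\Stab_{\phi}(g)\) and \(\Fix(\inn{g} \circ \phi)\) respectively. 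Since these equivalences hold for every \(x \in G\), the two sets coincide, and as both are already known to be subgroups (a twisted stabiliser and a fixed subgroup) the identification is as groups.

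I do not anticipate any genuine obstacle here: the statement is a direct algebraic manipulation and requires no structural input about \(G\) or \(\phi\). The only point worth a moment's care is bookkeeping with inverses and the order of composition in \(\inn{g} \circ \phi\) (that \(\phi\) is applied first, then conjugation by \(g\)), so that one writes \(g\phi(x)\inv{g}\) rather than \(\phi(g)\phi(x)\inv{\phi(g)}\) or similar; once that convention is fixed, the computation above is complete.
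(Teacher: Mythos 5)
Your proof is correct; the paper does not include its own argument (it simply cites \cite[Lemma~T.1.10]{Senden23}), but the definitional unwinding you give is precisely the standard one-line verification that any source would use. The chain of equivalences and the remark about the order of composition in \(\inn{g}\circ\phi\) are both accurate, so there is nothing to add.
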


\cref{lem:fixstab} can help us rewrite the numerator in the definition of \(\tdc_{X}(\phi, G)\):
\begin{align}
	\size{\{(x, y) \in (\B_{G, X}(n))^{2} : xy = y\phi(x)\}}	&= \sum_{y \in \B_{G,X}(n)} \size{\{x \in \B_{G, X}(n) : xy = y\phi(x)\}}	\nonumber\\
														&= \sum_{y \in \B_{G, X}(n)} \size{\Stab_{\phi}(y) \cap \B_{G, X}(n)}	\label{eq:tdcInStabilisers}\\
														&= \sum_{y \in \B_{G, X}(n)} \size{\Fix(\inn{y} \circ \phi) \cap \B_{G, X}(n)}.	\label{eq:tdcInFixedPoints}
\end{align}
Thus, the definition of \(\tdc_X(\phi,G)\) expands to:
\begin{align}	\label{def:tdcsum}
	\tdc_X(\phi,G)=\limsup_{n\rightarrow\infty}\frac{\sum_{g\in \B_{G,X}(n)}| \Fix(\inn{g} \circ \phi)\cap \B_{G,X}(n)|}{|\B_{G,X}(n)|^2}.
\end{align}
For finite groups, there are immediate connections between the concepts defined above.
\begin{prop}	\label{prop:tdcFiniteGroupsReidemeisterNumber}
	Let \(F\) be a finite group and \(\phi \in \End(F)\). Then
	\[
		\tdc(\phi, F) = \frac{R(\phi)}{\size{F}}.
	\]
\end{prop}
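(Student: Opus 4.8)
The plan is to count the set $S \coloneq \{(x,y) \in F \times F : xy = y\phi(x)\}$ by fibring over the second coordinate and recognising each fibre as a twisted stabiliser. By \eqref{eq:tdcInStabilisers}--\eqref{eq:tdcInFixedPoints} (applied with $\B_{G,X}(n)$ replaced by all of $F$, which is what happens once $n$ is large enough, or simply by running the same elementary manipulation directly), we have
\[
	\size{S} = \sum_{y \in F} \size{\Stab_{\phi}(y)} = \sum_{y \in F} \size{\Fix(\inn{y} \circ \phi)}.
\]
So it suffices to show $\sum_{y \in F} \size{\Stab_{\phi}(y)} = R(\phi)\cdot\size{F}$, and then divide by $\size{F}^{2}$.

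First I would observe that $\Stab_{\phi}(y)$ is precisely the stabiliser of $y$ under the \emph{twisted conjugation action} of $F$ on itself, namely the action $z \cdot y = zy\inv{\phi(z)}$; one checks this is a genuine left action of $F$ on $F$, and its orbits are exactly the $\phi$-conjugacy classes, so the number of orbits is $R(\phi)$. Next I would apply the orbit--stabiliser theorem: for each $y$, the orbit of $y$ has size $\size{F} / \size{\Stab_{\phi}(y)}$. Summing the reciprocal relation, $\size{\Stab_{\phi}(y)} = \size{F} / \size{[y]_{\phi}}$, over all $y \in F$ and grouping the sum by orbit gives
\[
	\sum_{y \in F} \size{\Stab_{\phi}(y)} = \sum_{\text{orbits } O} \sum_{y \in O} \frac{\size{F}}{\size{O}} = \sum_{\text{orbits } O} \size{F} = R(\phi)\cdot\size{F},
\]
since each orbit contributes $\size{O} \cdot \bigl(\size{F}/\size{O}\bigr) = \size{F}$. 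Dividing by $\size{F}^{2}$ yields $\tdc(\phi, F) = R(\phi)/\size{F}$.

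The only real point requiring care — and the step I would expect a referee to scrutinise — is the verification that $z \cdot y = zy\inv{\phi(z)}$ is a left action when $\phi$ is merely an endomorphism rather than an automorphism: one needs $(z_{1}z_{2})\cdot y = z_{1}\cdot(z_{2}\cdot y)$, which holds because $\phi$ is a homomorphism, $\inv{\phi(z_{1}z_{2})} = \inv{\phi(z_{2})}\inv{\phi(z_{1})}$, and no inversion of $\phi$ is needed. The identity $e \cdot y = y$ is immediate. Everything else is the standard orbit--stabiliser bookkeeping, so there is no genuine obstacle; this is essentially Burnside-style counting adapted to the twisted setting.
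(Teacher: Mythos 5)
Your proof is correct and follows essentially the same route as the paper: rewrite the numerator as $\sum_{y\in F}\size{\Stab_{\phi}(y)}$ via the twisted-stabiliser identity, then apply the orbit--stabiliser theorem for the twisted conjugation action and sum over orbits. The only difference is cosmetic (you spell out that $z\cdot y = zy\inv{\phi(z)}$ is a genuine action even when $\phi$ is only an endomorphism, which the paper leaves implicit), so there is nothing substantive to flag.
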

\begin{proof}
	By the orbit-stabiliser theorem, we have that
\[
	R(\phi)	=	\sum_{g \in F} \frac{1}{\size{[g]_{\phi}}} =	\frac{1}{\size{F}} \sum_{g \in F} \size{\Stab_{\phi}(g)}.
\]
Since \(F\) is a finite group, the summation on the right-hand side equals \(\size{F}^{2} \cdot \tdc(\phi, F)\) by \eqref{eq:tdcInStabilisers}. The result then follows.
\end{proof}

\begin{prop}[{\cite[Theorem~5]{FelshtynHill94}}]	\label{prop:ReidemeisterNumberEqualsFixedConjugacyClasses}
	Let \(F\) be a finite group and \(\phi \in \End(F)\). Then \(R(\phi)\) equals the number of ordinary conjugacy classes that are fixed by \(\phi\), \ie the number of conjugacy classes \([g]\) such that \([g] = [\phi(g)]\).
	
	In particular, \(R(\phi)\) is at most the number of conjugacy classes of \(F\).
\end{prop}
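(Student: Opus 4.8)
The plan is to prove this by a direct double-counting argument. Building on the proof of \cref{prop:tdcFiniteGroupsReidemeisterNumber}, the starting point is the orbit-stabiliser identity
\[
	R(\phi) = \frac{1}{\size{F}}\sum_{g \in F}\size{\Stab_{\phi}(g)}, \qquad \Stab_{\phi}(g) = \{x \in F : \inv{g}xg = \phi(x)\},
\]
so that $\size{F}\cdot R(\phi)$ counts the set $\{(g,x) \in F \times F : \inv{g}xg = \phi(x)\}$. The heart of the argument is to evaluate this count by summing over $x$ first and then regrouping by the ordinary conjugacy classes of $F$.

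In detail: swapping the order of summation gives $\size{F}\cdot R(\phi) = \sum_{x \in F}\size{\{g \in F : \inv{g}xg = \phi(x)\}}$. For a fixed $x$, the inner set is empty unless $\phi(x)$ is ordinarily conjugate to $x$, and when it is non-empty it is a coset of the centraliser $C_{F}(x)$, hence of size $\size{C_{F}(x)}$. The next observation is that the predicate ``$x$ is conjugate to $\phi(x)$'' depends only on the conjugacy class $[x]$: if $x' = hx\inv{h}$, then $\phi(x') = \phi(h)\phi(x)\inv{\phi(h)}$ is conjugate to $\phi(x)$, so $x' \sim \phi(x')$ exactly when $x \sim \phi(x)$. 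Hence the sum over $x$ breaks up into a sum over those conjugacy classes $C$ satisfying $[\phi(g)] = [g]$ for $g \in C$, and each such class contributes $\sum_{x \in C}\size{C_{F}(x)} = \size{C}\cdot\size{C_{F}(g)} = \size{F}$ by orbit-stabiliser for ordinary conjugation. Dividing through by $\size{F}$ yields $R(\phi) = \size{\{[g] : [\phi(g)] = [g]\}}$, which is exactly the claim; the ``in particular'' clause then follows at once, since the $\phi$-fixed conjugacy classes form a subset of all conjugacy classes of $F$.

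I do not anticipate a serious obstacle here: the computation is elementary, and the only points needing a line of justification are the identification of the solution set $\{g : \inv{g}xg = \phi(x)\}$ as a centraliser coset and the class-invariance of the predicate $x \sim \phi(x)$. It is worth stressing that, unlike the classical character-theoretic proof (counting $\phi$-fixed irreducible characters via the row/column symmetry of the character table), this counting argument makes no use of $\phi$ being invertible, so it applies verbatim to an arbitrary endomorphism of $F$, which is the generality required in the statement.
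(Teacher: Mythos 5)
Your proof is correct, and it fills in a step the paper handles only by citation: the authors simply attribute the statement to Felshtyn--Hill (their Theorem 5) and do not reproduce an argument. Starting from the identity $R(\phi) = \frac{1}{|F|}\sum_{g\in F}|\Stab_\phi(g)|$ already established in the proof of \cref{prop:tdcFiniteGroupsReidemeisterNumber}, your double count is sound at every step: $\Stab_\phi(g)=\{x:g^{-1}xg=\phi(x)\}$ matches the paper's definition $\{x : xg\phi(x)^{-1}=g\}$ after rearranging; for fixed $x$ the set $\{g : g^{-1}xg=\phi(x)\}$ is indeed empty unless $\phi(x)\sim x$ and otherwise a right coset of $C_F(x)$; the class-invariance of the predicate $x\sim\phi(x)$ (via $\phi(hxh^{-1})=\phi(h)\phi(x)\phi(h)^{-1}$) is exactly right; and each $\phi$-fixed class contributes $|C|\cdot|C_F(g)|=|F|$ by ordinary orbit--stabiliser. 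Dividing by $|F|$ gives $R(\phi)=|\{[g] : [\phi(g)]=[g]\}|$, and the ``in particular'' clause is immediate. Your closing remark is a genuine advantage of this route: the classical twisted Burnside argument (equating $R(\phi)$ to the number of $\phi$-fixed irreducible characters) is usually stated for automorphisms, whereas your counting uses nothing beyond the group action and so applies verbatim to an arbitrary endomorphism, which is the generality the paper actually needs.
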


\begin{cor}	\label{cor:tdcAtMostDc}
	Let \(F\) be a finite group and \(\phi \in \End(F)\). Then \(\tdc(\phi, F) \le \dc(F)\).	
\end{cor}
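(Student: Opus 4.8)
The plan is to reduce everything to the identity formula \eqref{tdcfinite}--\eqref{tdcfiniteR} and then invoke the Felshtyn--Hill bound. First I would apply \cref{prop:tdcFiniteGroupsReidemeisterNumber} to rewrite both sides in terms of Reidemeister numbers: on the one hand \(\tdc(\phi, F) = R(\phi)/\size{F}\), and on the other hand, taking \(\phi = \Id_{F}\) in that same proposition, \(\dc(F) = \tdc(\Id_{F}, F) = R(\Id_{F})/\size{F}\). (Here I am using that \(\dc(F)\) is by definition the special case \(\phi = \Id_{F}\) of \(\tdc\), so no appeal to Gustafson's explicit formula is even needed, though one could note that \(R(\Id_{F})\) is exactly the number of ordinary conjugacy classes of \(F\), recovering the classical identity \(\dc(F) = \size{C(F)}/\size{F}\).)

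Next I would compare the two numerators. Since \(\Id_{F}\)-conjugacy is ordinary conjugacy, \(R(\Id_{F})\) equals the number of conjugacy classes of \(F\). By \cref{prop:ReidemeisterNumberEqualsFixedConjugacyClasses}, \(R(\phi)\) equals the number of conjugacy classes \([g]\) with \([g] = [\phi(g)]\), which is at most the total number of conjugacy classes; that is, \(R(\phi) \le R(\Id_{F})\). Dividing through by \(\size{F}\) gives \(\tdc(\phi, F) = R(\phi)/\size{F} \le R(\Id_{F})/\size{F} = \dc(F)\), as required.

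There is no real obstacle here — the statement is a formal consequence of the two preceding propositions — so the only thing to be careful about is making the identification \(\dc(F) = R(\Id_{F})/\size{F}\) explicit, since \cref{prop:tdcFiniteGroupsReidemeisterNumber} is stated for a general endomorphism and the reader should see that \(\phi = \Id_{F}\) is a legitimate instance.
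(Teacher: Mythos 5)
Your proof is correct and matches the paper's argument essentially verbatim: both rewrite $\tdc(\phi,F) = R(\phi)/\size{F}$ via \cref{prop:tdcFiniteGroupsReidemeisterNumber}, identify $\dc(F) = \tdc(\Id,F) = R(\Id)/\size{F}$, and then invoke \cref{prop:ReidemeisterNumberEqualsFixedConjugacyClasses} to get $R(\phi) \le R(\Id)$. The only difference is presentational — you spell out the specialisation to $\phi = \Id_F$ more explicitly — and there is no gap.
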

\begin{proof}
	Since \(\tdc(\Id, F) = \dc(F)\) and \cref{prop:ReidemeisterNumberEqualsFixedConjugacyClasses} implies $R(\phi)\le R(\Id)$, we have
	\begin{equation*}\label{tdc-def}
		\tdc(\phi, F) = \frac{R(\phi)}{\size{F}} \le \frac{R(\Id)}{\size{F}} = \tdc(\Id, F) = \dc(F). \qedhere
	\end{equation*}
\end{proof}

We also state the famous result by Gustafson \cite{Gustafson} for reference purposes:
\begin{thm}	\label{theo:GustafsonsBound}
	Let \(F\) be a finite group.
	If \(F\) is non-abelian, then \(\dc(F) \le \frac{5}{8}\).
\end{thm}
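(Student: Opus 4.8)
The statement to prove is the classical $5/8$ theorem: if $F$ is a finite non-abelian group, then $\dc(F) \le 5/8$. Since the excerpt has already established that $\dc(F) = |C(F)|/|F|$, where $|C(F)|$ is the number of conjugacy classes, the plan is to bound the \emph{class number} $k(F) \coloneq |C(F)|$ from above by $\frac{5}{8}|F|$ whenever $F$ is non-abelian. The cleanest route is the class equation / centraliser argument rather than character theory. Write $Z = Z(F)$ for the centre. For each conjugacy class $[g]$, we have $|[g]| = [F : C_F(g)]$, and $[g]$ is a singleton exactly when $g \in Z$. So $k(F) = |Z| + (\text{number of non-central classes})$, and each non-central class has size at least $2$; more precisely each has size dividing $|F|$ and equal to $[F : C_F(g)]$ with $C_F(g)$ a \emph{proper} subgroup of $F$ containing $\grpgen{g, Z}$.

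First I would dispose of the case where $F/Z$ is non-abelian or ``large''. The key structural input is: if $F$ is non-abelian, then $F/Z$ is not cyclic (a standard fact: if $F/Z$ were cyclic, $F$ would be abelian). Hence $[F : Z] \ge 4$, and moreover the smallest non-abelian possibility forces $[F:Z] \ge 4$ with equality only when $F/Z \cong \ZmodZ{2} \times \ZmodZ{2}$. Now count: for $g \notin Z$, the centraliser $C_F(g)$ properly contains $Z$ and properly lies in $F$, so $|C_F(g)| \le |F|/2$, giving $|[g]| \ge 2$; but we need the sharper bound that comes from looking at how $F \setminus Z$ is partitioned. The decisive estimate is $k(F) \le \frac{1}{4}|F| + |Z| \cdot (\text{something})$ — the honest way is: partition $F \setminus Z$ into conjugacy classes, each of size $[F:C_F(g)]$; since $C_F(g) \supsetneq Z$, if $[F:Z]=4$ then $C_F(g)$ has index $2$, so every non-central class has size exactly $2$, giving $k(F) = |Z| + \frac{|F| - |Z|}{2} = \frac{|F|}{2} + \frac{|Z|}{2} = \frac{|F|}{2} + \frac{|F|}{8} = \frac{5}{8}|F|$, achieving the bound. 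If $[F:Z] \ge 5$, then $F/Z$ (being non-cyclic of order $\ge 5$) has order $\ge 6$, and one shows non-central classes are large enough that $k(F) < \frac{5}{8}|F|$; concretely, $\sum_{g \notin Z} \frac{1}{|[g]|} \le$ (number of non-central classes) and one uses $|[g]| \ge p$ where $p$ is the smallest prime dividing $[F:Z]$, combined with $|F \setminus Z| = |F|(1 - 1/[F:Z])$.

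The cleanest uniform way to package the above: the number of non-central conjugacy classes is at most $\frac{|F \setminus Z|}{2} = \frac{|F| - |Z|}{2}$ since each such class has $\ge 2$ elements, so
\begin{equation*}
	k(F) \le |Z| + \frac{|F| - |Z|}{2} = \frac{|F| + |Z|}{2} = \frac{|F|}{2}\left(1 + \frac{1}{[F:Z]}\right) \le \frac{|F|}{2}\left(1 + \frac{1}{4}\right) = \frac{5}{8}|F|,
\end{equation*}
using $[F:Z] \ge 4$. Dividing by $|F|$ gives $\dc(F) = k(F)/|F| \le 5/8$. The main obstacle — really the only genuine content — is justifying $[F:Z] \ge 4$ for non-abelian $F$: this follows from the lemma that $F/Z$ cyclic implies $F$ abelian (pick $xZ$ generating $F/Z$; then every element of $F$ is $x^i z$ with $z \in Z$, and such elements commute), together with the fact that there is no non-cyclic group of order $2$ or $3$, so the least non-cyclic order is $4$. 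Everything else is the class equation and the trivial inequality $|[g]| \ge 2$ for $g \notin Z$. One should also note the bound is sharp, witnessed by any non-abelian group with $[F:Z] = 4$ all of whose non-central classes have size $2$, e.g.\ the quaternion group $Q_8$ or the dihedral group $D_4$.
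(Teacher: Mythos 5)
The paper cites Gustafson's $5/8$ theorem without proof (it is \cref{theo:GustafsonsBound}, attributed to \cite{Gustafson}), so there is no in-paper argument to compare against. Your proof is the standard class-equation argument and is correct: from $\dc(F) = k(F)/|F|$, each non-central class has size at least $2$, so $k(F) \le |Z| + \tfrac{1}{2}(|F| - |Z|) = \tfrac{|F|}{2}\bigl(1 + \tfrac{1}{[F:Z]}\bigr)$, and the fact that $F/Z$ cannot be cyclic for non-abelian $F$ forces $[F:Z] \ge 4$, giving the bound $5/8$. One small note: the middle paragraph splitting into cases $[F:Z]=4$ and $[F:Z]\ge 5$ is superseded by your final uniform estimate and could simply be cut; the case analysis is not wrong, but it adds nothing once the clean inequality is written down. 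Your observation that equality is achieved by $Q_8$ and $D_4$ is correct and a nice sanity check.
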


\subsection{Groups with unique roots}
Several results in this paper rely on the `unique roots' property.
We say a group $G$ has \emph{unique roots} if $g^n=h^n$ for $g,h \in G$ and $n\ne 0$ implies $g=h$; such a group is also called an \emph{$R$-group} in the literature. Any torsion-free group with cyclic centralizers of non-trivial elements has the unique root property, so torsion-free hyperbolic groups have unique roots. On the other side of the spectrum, torsion-free abelian groups have unique roots as well. More generally, bi-orderable groups have unique roots.

\begin{lem}\label{lem:FixedPointInfiniteIndexTFAbelianGroup}
Let $G$ be a group with unique roots. Then for any endomorphism $\phi \ne \Id$ of $G$, the subgroup $\Fix(\phi)$ has infinite index in $G$.
\end{lem}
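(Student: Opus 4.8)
The plan is to argue by contradiction: suppose $\Fix(\phi)$ has finite index $k$ in $G$, and derive that $\phi = \Id$. The key idea is that finite index forces a uniform bound on how far each group element can be moved by $\phi$ in a "root-theoretic" sense, and the unique roots property then collapses this to the trivial displacement.

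First I would observe that, since $[G : \Fix(\phi)] = k < \infty$, for every $g \in G$ there is some exponent $n$ with $1 \le n \le k$ such that $g^{n} \in \Fix(\phi)$ (take $n$ to be the order of the coset $g\Fix(\phi)$ in the coset space, or more carefully the least $n$ with $g^n \in \Fix(\phi)$, which exists because among $g, g^2, \dots, g^{k+1}$ two lie in the same coset, hence a power of $g$ of exponent at most $k$ lies in $\Fix(\phi)$). Then $\phi(g^{n}) = g^{n}$, i.e. $\phi(g)^{n} = g^{n}$ since $\phi$ is a homomorphism. By the unique roots property (applied with the nonzero integer $n$), this gives $\phi(g) = g$. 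As $g \in G$ was arbitrary, $\phi = \Id_{G}$, contradicting the hypothesis $\phi \ne \Id$. Hence $\Fix(\phi)$ has infinite index.

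I do not anticipate a serious obstacle here; the only point requiring a little care is making sure that for each $g$ one genuinely obtains a \emph{nonzero} power of $g$ landing in $\Fix(\phi)$, so that unique roots applies — this is where finiteness of the index is used, via the pigeonhole argument on cosets. One should also note that the argument only uses that $\phi$ is an endomorphism (not an automorphism), which matches the statement, and that no torsion-freeness is needed beyond what unique roots already provides. If one prefers to avoid the pigeonhole phrasing, an alternative is: the map $G / \Fix(\phi) \to G / \Fix(\phi)$ has finite domain, so every element has finite order there, which is exactly the existence of such an $n \le k$.
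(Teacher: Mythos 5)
Your proof is correct and takes essentially the same route as the paper's: prove the contrapositive, observe that finite index of $\Fix(\phi)$ gives, for each $g$, a nonzero $n$ with $g^n \in \Fix(\phi)$, apply $\phi$ to get $\phi(g)^n = g^n$, and conclude $\phi(g) = g$ by unique roots. The only difference is that you spell out the pigeonhole argument for the existence of such an $n$ (which the paper simply asserts); one small caution is that your phrasing ``order of the coset $g\Fix(\phi)$'' presumes $G/\Fix(\phi)$ is a group, which it need not be, but your pigeonhole version avoids this and is what actually carries the argument.
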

\begin{proof}
	We prove the contrapositive: if \(\Fix(\phi)\) has finite index in \(G\), then \(\phi = \Id\).
	Let \(g \in G\) be arbitrary. Since \(\Fix(\phi)\) has finite index in \(G\), there is a \(k \ne 0\) in \(\Z\) such that \(g^{k} \in \Fix(\phi)\). Hence, \(\phi(g)^{k} = g^{k}\). Since roots are unique in $G$, it follows that \(\phi(g) = g\). As \(g\) was arbitrary, we conclude that \(\phi = \Id\).
\end{proof}

\subsection{Growth functions and growth rates}
Throughout the paper, \(f(n) \sim g(n)\) will signify \(f(n)/g(n) \rightarrow 1\) as \(n \rightarrow \infty\).
For a finitely generated group \(G\) with generating set \(X\), the \emph{exponential growth rate} of \(G\) with respect to \(X\) is
\begin{equation}\label{growth_rate}
\gamma_X(G): = \limsup_{n\rightarrow\infty} \sqrt[n]{ |\B_{G,X}(n)|}=\lim_{n\rightarrow\infty} \sqrt[n]{ |\B_{G,X}(n)|},
\end{equation}
where the last equality is a classical result.
\begin{defn}
	A finitely generated group \(G\) with generating set \(X\) is said to have \emph{exponential growth} if \(\gamma_X(G) > 1\) and \emph{subexponential growth} if \(\gamma_X(G) = 1\).
	This dichotomy does not depend on the generating set \(X\).
\end{defn}
For any \(\epsilon > 0\), if \(\gamma = \gamma_X(G)>1\), then for sufficiently large \(n\), 
\begin{equation}	\label{eq_approx}
	\gamma^n \le |\B_{G,X}(n)| \le (\gamma + \epsilon)^n,
\end{equation}
and we write $|\B_{G,X}(n)| \sim \gamma^n.$
If we replace balls with spheres, we get the same limit in \eqref{growth_rate} and inequality in \eqref{eq_approx}.

For a free group $\F$ of finite rank $m\ge 2$, the exponential growth rate is $\gamma_X(\F)=2m-1$ and the growth function of $\F$ is $|\B_{\F,X}(n)| \sim (2m-1)^n.$

\section{Free groups}\label{sec:exp}

In this section we prove, in \cref{thm:free}, that in any finitely generated free group the degree of twisted commutativity $\tdc$ is $0$ for any endomorphism. To prove this, we find upper bounds for the relative growth rates of fixed subgroups (as in (\ref{def:fixed})) in free groups. This will be done with the help of \cref{prop:free}.

Let $G$ be a group with finite generating set $X$. For a subgroup $H \le G$, we define the \emph{relative growth function} of $H$ in $G$ to be
\[
	\gamma_{H,X}(n)\coloneq|H \cap \B_{G,X}(n)|
\]
for any $n\ge 0$. 
The \emph{relative exponential growth rate} of $H$ in \(G\) is then  
\begin{equation}	\label{sbgp_growth_rate}
	\gamma_X(H): = \limsup_{n\rightarrow\infty} \sqrt[n]{\gamma_{H,X}(n)},
\end{equation}
and is known to be a limit if $G$ is a free group (\cite[Theorem 1.5]{Ol17}).
Furthermore, if $H$ has finite index in $G$, then  $\gamma_X(H) = \gamma_X(G).$

\begin{remark}
For any infinite index subgroup $H$ of a free group $G$ (or more general groups $G$, see for example \cite[Theorem 1.1]{DFW19}), it is known that $\gamma_X(H)<\gamma_X(G)$. However, there exist sequences of infinite index subgroups for which the limit of their relative growth rates is equal to $\gamma_X(G)$ (\cite[Theorem 9.4]{DFW19}).

To obtain our main result, we need to ensure that there is a universal gap between the growth rates of the fixed subgroups and the growth rate of the free group, which we show in 
\cref{prop:free}.
\end{remark}

\begin{prop}	\label{prop:free}
	Let $\F=\F(X)$ be a free group of finite rank $m\ge 2$, and let $k\ge 1$ be a fixed integer.
	Then the following inequality holds:
	\[
		\max_{\rank(K)\le k,  \ K  \underset{i.i}{\le}   \F}\gamma_X(K)< \gamma_X(\F)=2m-1,
	\]
	where \(K \underset{i.i}{\le} \F\) means that \(K\) has infinite index in \(\F\).

	That is, there exists a constant $\gamma_k<2m-1$ such that any subgroup $K \le \F$ of infinite index and rank $\le k$ satisfies $\gamma_X(K) \le \gamma_k$.
\end{prop}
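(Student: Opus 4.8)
The plan is to reduce the statement to a finiteness phenomenon: there are only finitely many "shapes" of infinite-index subgroups of rank at most $k$ that matter for the growth rate, and each one is strictly slower than the ambient free group. First I would recall (or reprove) that the relative growth rate $\gamma_X(K)$ of a finitely generated subgroup $K\le\F$ depends only on the core graph (Stallings graph) $\Gamma_K$ of $K$ with respect to the basis $X$: indeed $\gamma_{K,X}(n)$ counts reduced words in $\B_{\F,X}(n)$ that label loops at the basepoint of $\Gamma_K$, so $\gamma_X(K)$ is governed by the dominant eigenvalue of a suitable transfer/adjacency matrix on $\Gamma_K$ (or one can invoke \cite[Theorem~1.1]{DFW19}, which gives $\gamma_X(K)<\gamma_X(\F)=2m-1$ whenever $[\F:K]=\infty$). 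So for each fixed $\Gamma$, $\gamma_X(K)<2m-1$; the issue is uniformity over the infinitely many possible $\Gamma$.

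The key step is to bound the size of the core graph needed. A subgroup $K$ of rank $\le k$ has core graph $\Gamma_K$ with first Betti number $\le k$; after removing the (finitely many, say $\le k$ up to the base vertex) vertices of degree $\ne 2$, $\Gamma_K$ is a union of arcs, so its homeomorphism type is one of finitely many, but the arc lengths are unbounded — that's the source of infinitely many graphs. However, long subdivided arcs only slow growth down: lengthening a pendant arc or an arc in the graph cannot increase the dominant eigenvalue of the adjacency operator, and in fact one can truncate. Concretely, I would argue that for any infinite-index $K$ of rank $\le k$ there is an infinite-index subgroup $K'$ of rank $\le k$ with $\gamma_X(K)\le\gamma_X(K')$ whose core graph has at most some explicit bound $N=N(k,m)$ edges (e.g. subdivide every arc down to length $\le 2$, or cap arc lengths, preserving the property that at least one vertex has degree $<2m$ so the index stays infinite). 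Since there are only finitely many graphs over the alphabet $X^{\pm}$ with at most $N$ edges, and each gives a growth rate strictly below $2m-1$, the maximum over this finite set is some $\gamma_k<2m-1$, and this $\gamma_k$ works for all $K$.

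I expect the main obstacle to be making the truncation/monotonicity argument rigorous: one must verify both that shortening arcs does not decrease $\gamma_X$ (a Perron--Frobenius monotonicity statement for the relevant adjacency matrices, which needs a little care because collapsing an arc to a point is not simply deleting an edge — one may prefer to subdivide all arcs to a common small length rather than collapse, keeping the graph folded and infinite-index) and that it preserves the infinite-index condition (equivalently, that the core graph remains non-regular, i.e. some vertex is missing an incident edge-label). An alternative, cleaner route that sidesteps explicit eigenvalue bookkeeping: combine the qualitative gap $\gamma_X(K)<2m-1$ from \cite[Theorem~1.1]{DFW19} with a compactness argument — if no uniform $\gamma_k$ existed, take $K_i$ with $\gamma_X(K_i)\to 2m-1$; pass to a subsequence so the pointed core graphs $\Gamma_{K_i}$ converge in the Gromov--Hausdorff (Benjamini--Schramm) sense to a pointed graph $\Gamma_\infty$ of Betti number $\le k$; show $\gamma_X$ is upper semicontinuous under this convergence, so the limit graph has growth rate $2m-1$, forcing it to be $2m$-regular, hence $\Gamma_\infty$ is the full rose and already $\Gamma_{K_i}$ is the full rose for large $i$, contradicting infinite index. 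Either way the crux is the same: control growth rate by a finite amount of combinatorial data and rule out escape to the full rose.
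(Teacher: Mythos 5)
Your Route A is essentially the paper's argument, and you have identified all the right ingredients: core (Stallings) graphs, the identification of $\gamma_X(K)$ with the spectral radius of the core graph, the finiteness of homeomorphism types of graphs of Betti number $\le k$, and the principle that lengthening arcs only slows growth. The paper packages exactly this by introducing \emph{topological graphs} (finite graphs with all vertex degrees $\ge 3$ and free fundamental group of rank $k$; there are finitely many of these up to isomorphism for each $k$), observing that the unlabelled core graph $u(C_K)$ is obtained from some topological graph $\mathcal{T}_K \in Top(k)$ by repeated edge subdivisions, and then invoking the Hoffman--Smith Subdivision Theorem, which says that subdividing an edge on an internal path strictly decreases the spectral radius. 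The two-case analysis is: either (a) $\mathcal{T}_K$ has a vertex of degree $< 2m$, whence $\rho(\mathcal{T}_K) < 2m-1$ already and further subdivisions only help; or (b) $\mathcal{T}_K$ is $2m$-regular, in which case infinite index forces at least one subdivision step, and the very first subdivision yields $\rho(C_K) \le \rho(s_{e_1}(\mathcal{T}_K)) < \rho(\mathcal{T}_K) = 2m-1$. The finite reference set $\mathcal{G}_k$ is therefore: all non-$2m$-regular topological graphs plus all \emph{single} subdivisions of $2m$-regular ones. This neatly sidesteps the dual monotonicity problem you flag (``does collapsing arcs increase spectral radius?'' -- which also risks unfolding the labelled graph): you never need to truncate, because subdividing strictly decreases the radius and one step already creates the gap. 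Your Route B (Benjamini--Schramm compactness plus upper semicontinuity of $\gamma_X$) is a genuinely different idea, but it is the sketchier of the two: the semicontinuity claim needs real justification, and the limit object when arc lengths escape to infinity is not obviously a finite graph of Betti number $\le k$ (the finite topological core can disperse to infinity in the pointed limit, leaving a tree, and the growth rate of the tree is the wrong comparison). If you want a clean proof, Route A with the Hoffman--Smith theorem and the topological-graph reformulation is the way to go.
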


To prove the proposition we use `core' or `Stalling graphs' for finitely generated subgroups of free groups \cite[Prop. 3.8]{KM02}, which we recall below.

Let \(\F = \F(X)\) be as above, free of rank \(m \ge 1\) with basis \(X\), and let $H$ be a finitely generated subgroup.
The \emph{core graph} of \(H\), denoted \(C_{H}\), is a finite, connected, graph with the following properties:
\begin{enumerate}[(i)]
	\item It is \(X\)-labelled and oriented. That is, any oriented edge $(v_1,v_2)$ has some label $a \in X$, which
	 means that the edge $(v_2, v_1)$ can be crossed and has label $a^{-1}$, but $(v_2, v_1)$ and $a^{-1}$ are not explicitly drawn.
	\item It is \emph{folded}: that is, no two edges incident to a vertex have the same \(X\)-label.
	\item It contains a distinguished vertex \(\circ\).
	\item The labels of all the closed paths (or loops) at \(\circ\) give exactly \(H\), and the graph \(C_{H}\) is minimal with this property.
\end{enumerate}
All vertices in $C_H$ have degree $\ge 2$ (the degree is the number of all edges incident to a vertex, whether incoming or outgoing), except possibly $\circ$;
and every vertex is the initial vertex (and the terminal vertex) of at most $m$ edges, labelled by pairwise different letters in $X$. In \cref{two_graphs}, the graph $C_H$ on the right is the core graph of the subgroup $H=\langle a, b^2a\rangle$ of the free group on $X=\{a,b\}.$

\begin{figure}[h] 
	\begin{center}
		\includegraphics[scale=0.75]{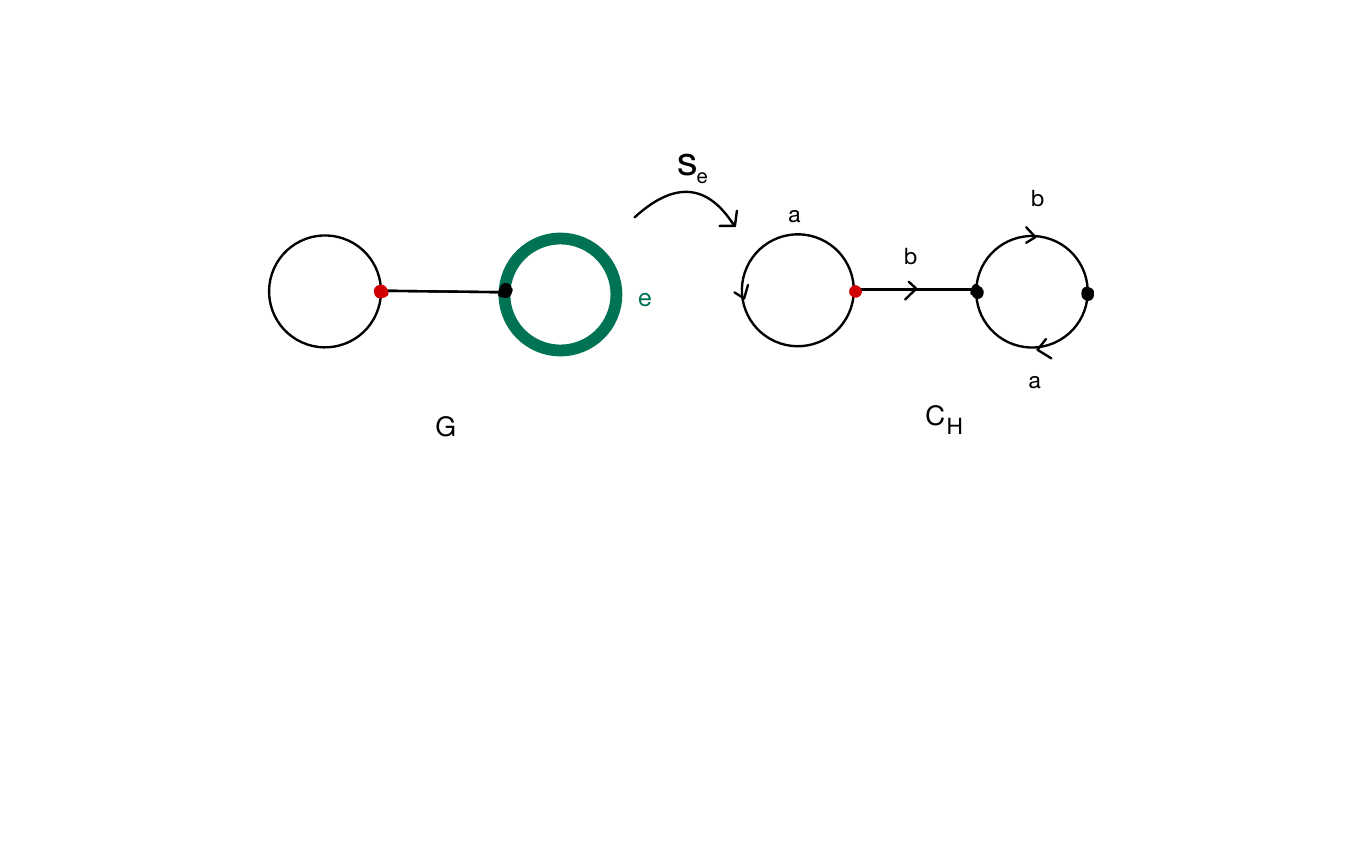}
		\caption{Topological graph $G$ gives $u(C_H)$ (with $\circ$ in red) by the subdivision of $e$}
	\label{two_graphs}
	\end{center}
\end{figure}

The following are standard facts about core graphs:

\begin{lem}[{\cite[Prop.~8.3]{KM02}}]	\label{lem_deg}
	Let $H$ be a finitely generated subgroup of a free group $\F$ of rank $m>1$, and let $C_H$ be its core graph. 
	Then $H$ has finite index if and only if $C_H$ has all vertex degrees equal to $2m$.
	In particular, if $C_H$ has a vertex of degree $2$, then $H$ has infinite index in $\F$. 
\end{lem}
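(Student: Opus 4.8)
The plan is to pass from the combinatorics of the core graph $C_{H}$ to covering space theory of the $m$-petal rose $R_{m}$ (the graph with one vertex and $m$ loops labelled by $X$, whose fundamental group is identified with $\F$). The key observation is that a connected, folded, based $X$-labelled graph $\Gamma$ is a covering graph of $R_{m}$ exactly when every vertex of $\Gamma$ has degree $2m$: foldedness already gives at most one edge of each label leaving, and at most one entering, each vertex, and the degree-$2m$ condition upgrades both of these to ``exactly one'', which is precisely the local bijectivity required for the label-reading map $\Gamma \to R_{m}$ to be a covering. For such a covering the number of sheets equals $\size{V(\Gamma)}$ (since $R_{m}$ has a single vertex) and also equals the index in $\F$ of the subgroup $\pi_{1}^{\mathrm{label}}(\Gamma, \circ)$ that the covering represents.

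First I would prove the forward implication. Suppose $[\F : H] = n < \infty$ and let $\Gamma_{H}$ be the Schreier coset graph of $H$ with respect to $X$: its vertices are the right cosets, with an $x$-edge from $Hc$ to $Hcx$ for each $x \in X$, and the trivial coset as basepoint. Since right multiplication by each $x \in X$ permutes the $n$ cosets, $\Gamma_{H}$ is finite, connected, folded, has every vertex of degree exactly $2m$, and reading labels of loops at the basepoint recovers $H$. By the standard description of the core graph as the result of iteratively deleting degree-one vertices (other than $\circ$) from $\Gamma_{H}$ (see \cite{KM02}), and because $2m \ge 4 > 1$ forbids any degree-one vertex, no pruning occurs; hence $C_{H} = \Gamma_{H}$ and all its vertex degrees equal $2m$. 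If one prefers to avoid the pruning description, one can instead observe that any proper connected based subgraph of $\Gamma_{H}$ has strictly smaller first Betti number (using that $\Gamma_{H}$ is $2m$-regular), while $\pi_{1}^{\mathrm{label}}$ of any such subgraph would have to have the same rank as $H$, which is finite by Nielsen--Schreier; so $\Gamma_{H}$ is already minimal and equals $C_{H}$.

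Next, the converse: assume every vertex of $C_{H}$ has degree $2m$. By the key observation above, the label-reading map $C_{H} \to R_{m}$ is a covering; it is connected and based, so it represents the subgroup $\pi_{1}^{\mathrm{label}}(C_{H}, \circ)$, which is $H$ by the defining property of the core graph. Therefore $[\F : H]$ equals the number of sheets, which is $\size{V(C_{H})} < \infty$. The ``in particular'' clause is then immediate: since $m > 1$ we have $2m \ge 4 > 2$, so a vertex of degree $2$ certifies that not all vertex degrees equal $2m$, whence by the equivalence just established $H$ does not have finite index, \ie it has infinite index.

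I expect the only real subtlety to be justifying the identification $C_{H} = \Gamma_{H}$ in the finite-index case, i.e.\ checking that a finite, based, folded graph with no degree-one non-basepoint vertex is already its own core rather than admitting a strictly smaller folded graph with the same label-subgroup. This is exactly where either the cited core-graph construction of \cite{KM02} or the first-Betti-number/Nielsen--Schreier rank count does the work; everything else is a routine translation between Stallings graphs and coverings of the rose.
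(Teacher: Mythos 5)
Your proof is correct. The paper does not prove this lemma at all — it is quoted directly from Kapovich--Myasnikov \cite{KM02} — and your argument (identifying all-degrees-$2m$ folded graphs with coverings of the $m$-petal rose, plus the Schreier-graph/pruning identification for the forward direction) is essentially the standard proof of that cited proposition, with the one genuine subtlety ($C_H=\Gamma_H$ in the finite-index case) correctly identified and adequately handled by your Betti-number count.
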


We will also need the unlabelled and undirected version of $C_H$, denoted by $u(C_H)$, and some general facts about spectral radii from graph theory.

\begin{defn}
	The \emph{spectral radius} (or \emph{entropy}) of a graph $\Gamma$ is denoted by $\rho(\Gamma)$ and is the growth rate of the set of (unlabelled) closed paths at some basepoint $\circ$ in $\Gamma$. 
	That is, $\rho(\Gamma)=\lim_{n\rightarrow\infty} \sqrt[n]{\gamma_\circ(n)}$, where $\gamma_\circ(n)$ is the number of closed paths of length $n$ at $\circ$.
\end{defn}

It is well known that $\rho(\Gamma)$ is the spectral radius, or largest eigenvalue, of the adjacency matrix of $\Gamma$.
When a directed, labelled, graph is deterministic (from every vertex there is at most one edge labelled by some generator $x \in X$) and ergodic (one can reach any vertex from any other vertex by a directed path), its entropy is equal to the growth rate for the set of words labelling the loops at some base point. Specifically, since the core graph $C_H$ of a subgroup $H$ of $\F$ is, by definition, deterministic and ergodic, the growth rate $\gamma_X(H)$ of $H$ is equal to the entropy of $C_H$ (see for example \cite{Ol17}), that is, 
\begin{equation}	\label{eq_rates}
	\gamma_X(H)=\rho(C_H).
\end{equation}

In other words, counting elements in a subgroup $H$ amounts to counting loops (without backtracking) at the vertex $\circ$ in its core graph $C_H$.
For further use, we also record the fact that if a finite connected graph $\Gamma$ has all vertex degrees $\le D$, then
\begin{equation}	\label{max_rates}
	\rho(\Gamma)\le D-1,
\end{equation}
with equality only when all degrees are equal to $D$.

To quantitatively study the relative growth rates of subgroups via their core graphs, we need to introduce one more type of graphs, which we call \emph{topological graphs}. 
\begin{defn}\label{def:topgraph}
	A {\it topological graph} of rank $k\ge 2$ is a finite, unlabelled graph with a distinguished vertex $\circ$ in which all vertices have degree $\ge 3$ 
	and whose fundamental group is the free group of rank $k$.
\end{defn}

\begin{figure}[h]
	\begin{center}
		\includegraphics[scale=0.12]{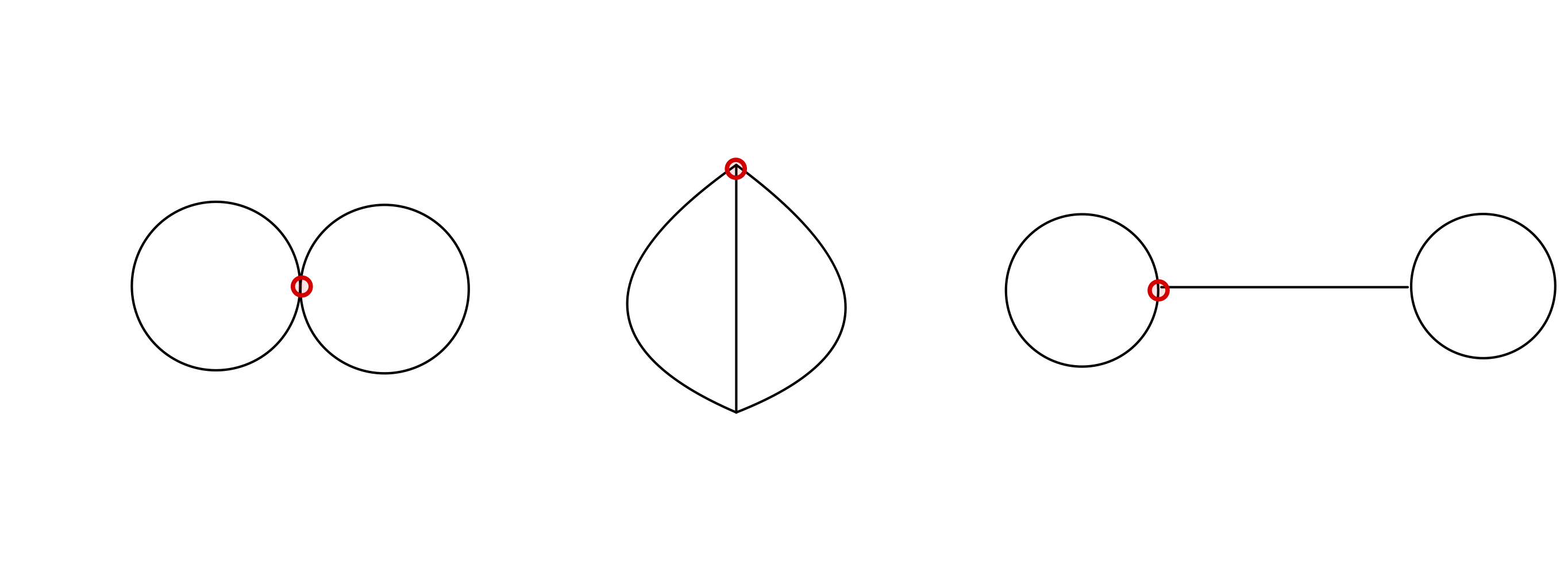}
		\caption{The set $Top(2)$ consists of $3$ graphs}\label{three_graphs}
	\end{center} 
\end{figure}

\begin{rem}\label{rem:top_graph_v2} We list some properties of topological graphs and their relations with the (core graphs of) subgroups of free groups.
\begin{enumerate}[(i)]
	\item Let $Top(k)$ be the set of all non-isomorphic topological graphs of rank $k$, for some $k\ge 1$.  
	Then the set $Top(k)$ is finite (see \cref{three_graphs} for $Top(2)$). 
\end{enumerate}
Next, let $\F$ be a free group of rank $m>1$. Let \(H\) be a finitely generated subgroup of \(\F\).
Suppose the core graph $C_H$ has no vertex of degree $1$.
\begin{enumerate}[(i)] \label{core-graph}
	\setcounter{enumi}{1}
	\item	We can associate a topological graph $\mathcal{T}_H$ to the subgroup $H$, obtained from the unlabelled $u(C_H)$, by removing all vertices of $u(C_H)$ of degree 2 (except $\circ$) and connecting the edges incident to those vertices to make `longer' edges. Then $\pi_1(\mathcal{T}_H) \cong H$.
	\item Conversely, if \(H\) has rank \(k\), we can obtain $u(C_H)$ by (repeatedly) subdividing the edges of one of the graphs in $Top(k)$ (see \cref{two_graphs}).
	\item Let $\Gamma$ be a finite, connected, graph with some base vertex $\circ$ such that no vertex degree is $>2m$ or $=1$, the latter except for the vertex $\circ$. Then there exists a finitely generated subgroup $K$ of $\F$ such that $\Gamma=C_K$ by assigning edge labels and directions appropriately (see \cite{J02}). 
\end{enumerate}
\end{rem}


We will use a classical result on spectral radii of graphs called Hoffman and Smith's Subdivision Theorem. An \emph{internal path} of a graph is a sequence of adjacent vertices, all with vertex degrees $2$, except for the two end-vertices; the end-vertices each have degree strictly greater than $2$, and may possibly be the same vertex.

For any graph $\Gamma$, we write $s_e(\Gamma)$ for the graph obtained by subdividing some (single) edge $e$ of $\Gamma$ that lies on an internal path, as in \cref{two_graphs}.

\begin{thm}[{\cite{HS74}}]		\label{thm:subdiv}
	For a graph $\Gamma$ with an internal path $\nu$, if $s_e(\Gamma)$ is the graph obtained by subdividing an edge $e$ on $\nu$, then the spectral radii satisfy $\rho(s_e(\Gamma))<\rho(\Gamma)$.
\end{thm}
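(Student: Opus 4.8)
The plan is to derive the strict inequality from a Perron--Frobenius eigenvector comparison that collapses everything to a second--order linear recurrence along the internal path; the characteristic--polynomial incarnation of the same idea (essentially the argument of \cite{HS74}) is the fallback for the one delicate case. I would begin with two reductions. Passing to the component of $\Gamma$ containing $\nu$ affects neither $\rho(\Gamma)$ nor $\rho(s_e(\Gamma))$, so assume $\Gamma$ is connected; and since a connected graph carrying an internal path satisfies $\rho\ge 2$, with equality exactly for the affine diagrams $\widetilde D_n$ (for which subdividing an internal edge gives $\widetilde D_{n+1}$, still of spectral radius $2$), I reduce to the case $\rho(\Gamma)>2$ — the only case in which the asserted strict inequality can hold, and the only one needed here, where the relevant core graphs have rank $\ge 2$ and hence $\rho>2$. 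Note also that, up to isomorphism, $s_e(\Gamma)$ does not depend on which edge of $\nu$ is subdivided.

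Write the internal path of $\Gamma$ as $v_0,v_1,\dots,v_\ell$ (so $v_1,\dots,v_{\ell-1}$ have degree $2$ and $v_0,v_\ell$ have degree $\ge 3$), and write the length-$(\ell+1)$ path of $s_e(\Gamma)$ as $t_0,\dots,t_{\ell+1}$. Put $\rho'=\rho(s_e(\Gamma))$ (so $\rho'>2$ as well) and let $y>0$ be a Perron eigenvector of $s_e(\Gamma)$. The eigenvector equation gives $y_{t_{i-1}}+y_{t_{i+1}}=\rho'y_{t_i}$ for $1\le i\le\ell$, and, because $v_0$ has at least two neighbours off the path, the number $y_{-1}=\rho'y_{t_0}-y_{t_1}$ is a sum of positive eigenvector entries, hence $y_{-1}>0$; likewise $y_{\ell+2}=\rho'y_{t_{\ell+1}}-y_{t_\ell}>0$.

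The crux is a \emph{compression}: suppose the finite sequence $(y_{t_i})_{0\le i\le\ell+1}$ has an interior local minimum at an index $j$ with $1\le j\le\ell$, i.e.\ $y_{t_{j-1}}\ge y_{t_j}\le y_{t_{j+1}}$. Deleting $t_j$ and joining its two path-neighbours by an edge produces a graph $\Gamma^{*}\cong\Gamma$; transplanting the $y$-values (minus the entry at $t_j$) gives a positive vector $z$ on $\Gamma^{*}$. A direct verification — the eigenvector equation at the untouched vertices, the identities $\sum_{u\sim v_0,\,u\ne t_1}y_u=y_{-1}$ and its mirror at $v_\ell$, and the two defining inequalities of the local minimum, invoked at the two vertices flanking the deleted one — shows $A(\Gamma^{*})z\ge\rho'z$ entrywise, with a strict inequality at one of those flanking vertices. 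This is exactly where $\rho'>2$ is used: it forbids three consecutive equal entries, so the local minimum is genuine. Since $\Gamma^{*}$ is connected, $A(\Gamma^{*})$ is irreducible, and the standard Perron--Frobenius fact that a positive vector $z$ with $A(\Gamma^{*})z\ge\rho'z$ and strict inequality somewhere forces $\rho(\Gamma^{*})>\rho'$ gives $\rho(\Gamma)=\rho(\Gamma^{*})>\rho'$, as wanted.

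The main obstacle is the complementary case, in which the Perron eigenvector is monotone along the path and no interior local minimum exists; this really does occur (for instance when one end of $\nu$ sits next to a dense subgraph and the other is ``light''). Here I would either replace deletion by an insertion-with-rescaling of a path vertex near the heavier end — which works precisely when some consecutive pair of path values has ratio at most $\rho-1$ — or, more reliably, turn to characteristic polynomials. Writing $G_k$ for $\Gamma$ with its internal path replaced by a path of length $k$ (so $\Gamma=G_p$ and $s_e(\Gamma)=G_{p+1}$), the edge-addition and pendant-path formulas for $\chi(G,\lambda)=\det(\lambda I-A(G))$ yield $\chi(G_k,\lambda)+2\Sigma(\lambda)=\psi_k(\lambda)$ with $\psi_{k+1}=\lambda\psi_k-\psi_{k-1}$, where $\Sigma$ is a fixed sum of characteristic polynomials of induced subgraphs of $\Gamma$, hence $\ge 0$ for $\lambda\ge\rho(\Gamma)$. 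For $\lambda>2$ one has $\psi_k=C_+(\lambda)\mu_+^k+C_-(\lambda)\mu_-^k$ with $\mu_\pm=\tfrac12(\lambda\pm\sqrt{\lambda^2-4})$; evaluating the resulting identity $\chi(G_{p+1},\lambda)=\lambda\chi(G_p,\lambda)-\chi(G_{p-1},\lambda)+2(\lambda-2)\Sigma(\lambda)$ at $\lambda=\rho=\rho(G_p)$ and using $\chi(G_p,\rho)=0$, $\Sigma(\rho)\ge 0$, $\rho-2>0$, together with a downward induction on path length to pin down the sign of $\chi(G_{p-1},\rho)$, should force $\chi(G_{p+1},\lambda)>0$ for all $\lambda\ge\rho$, i.e.\ $\rho(G_{p+1})<\rho$. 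Making this induction airtight — its base case, and the degenerate possibility $\Sigma\equiv 0$ — is where the residual work lies.
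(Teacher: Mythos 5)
First, a point of comparison: the paper does not prove this statement at all --- it is quoted from Hoffman and Smith \cite{HS74} and used as a black box in the proof of \cref{prop:free} --- so there is no internal proof to measure your argument against; you are reproving a classical theorem from scratch. Your preliminary observations are sound, and in one respect sharper than the paper's own formulation: the strict inequality as literally stated fails for $\Gamma=\widetilde{D}_n$ (where subdivision preserves $\rho=2$), so your reduction to $\rho(\Gamma)>2$ is genuinely necessary, and it is harmless for the application since the graphs arising in \cref{prop:free} have cycle rank at least $2$ and hence $\rho>2$. Your Case 1 is correct: if the Perron vector $y$ of $s_e(\Gamma)$ has a weak interior local minimum at $t_j$ along the path, deleting $t_j$ and transplanting $y$ yields $A(\Gamma^{*})z\ge\rho'z$ with equality failing at $t_{j-1}$ or $t_{j+1}$ (three equal consecutive values would force $\rho'=2$), and irreducibility then gives $\rho(\Gamma)>\rho'$.

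The gap is Case 2, and it is genuine. Writing the path values as $y_{t_i}=A\mu^{i}+B\mu^{-i}$ with $\mu=\tfrac{1}{2}\left(\rho'+\sqrt{\rho'^{2}-4}\right)>1$, the boundary conditions $y_{-1}>0$ and $y_{\ell+2}>0$ are perfectly compatible with $AB\le 0$, i.e.\ with a strictly monotone profile along the path (attach one end of $\nu$ to a large clique and the other to a triangle); so the compression step really does not cover all configurations, and no naive variant rescues it --- deleting the lightest interior vertex in the monotone case produces a strict \emph{reverse} inequality at one of its neighbours. Your fallback via characteristic polynomials is only a sketch: the existence of a polynomial $\Sigma$ with $\chi(G_k)=\psi_k-2\Sigma$, its nonnegativity on $[\rho(\Gamma),\infty)$, the sign of $\chi(G_{p-1},\rho)$, and the base case of the downward induction are all asserted rather than proved, and these are precisely the substantive content of Hoffman and Smith's argument (you acknowledge as much). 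As written, the proposal establishes the theorem only under the additional hypothesis that the Perron vector of $s_e(\Gamma)$ is not monotone along the subdivided path; to be complete it must either carry out the polynomial computation in full or simply cite \cite{HS74}, as the paper does.
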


\begin{proof}[Proof of \cref{prop:free}]
Let $H$ be a subgroup of rank $k$ with core graph $C_H$. We choose $H$ so that the distinguished vertex of $C_H$ is not of degree $1$ by picking a conjugate of $H$, if necessary; this will not affect the growth rate.

By \cref{rem:top_graph_v2} (iii), $u(C_H)$ can be obtained by (repeatedly) subdividing edges in some graph $\mathcal{T}_H \in Top(k)$, and with each subdivision the entropy strictly decreases by \cref{thm:subdiv}. Since $H$ has infinite rank, $C_H$ has some vertex degrees different from $2m$ by \cref{lem_deg}. Hence, either
\begin{itemize}
\item[(a)] $\mathcal{T}_H$ has some vertex degrees different from $2m$, or
\item[(b)] all vertex degrees in $\mathcal{T}_H$ are equal to $2m$, and 
$u(C_H)$ is equal to $s_{e_p}(\cdots (s_{e_1}(\mathcal{T}_H)))$, where the $e_i$, $1\le i \le p$, are edges on some internal paths; that is, the unlabelled core graph of $H$ is obtained from $\mathcal{T}_H$ by a non-empty sequence of subdivisions.
\end{itemize}
 In case (a), $\gamma_X(H)=\rho(C_H)\le \rho(\mathcal{T}_H)< 2m-1=\gamma_X(\F)$; the first inequality is an equality if $u(C_H)=\mathcal{T}_H$ and is a strict inequality otherwise, by the subdivision operation effect on the entropy (\cref{thm:subdiv}). The second inequality holds by \eqref{max_rates}.

 In case (b), we get the inequalities $\gamma_X(H)=\rho(C_H)\le \rho(s_{e_1}(\mathcal{T}_H))<\rho(\mathcal{T}_H)= 2m-1$ from \cref{thm:subdiv}; the last equality holds by \eqref{max_rates}.

Let $\mathcal{G}_k$ be the set of finite graphs consisting of the topological graphs in (a) and those obtained by a single edge-subdivision of a topological graph as in (b); for the latter, we mean the graphs of the form $s_e(\mathcal{T})$ for some $\mathcal{T} \in Top(k)$ and $e\in E(\mathcal{T})$, where all vertex degrees in $\mathcal{T}$ are $2m$. The set $\mathcal{G}_k$ is finite because by \cref{rem:top_graph_v2}(i) the set $Top(k)$ is finite, and for each graph in $Top(k)$ there are only finitely many edges to subdivide in a single edge-subdivision. Then the maxima among growth rates of 
subgroups and of entropies of graphs exist and satisfy
\[
	\max_{\rank(K)=k, \ K \underset{i.i}{\le} \F} \gamma_X(K) = \max_{\Gamma\in \mathcal{G}_k} \rho(\Gamma) <2m-1.
\]
The equality holds because the maximum relative growth rate of a subgroup of rank $k$ is achieved for one of the subgroups with core graph stemming from $\mathcal{G}_k$ by Remark \ref{core-graph}(iv), and the inequality since $\rho(\Gamma)<2m-1$ for each of the finitely many $\Gamma \in \mathcal{G}_k$. We get the result by taking the union of all subgroups of rank $j\le k$ and applying the above for each rank.
\end{proof}

\begin{thm}	\label{thm:free}
	Let $\F=\F(X)$ be a free group of finite rank $m\ge 2$ with basis $X$, and let $\phi \in \End(\F)$. Then $\tdc_X(\phi,\F)=0$.
\end{thm}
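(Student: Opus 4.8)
The plan is to use the expansion of $\tdc_X(\phi,\F)$ in terms of fixed subgroups given in \eqref{def:tdcsum}, namely
\[
	\tdc_X(\phi,\F)=\limsup_{n\rightarrow\infty}\frac{\sum_{g\in \B_{\F,X}(n)}| \Fix(\inn{g} \circ \phi)\cap \B_{\F,X}(n)|}{|\B_{\F,X}(n)|^2},
\]
and to bound the inner terms $|\Fix(\inn{g}\circ\phi)\cap\B_{\F,X}(n)|$ uniformly. The key input is that by a theorem of Bestvina--Handel (the Scott conjecture), for any $\psi\in\End(\F)$ the fixed subgroup $\Fix(\psi)$ has rank at most $m$ (rank at most $\rank(\F)$ for automorphisms, and the bound still holds for arbitrary endomorphisms by Imrich--Turner). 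Thus for every $g\in\F$, the subgroup $K_g\coloneq\Fix(\inn{g}\circ\phi)$ has rank at most $m$. Two cases arise: either $\inn{g}\circ\phi=\Id_{\F}$, or $K_g$ has infinite index in $\F$ by \cref{lem:FixedPointInfiniteIndexTFAbelianGroup} (free groups have unique roots).

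First I would dispose of the degenerate case. The condition $\inn{g}\circ\phi=\Id_{\F}$ means $\phi=\inn{g^{-1}}$, i.e.\ $\phi$ is an inner automorphism; and for a fixed inner $\phi=\inn{h}$, the set of $g$ with $\inn{g}\circ\phi=\Id$ is the single coset $Z(\F)h^{-1}=\{h^{-1}\}$ (the centre of a non-abelian free group is trivial). So there is at most one `bad' element $g_0$, and its contribution to the numerator is at most $|\B_{\F,X}(n)|$, which is negligible against $|\B_{\F,X}(n)|^2$. For all other $g$, $K_g$ has infinite index and rank $\le m$, so by \cref{prop:free} (applied with $k=m$) there is a constant $\gamma_m<2m-1$, independent of $g$, with $\gamma_X(K_g)\le\gamma_m$. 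This uniform gap is exactly what \cref{prop:free} was designed to provide, and it is the crux of the argument.

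Next I would turn the growth-rate bound into an explicit counting bound. Fix $\eps>0$ small enough that $\gamma_m+\eps<2m-1$. By definition of relative growth rate there is, for each infinite-index $K_g$, some threshold beyond which $|K_g\cap\B_{\F,X}(n)|\le(\gamma_m+\eps)^n$; the mild subtlety is that a priori this threshold could depend on $g$, so I would note that since the core graphs of the relevant subgroups all come from the \emph{finite} family $\mathcal{G}_m$ of \cref{prop:free}, only finitely many distinct growth functions occur, and hence the threshold $n_0$ can be taken uniform in $g$. (Alternatively, since $\gamma_X(K_g)=\rho(C_{K_g})\le\gamma_m$ and the adjacency-matrix bound gives $|K_g\cap\B_{\F,X}(n)|\le C\cdot n^{m}\,\gamma_m^{\,n}$ with $C$ depending only on the finitely many graphs in $\mathcal{G}_m$, one gets a clean uniform polynomial-times-exponential bound.) Either way, for $n\ge n_0$ and every $g\ne g_0$ we have $|K_g\cap\B_{\F,X}(n)|\le(\gamma_m+\eps)^n$.

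Finally I would assemble the estimate. For $n\ge n_0$,
\[
	\sum_{g\in\B_{\F,X}(n)}|\Fix(\inn{g}\circ\phi)\cap\B_{\F,X}(n)|\le |\B_{\F,X}(n)| + |\B_{\F,X}(n)|\cdot(\gamma_m+\eps)^n,
\]
the first term accounting for the at-most-one bad $g_0$. Dividing by $|\B_{\F,X}(n)|^2$ and using $|\B_{\F,X}(n)|\sim(2m-1)^n$ (so $|\B_{\F,X}(n)|\ge c\,(2m-1)^n$ for large $n$), the right-hand side is bounded above by
\[
	\frac{1}{|\B_{\F,X}(n)|} + \frac{(\gamma_m+\eps)^n}{|\B_{\F,X}(n)|} \le \frac{1}{c(2m-1)^n} + \frac{1}{c}\left(\frac{\gamma_m+\eps}{2m-1}\right)^n \xrightarrow{\ n\to\infty\ } 0,
\]
since $\gamma_m+\eps<2m-1$. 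Hence the $\limsup$ is $0$, proving $\tdc_X(\phi,\F)=0$.

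The main obstacle is obtaining the \emph{uniform} exponential bound on $|\Fix(\inn{g}\circ\phi)\cap\B_{\F,X}(n)|$ over all $g\in\F$: a pointwise gap $\gamma_X(K_g)<2m-1$ for each $g$ would not suffice, since the numerator sums over $|\B_{\F,X}(n)|$-many values of $g$ and the gap could a priori shrink to $0$. This is precisely the content of \cref{prop:free} (the universal constant $\gamma_m<2m-1$ valid for all infinite-index subgroups of bounded rank), combined with the uniform rank bound $\rank(\Fix(\psi))\le m$ for endomorphisms $\psi$ of $\F$; once these two facts are in hand, the rest is the routine geometric-series estimate above.
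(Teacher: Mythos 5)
Your proposal follows the paper's proof essentially step for step: expand $\tdc$ via \eqref{def:tdcsum}, invoke Bestvina--Handel (Imrich--Turner for endomorphisms) to bound $\rank(\Fix(\inn{g}\circ\phi))\le m$, apply \cref{prop:free} for a uniform gap $\gamma_m<2m-1$, and finish with the geometric-series estimate. You are in fact slightly more careful than the paper on two points it glosses over: you explicitly dispose of the degenerate case $\inn{g}\circ\phi=\Id_{\F}$ (where $\Fix(\inn{g}\circ\phi)=\F$ has finite index, so \cref{prop:free} does not apply; by triviality of the centre this happens for at most one $g$ and contributes negligibly), and you flag the need for the threshold $n_0$ in the bound $|H_g\cap\B(n)|\le(\gamma_m+\eps)^n$ to be uniform in $g$ — though your justification needs a small repair, since the core graphs themselves are not drawn from the finite set $\mathcal G_m$ but are obtained from its members by further subdivision; the clean argument is that reduced loops in a subdivided graph inject, length-non-increasingly, into reduced loops of the graph being subdivided, so the loop-counting functions of all the relevant core graphs are dominated by those of the finitely many graphs in $\mathcal G_m$ (with their finitely many possible basepoints).
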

\begin{proof}
For a fixed $\phi \in \End(\F)$, we use \eqref{def:tdcsum} as the definition of $\tdc_X(\phi,\F)$, and with that notation, we write $H_g \coloneq \Fix(\inn{g} \circ \phi)$. 
By the Bestvina-Handel Theorem (\cite{BH92}), the rank of any fixed subgroup of an endomorphism of a free group is bounded by the rank of the ambient free group, so $\rank(H_g)\le m$ for all $g \in \F$.

Let $\lambda=\gamma_m$ be as in \cref{prop:free}, so $\lambda < 2m-1$. Then for any $\epsilon>0$ and large enough $n$, the relative growth function of $H$ satisfies $|H \cap \B_{\F,X}(n)| \le (\lambda +\epsilon)^n$. So, 
 in the summation \eqref{def:tdcsum}, we get
 \begin{align*}\label{tdcsum}
\tdc_X(\phi,\F) &= \limsup_{n\rightarrow\infty}\frac{\sum_{g\in \B_{\F,X}(n)}| H_g\cap \B_{\F,X}(n)|}{|\B_{\F,X}(n)|^2}\\
&\le \limsup_{n\rightarrow\infty}\frac{|\B_{\F,X}(n)| (\lambda +\epsilon)^n}{|\B_{\F,X}(n)|^2}\\
&=\limsup_{n\rightarrow\infty}\frac{(\lambda + \epsilon)^n}{|\B_{\F,X}(n)|}.
\end{align*}
We can pick $\epsilon$ so that $\lambda+\epsilon < 2m-1$, and since $|\B_{\F,X}(n)| \ge (2m-1)^n$, we thus get
\[
	\limsup_{n\rightarrow\infty}\frac{(\lambda + \epsilon)^n}{|\B_{\F,X}(n)|} \le \limsup_{n\rightarrow\infty}\frac{(\lambda + \epsilon)^n}{ (2m-1)^n}=0,
\]
which proves the theorem.
\end{proof}

\section{Twisted commutativity and subexponential growth}\label{subexp}

\begin{defn}	\label{stable}
	A finitely generated group \(G\) with generating set \(X\) is said to be of {\em stable subexponential growth} if \(\lim\limits_{n\rightarrow\infty}\frac{|\B_{G,X}(n+1)|}{|\B_{G,X}(n)|}=1\).
\end{defn}
Note that being of stable subexponential growth implies that \(\gamma_X(G) = 1\), and hence that the group has subexponential growth.

\medskip

By the celebrated result of Gromov, every finitely generated group of polynomial growth -- where \(\B_{G, X}(n)\) is bounded above by a polynomial function -- is virtually nilpotent. All these groups are of stable subexponential growth by results of Pansu (see \cite{P83},\cite[Corollary 9]{BD13}, and also \cite{bass}): if \(G\) is a finitely generated, virtually nilpotent group, and \(X\) is any generating set, then, for some exponent \(d\) and constant $c$,
\begin{equation}	\label{eqn:pol_bounds}
	|\B_{G,X}(n)| \sim c n^d.
\end{equation}
The exponent \(d\) is calculated explicitly in \cite{bass}; for a virtually abelian group it is equal to the rank of a finite index free abelian subgroup (unique up to isomorphism).

From \eqref{eqn:pol_bounds}, we get that for any positive integer \(k\), 
\begin{equation}	\label{eqn:ball_ratio}
	\lim_{n \to \infty} \frac{|\B_{G,X}(n+k)|}{|\B_{G,X}(n)|} = 1.
\end{equation}

The main result we require for this class of groups is the following:

\begin{prop}[{\cite[Proposition~2.2]{indexformula}}] \label{pepprop}
	Let \(G\) be a finitely generated group with stable subexponential growth and let \(X\) be a generating set. For every finite index subgroup \(H\le G\) and every \(g \in G\), we have
	\[
		\lim_{n\rightarrow\infty}\frac{|gH\cap\B_{G,X}(n)|}{|\B_{G,X}(n)|}=\lim_{n\rightarrow\infty}\frac{|Hg\cap\B_{G,X}(n)|}{|\B_{G,X}(n)|}=\frac{1}{[G:H]}.
	\]

	Furthermore, if \(H\) is an infinite index subgroup of \(G\), then both limits are zero for any coset of \(H\). 
\end{prop}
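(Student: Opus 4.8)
The plan is to reduce the whole statement to a single elementary \emph{radius-shift} estimate together with the defining ratio property of stable subexponential growth. The first ingredient I would establish is that, for every fixed integer \(k\), \(|\B_{G,X}(n+k)| \sim |\B_{G,X}(n)|\): for \(k>0\) this is the product of the \(k\) consecutive ratios \(|\B_{G,X}(n+j+1)|/|\B_{G,X}(n+j)|\), each of which tends to \(1\) by \cref{stable}, and the case \(k<0\) follows by taking reciprocals. (This is exactly \eqref{eqn:ball_ratio}, now obtained for all groups of stable subexponential growth rather than only the virtually nilpotent ones.)

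The second ingredient is the shift lemma. If \(g\in G\) has word length \(d=|g|_X\) and \(H\le G\) is any subgroup, then left translation by \(g\) sends \(H\cap\B_{G,X}(n)\) into \(gH\cap\B_{G,X}(n+d)\), and left translation by \(g^{-1}\) sends \(gH\cap\B_{G,X}(n)\) into \(H\cap\B_{G,X}(n+d)\). Using that balls are nested, these two inclusions give
\[
	|H\cap\B_{G,X}(n-d)|\ \le\ |gH\cap\B_{G,X}(n)|\ \le\ |H\cap\B_{G,X}(n+d)|,
\]
so the counting function of a coset \(gH\) and that of \(H\) agree up to a bounded shift of the radius.

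For the finite-index case, write \(m=[G:H]\), fix coset representatives \(g_1,\dots,g_m\), and let \(d=\max_i|g_i|_X\). Applying the shift lemma to each \(g_i\) gives \(|H\cap\B_{G,X}(n-d)|\le|g_iH\cap\B_{G,X}(n)|\le|H\cap\B_{G,X}(n+d)|\) for all \(i\); since the cosets \(g_iH\) partition \(\B_{G,X}(n)\), summing over \(i\) yields \(m\,|H\cap\B_{G,X}(n-d)|\le|\B_{G,X}(n)|\le m\,|H\cap\B_{G,X}(n+d)|\). Dividing by \(|\B_{G,X}(n)|\) and applying the first ingredient forces \(\lim_n |H\cap\B_{G,X}(n)|/|\B_{G,X}(n)|=1/m\). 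Substituting this back into the per-coset sandwich and squeezing gives \(\lim_n |g_iH\cap\B_{G,X}(n)|/|\B_{G,X}(n)|=1/m\) for every \(i\), and since any left coset of \(H\) is one of the \(g_iH\), the left-coset claim holds for all \(g\). For right cosets I would use that inversion is an isometry of the Cayley graph which fixes each ball setwise and sends \(Hg\) to \(g^{-1}H\), so \(|Hg\cap\B_{G,X}(n)|=|g^{-1}H\cap\B_{G,X}(n)|\), reducing to the left-coset case.

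For the infinite-index case, fix an arbitrary \(k\ge1\). As \([G:H]=\infty\), there are \(k\) pairwise distinct cosets \(g_1H,\dots,g_kH\) (take \(g_1=1\)), and with \(d=\max_{i\le k}|g_i|_X\) the shift lemma gives \(|H\cap\B_{G,X}(n)|\le|g_iH\cap\B_{G,X}(n+d)|\) for each \(i\); summing over these \(k\) disjoint cosets gives \(k\,|H\cap\B_{G,X}(n)|\le|\B_{G,X}(n+d)|\), hence \(\limsup_n |H\cap\B_{G,X}(n)|/|\B_{G,X}(n)|\le1/k\). Letting \(k\to\infty\) shows this relative density is \(0\), and the shift lemma (via \(|gH\cap\B_{G,X}(n)|\le|H\cap\B_{G,X}(n+|g|_X)|\)) then transfers this to every coset \(gH\), with right cosets handled by inversion as before. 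The only genuinely essential use of the hypothesis is the passage through these radius shifts: the argument works precisely because stable subexponential growth makes \(|\B_{G,X}(n\pm d)|\sim|\B_{G,X}(n)|\) for every fixed \(d\), which fails in the exponential case. Apart from this there is no deep obstacle — the main point to watch is that each shift \(d\) (depending on the chosen representatives, or on \(k\)) is fixed before the limit in \(n\) is taken, so it does not affect the limiting ratios.
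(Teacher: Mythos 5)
Your argument is correct, and in fact it is more self-contained than anything in the paper: the paper does not prove this proposition but cites it directly from \cite[Proposition~2.2]{indexformula}, adding only a remark that the infinite-index case ``follows easily from their arguments'' and that one could alternatively argue via an invariant mean built with an ultrafilter (with stable subexponential growth guaranteeing that the choice of ultrafilter is irrelevant). Your route is different and more elementary than either of those: you bypass invariant means entirely and get both the finite- and infinite-index cases from the single shift estimate
\[
|H\cap\B_{G,X}(n-d)|\ \le\ |gH\cap\B_{G,X}(n)|\ \le\ |H\cap\B_{G,X}(n+d)|,
\]
combined with \(|\B_{G,X}(n+k)|\sim|\B_{G,X}(n)|\), which is exactly what stable subexponential growth supplies. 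The squeeze that converts the sandwich \(m\,|H\cap\B(n-d)|\le|\B(n)|\le m\,|H\cap\B(n+d)|\) into a genuine limit for \(\alpha_n=|H\cap\B(n)|/|\B(n)|\) works because the error factors \(|\B(n\pm d)|/|\B(n)|\to 1\) and \(\alpha_n\) is bounded, so shifting the index does not change \(\limsup\) or \(\liminf\); this is worth stating explicitly if you write it up, but it is correct as you have it. The reduction of right cosets to left cosets via inversion is clean, and the infinite-index argument correctly exploits the fact that the chosen shift \(d\) depends only on the finitely many representatives fixed in advance, not on \(n\). Overall, a sound and arguably cleaner proof than what the literature offers, and it also recovers the unnumbered lemma immediately following the proposition (the version with a fixed radius offset \(d\)) as a byproduct of the same shift estimate.
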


\begin{rem}
	The last statement does not appear explicitly in \cite{indexformula}, but follows easily from their arguments. Alternatively, one could prove this via the construction of an invariant mean which requires the choice of an ultrafilter. The stable subexponential condition ensures that any ultrafilter will do, and hence that all limit points of the sequences above are equal.
\end{rem}

\begin{lem}
	Let \(G\) be a group of stable subexponential growth with generating set \(X\), let \(g \in G\) and let \(H\) be a finite index subgroup of \(G\). For \(d \in \N\) we have
	\[
		\lim_{n\rightarrow\infty}\frac{|gH\cap\B_{G,X}(n+d)|}{|\B_{G,X}(n)|}=\frac{1}{[G:H]}.
	\]
\end{lem}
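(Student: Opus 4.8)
The plan is to deduce this from \cref{pepprop} by a simple squeeze argument, using the fact that balls grow slowly in a group of stable subexponential growth. First I would note the obvious monotonicity inclusions: since $d \ge 0$, we have $\B_{G,X}(n) \subseteq \B_{G,X}(n+d)$, so $|gH \cap \B_{G,X}(n)| \le |gH \cap \B_{G,X}(n+d)| \le |gH \cap \B_{G,X}(n+d)|$, and trivially $gH \cap \B_{G,X}(n+d) \subseteq \B_{G,X}(n+d)$. The idea is to rewrite
\[
	\frac{|gH\cap\B_{G,X}(n+d)|}{|\B_{G,X}(n)|} = \frac{|gH\cap\B_{G,X}(n+d)|}{|\B_{G,X}(n+d)|} \cdot \frac{|\B_{G,X}(n+d)|}{|\B_{G,X}(n)|},
\]
so the limit factors as a product of two limits, provided each exists.

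Next I would handle the two factors separately. The first factor tends to $\frac{1}{[G:H]}$ by \cref{pepprop} applied along the shifted sequence $n+d$ (substituting $m = n+d$, which still runs to infinity). The second factor tends to $1$: this is exactly \eqref{eqn:ball_ratio} in the virtually nilpotent case, but more generally it follows by iterating the defining property of stable subexponential growth in \cref{stable}, $|\B_{G,X}(n+1)|/|\B_{G,X}(n)| \to 1$, a total of $d$ times (a finite product of sequences each converging to $1$ converges to $1$). Multiplying the two limits gives the claimed value $\frac{1}{[G:H]}$.

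There is essentially no obstacle here; the only point requiring a word of care is that \cref{pepprop} is stated for $\B_{G,X}(n)$ in both numerator and denominator, so one must observe that the ratio $|gH \cap \B_{G,X}(n+d)| / |\B_{G,X}(n+d)|$ is a subsequence-free reindexing of the same quantity and hence has the same limit. One could alternatively avoid the factorisation entirely and squeeze directly: $|gH \cap \B_{G,X}(n)| \le |gH \cap \B_{G,X}(n+d)| \le |\B_{G,X}(n+d)|$, and after dividing by $|\B_{G,X}(n)|$ the outer terms tend to $\frac{1}{[G:H]}$ (by \cref{pepprop}) and to $\lim |\B_{G,X}(n+d)|/|\B_{G,X}(n)| = 1$ times... which does not immediately pinch, so the factorisation route is cleaner. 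I would write up the factorisation argument.

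Finally, I would remark that the same statement holds with $H$ of infinite index and $\frac{1}{[G:H]}$ replaced by $0$, by the identical argument using the last sentence of \cref{pepprop}; this variant will likely be needed later in the paper, so it is worth recording, though the lemma as stated only asserts the finite-index case.
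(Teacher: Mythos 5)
Your argument is correct, and the paper in fact omits a proof of this lemma entirely, presumably regarding it as an immediate consequence of \cref{pepprop} and the definition of stable subexponential growth. Your factorisation
\[
	\frac{|gH\cap\B_{G,X}(n+d)|}{|\B_{G,X}(n)|} = \frac{|gH\cap\B_{G,X}(n+d)|}{|\B_{G,X}(n+d)|} \cdot \frac{|\B_{G,X}(n+d)|}{|\B_{G,X}(n)|}
\]
is exactly the natural argument: the first factor is a shift-reindexing of the quantity treated by \cref{pepprop} and tends to $1/[G:H]$, while the second telescopes into a finite product of $d$ ratios $|\B(n+k+1)|/|\B(n+k)|$, each of which tends to $1$ by \cref{stable}. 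You were also right to abandon the one-sided squeeze, which bounds the quantity from below by $1/[G:H]$ but from above only by $1$ and hence does not pinch; the factorisation is the clean way to go. Your closing observation about the infinite-index variant is correct and consistent with the last sentence of \cref{pepprop}.
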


From now on, whenever there is no ambiguity concerning the group and its generating set, we will write \(\B(n)\) instead of \(\B_{G, X}(n)\).

\subsection{Non-virtually abelian groups of stable subexponential growth}	\label{stsubexp}
Antol\'{i}n, Martino and Ventura have shown that the commutativity degree \cite[Proposition~2.3]{dcA} of a residually finite group of subexponential growth is non-zero if and only if the group is virtually abelian.

We generalise this result to the degree of twisted commutativity.
In this section, we prove that residually finite non-virtually abelian groups have twisted degree of commutativity equal to zero.
To do so, we mimic the proof of Antol\'in, Martino and Ventura, and adapt it where necessary to include the endomorphism \(\phi\).

\begin{prop}	\label{prop:tdcOnFiniteQuotientIsLarger}
	Let \(G\) be a finitely generated group of stable subexponential growth and let \(\phi \in \End(G)\). Suppose that \(N\) is a normal subgroup of finite index in \(G\) such that \(\phi(N) \le N\). Let \(\overline{\phi}\) denote the induced endomorphism on \(G / N\). 
	
	Then \(\tdc_{X}(\phi, G) \le \tdc(\overline{\phi}, G/N)\) for any generating set \(X\) of \(G\).
\end{prop}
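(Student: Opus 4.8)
The plan is to relate the numerator of $\tdc_X(\phi,G)$ to a sum over $G/N$ and then use \cref{pepprop} to show that each fiber of the quotient map contributes, in the limit, exactly $1/[G:N]$ of the ball. Write $[G:N]=m$. Starting from the expansion \eqref{def:tdcsum}, the key observation is that the condition $xy=y\phi(x)$ descends to $G/N$: if $xy=y\phi(x)$ in $G$, then $\overline{x}\,\overline{y}=\overline{y}\,\overline{\phi}(\overline{x})$ in $G/N$. So every pair $(x,y)\in\B(n)^2$ counted in the numerator maps to a pair $(\overline x,\overline y)$ in $(G/N)^2$ that is counted by $\tdc(\overline\phi,G/N)$. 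I would reorganize the count by grouping pairs $(x,y)\in\B(n)^2$ according to the cosets $\overline x=uN$, $\overline y=vN$ they lie in: the numerator is
\[
\sum_{(uN,vN)} \size{\{(x,y)\in (uN\cap\B(n))\times(vN\cap\B(n)) : xy=y\phi(x)\}},
\]
and the inner term is bounded above by $\size{uN\cap\B(n)}\cdot\size{vN\cap\B(n)}$ whenever the pair of cosets $(uN,vN)$ is a pair on which twisted commutativity can possibly hold, i.e. whenever $uN\cdot vN = vN\cdot\overline\phi(uN)$ in $G/N$, and is $0$ otherwise.

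Next I would divide by $\size{\B(n)}^2$ and pass to the $\limsup$. Since the sum over cosets is finite (there are $m^2$ pairs), the $\limsup$ of the sum is at most the sum of the $\limsup$s of the individual terms. For each coset pair $(uN,vN)$ with $uN\cdot vN=vN\cdot\overline\phi(uN)$, \cref{pepprop} gives
\[
\lim_{n\to\infty}\frac{\size{uN\cap\B(n)}}{\size{\B(n)}}=\frac1m,\qquad
\lim_{n\to\infty}\frac{\size{vN\cap\B(n)}}{\size{\B(n)}}=\frac1m,
\]
so that term contributes at most $1/m^2$; all other coset pairs contribute $0$. Hence $\tdc_X(\phi,G)\le \frac{1}{m^2}\cdot\size{\{(uN,vN)\in(G/N)^2: uN\cdot vN=vN\cdot\overline\phi(uN)\}}$, and the right-hand side is exactly $\tdc(\overline\phi,G/N)$ by the finite-group definition \eqref{tdcfinite}.

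The main technical point to be careful about is the interchange of $\limsup$ with the finite sum and the use of \cref{pepprop}: one must note that $\limsup$ is subadditive over finite sums, and that for the coset pairs that actually contribute, the upper bound $\size{uN\cap\B(n)}\cdot\size{vN\cap\B(n)}$ for the inner count has a genuine limit (a product of two limits), so passing to the $\limsup$ is harmless. I also need to make sure that $\overline\phi$ is well-defined as an endomorphism of $G/N$ — but this is exactly the hypothesis $\phi(N)\le N$, and the compatibility $\overline{\phi(x)}=\overline\phi(\overline x)$ used above is immediate from the definition of the induced map. No subtlety about generating sets arises, since the argument works for any finite generating set $X$ of $G$ — the stable subexponential growth hypothesis guarantees the coset densities in \cref{pepprop} are independent of $X$. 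I expect the only real ``obstacle'' is bookkeeping: making the reorganization of the double sum by cosets clean and correctly identifying which coset pairs are admissible.
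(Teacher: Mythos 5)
Your proof is correct and takes essentially the same approach as the paper: both rest on the two observations that the relation $xy = y\phi(x)$ descends to $G/N$, and that \cref{pepprop} gives each coset of $N$ density $1/[G:N]$ in the balls. The only difference is presentational — the paper argues by contradiction with an explicit $\eps$-$\delta$ estimate, whereas you argue directly by decomposing the numerator over the finitely many coset pairs $(uN, vN)$ and using subadditivity of $\limsup$ over a finite sum, which is arguably cleaner.
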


\begin{proof}
	Set \(d \coloneq [G : N]\) and fix a generating set \(X\) of \(G\).
	By \cref{pepprop}, the equality
	\[
		\lim_{n \to \infty} \frac{|gN \cap \B(n)|}{|\B(n)|} = \frac{1}{d}
	\]
	holds for all \(g \in G\).
	As there are only finitely many cosets of \(N\) in \(G\), this limit is even uniform in \(g\): for every \(\eps > 0\), there exists an \(n_{0} \in \Z_{>0}\) such that for all \(n \ge n_{0}\) and for all \(g \in G\),
	\begin{equation}	\label{eq:rewritingOfLimit}
		\left(\frac{1}{d} - \eps\right) \size{\B(n)} \le \size{gN \cap \B(n)} \le \left(\frac{1}{d} + \eps\right) \size{\B(n)}.
	\end{equation}
	Indeed, let \(g_{1}, \ldots, g_{k}\) be a full set of representatives of the cosets of \(N\) in \(G\).
	Given \(\eps > 0\), let \(n_{0, i}\) be such that \eqref{eq:rewritingOfLimit} holds for \(g_{i}\) and \(n \ge n_{0, i}\).
	Taking \(n_{0} \coloneq \max_{i} n_{0, i}\), we obtain \eqref{eq:rewritingOfLimit} for all \(n \ge n_{0}\) and \(g \in G\).
	
	Next, suppose for the sake of contradiction that \(\tdc_{X}(\phi, G) > \tdc(\overline{\phi}, G / N)\).
	This implies that there is a \(\delta > 0\) such that
	\begin{equation}	\label{eq:reverseInequality}
		\frac{|\{(x, y) \in \B(n)^2 :  xy=y\phi(x)\}|}{|\B(n)|^2} > \tdc(\overline{\phi}, G / N) + \delta
	\end{equation}
	for infinitely many integers \(n\).
	Take \(\eps > 0\) such that \(\eps d (2 + \eps d) \le \delta\) and let \(n_{0}\) be such that \eqref{eq:rewritingOfLimit} holds for all \(n \ge n_{0}\) and \(g \in G\).
	For \(n \ge n_{0}\) such that \eqref{eq:reverseInequality} holds, we first find
	\begin{small}
	\begin{align*}
		\tdc(\overline{\phi}, G / N) + \delta	&<	\frac{|\{(x, y) \in \B(n)^2 :  xy=y\phi(x)\}|}{|\B(n)|^2}	\\
		&\le \frac{1}{\size{\B(n)}^{2}} \size{\{(xN, yN) \in (G / N)^{2} : xNyN = yN \overline{\phi}(xN)\}} \left(\frac{1}{d} + \eps\right)^{2} \size{\B(n)}^{2},
	\end{align*}
	\end{small}%
	since \(xy = y \phi(x)\) implies that \(xN yN = yN \overline{\phi}(xN)\), and as every coset in \(G / N\) has at most \((1 / d + \eps) \size{\B(n)}\) representatives in \(\B(n)\).
	
	We work this out to obtain
	\begin{small}
\begin{align*}
	\tdc(\overline{\phi}, G / N) + \delta	&<	\frac{1}{\size{\B(n)}^{2}} \size{\{(xN, yN) \in (G / N)^{2} : xNyN = yN \overline{\phi}(xN)\}} \left(\frac{1}{d} + \eps\right)^{2} \size{\B(n)}^{2}	\\
		&=	\frac{|\{(xN, yN) \in (G / N)^{2} :  xNyN = yN \overline{\phi}(xN)\}|}{d^2} (1 + \eps d)^{2}	\\
		& \le	\frac{|\{(xN, yN) \in (G / N)^{2} :  xNyN = yN \overline{\phi}(xN)\}|}{d^2} + 2\eps d + \eps^{2} d^{2}	\\
		&= \tdc(\overline{\phi}, G / N) + \eps d (2 + \eps d),
\end{align*}
\end{small}

From this, we obtain the contradiction \(\delta < \eps d(2 + \eps d)\).

We conclude that \(\tdc_{X}(\phi, G) \le \tdc(\overline{\phi}, G / N)\).
\end{proof}

Next, we need two more technical lemmata.
\begin{lem}[{\cite[Lemma~2.2]{HsuWise03}}	]\label{lem:FullyCharacteristicFiniteIndexSubgroup}
	Let \(G\) be a finitely generated group and \(H\) a proper finite index subgroup.
	Then there is a fully characteristic finite index subgroup contained in \(H\).
\end{lem}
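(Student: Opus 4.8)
The plan is to take $K$ to be the intersection of all subgroups of $G$ of index at most $n \coloneq [G : H]$. First I would invoke the classical fact (due to M.\ Hall) that a finitely generated group has only finitely many subgroups of any prescribed finite index: a subgroup of index $k$ arises as the stabiliser of a point in a transitive action of $G$ on a $k$-element set, and such an action is a homomorphism $G \to S_{k}$, of which there are only finitely many since $G$ is generated by finitely many elements. Summing over $k \le n$, the family $\mathcal{L} \coloneq \{L \le G : [G : L] \le n\}$ is finite, so $K \coloneq \bigcap_{L \in \mathcal{L}} L$ is again of finite index in $G$ (a finite intersection of finite-index subgroups), and moreover $K \le H$ because $H$ itself belongs to $\mathcal{L}$.

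Next I would verify that $K$ is fully characteristic, i.e.\ $\phi(K) \le K$ for every $\phi \in \End(G)$. The key observation is that $\mathcal{L}$ is closed under taking preimages: if $L \in \mathcal{L}$, then the assignment $g\,\phi^{-1}(L) \mapsto \phi(g)L$ is a well-defined injection of $G / \phi^{-1}(L)$ into $G / L$, whence $[G : \phi^{-1}(L)] \le [G : L] \le n$ and $\phi^{-1}(L) \in \mathcal{L}$. Therefore $K \le \phi^{-1}(L)$, which says precisely that $\phi(K) \le L$. Since this holds for every $L \in \mathcal{L}$, we conclude $\phi(K) \le \bigcap_{L \in \mathcal{L}} L = K$, as required, and the lemma follows.

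I do not expect a genuine obstacle; the only point demanding care is that ``fully characteristic'' here must mean invariance under all endomorphisms, not merely automorphisms, and the preimage argument $\phi^{-1}(L) \in \mathcal{L}$ is exactly what deals uniformly with non-surjective $\phi$. (One might be tempted to define $K$ as $\bigcap_{\phi \in \End(G)} \phi^{-1}(H)$, but the finiteness of that family is not transparent, so intersecting over all subgroups of index $\le n$ is the cleaner route.)
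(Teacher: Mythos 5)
Your proof is correct, and it is essentially the same argument that Hsu and Wise give for their Lemma~2.2 (the paper itself does not reproduce a proof, it simply cites that reference). The only detail worth flagging is one you already flagged yourself: the crucial step $\phi^{-1}(L) \in \mathcal{L}$ relies on the injection $G/\phi^{-1}(L) \hookrightarrow G/L$ rather than on surjectivity of $\phi$, which is exactly what makes the intersection invariant under all endomorphisms and not merely automorphisms, as ``fully characteristic'' (fully invariant) requires.
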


\begin{lem}[{\cite[Lemma~2.1]{dcA}}]	\label{lem:DCAtMostProductOfDcs}
	Let \(G\) be a finite group and \(N\) a normal subgroup of \(G\).
	Then
	\[
		\dc(G) \le \dc(N) \cdot \dc(G / N).
	\]
\end{lem}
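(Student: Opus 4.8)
The plan is to reduce the statement to an inequality about centralizer sizes and to prove that by a double-counting argument. For a finite group $H$ and $h \in H$, the elements commuting with $h$ form the centralizer $C_H(h)$, so summing over $h$ gives $\dc(H) = \frac{1}{|H|^{2}}\sum_{h \in H}|C_H(h)|$ (equivalently, $\dc(H)$ equals the number of conjugacy classes of $H$ divided by $|H|$). Writing this out for $G$, for $N$, and for $G/N$, and using $|G| = |N|\cdot|G/N|$, the claimed bound $\dc(G) \le \dc(N)\dc(G/N)$ is equivalent to
\[
	\sum_{x \in G}|C_G(x)| \;\le\; \Bigl(\sum_{k \in N}|C_N(k)|\Bigr)\Bigl(\sum_{\bar x \in G/N}|C_{G/N}(\bar x)|\Bigr).
\]

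First I would prove a pointwise bound: for every $x \in G$,
\[
	|C_G(x)| \;\le\; |C_N(x)| \cdot |C_{G/N}(xN)|.
\]
Since $N \lhd G$, one has $C_G(x) \cap N = C_N(x)$, and the image of $C_G(x)$ in $G/N$ commutes with $xN$, so $C_G(x)N/N \le C_{G/N}(xN)$; the bound then follows from the second isomorphism theorem, $|C_G(x)| = |C_N(x)|\cdot|C_G(x)N/N|$. Summing over $x \in G$ and grouping the elements by their coset modulo $N$ gives
\[
	\sum_{x \in G}|C_G(x)| \;\le\; \sum_{\bar x \in G/N}|C_{G/N}(\bar x)|\sum_{x \in \bar x}|C_N(x)|,
\]
where the inner sum ranges over the $|N|$ preimages of $\bar x$ in $G$.

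The crux is a uniform bound on the inner sum: for every $g \in G$,
\[
	\sum_{x \in gN}|C_N(x)| \;\le\; \sum_{k \in N}|C_N(k)|.
\]
I would establish this by switching the order of summation: $\sum_{x \in gN}|C_N(x)| = |\{(x,k) \in gN \times N : xk = kx\}| = \sum_{k \in N}|C_G(k) \cap gN|$, and for each fixed $k \in N$ the set $C_G(k) \cap gN$ is either empty or a left coset of the subgroup $C_G(k) \cap N = C_N(k)$ inside $C_G(k)$, hence has size at most $|C_N(k)|$. Feeding this into the previous display yields the target inequality, and dividing through by $|G|^{2} = |N|^{2}|G/N|^{2}$ recovers $\dc(G) \le \dc(N)\dc(G/N)$.

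The step I expect to carry the real content is this last uniform coset bound: for the trivial coset $\sum_{x \in N}|C_N(x)|$ is an identity (it counts all commuting pairs in $N$), but for a non-trivial coset one obtains only an inequality, and the clean justification is precisely the observation that a centralizer meets a coset of $N$ in at most one coset of a subgroup. The remaining ingredients — the identity $\dc(H) = \frac{1}{|H|^{2}}\sum_{h}|C_H(h)|$, the second isomorphism theorem, and the reindexing of the double sums — are routine bookkeeping. One could alternatively run the whole argument as a proof that the number of conjugacy classes of $G$ is at most the product of those of $N$ and of $G/N$, which is the same statement after clearing denominators.
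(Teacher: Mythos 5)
Your proof is correct, and since the paper merely cites \cite[Lemma~2.1]{dcA} without reproducing an argument, you have in effect supplied the missing details. Each step checks out: the identity $\dc(H)=\frac{1}{|H|^{2}}\sum_{h\in H}|C_H(h)|$ is the standard rewriting of the numerator by summing over one coordinate; the pointwise bound $|C_G(x)|\le|C_N(x)|\cdot|C_{G/N}(xN)|$ follows exactly as you say from $C_G(x)\cap N=C_N(x)$, the inclusion $C_G(x)N/N\le C_{G/N}(xN)$, and the second isomorphism theorem; and the coset bound $\sum_{x\in gN}|C_N(x)|\le\sum_{k\in N}|C_N(k)|$ is established by your double count, using that for $k\in N$ the set $C_G(k)\cap gN$ is empty or a coset of $C_N(k)$ (the verification that a subgroup meets a coset of a normal subgroup in a coset of the intersection is routine). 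Reassembling the sums and dividing by $|G|^{2}=|N|^{2}|G/N|^{2}$ gives the claim.

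For comparison, the argument in \cite{dcA} is essentially the same double count packaged slightly differently: it is phrased as $k(G)\le k(N)\cdot k(G/N)$ for conjugacy-class numbers, and one bounds, for each conjugacy class $\bar C$ of $G/N$, the number of $G$-classes meeting the union of cosets in $\bar C$ by the number of $N$-orbits on a single coset $gN$, which by Burnside's lemma is $\frac{1}{|N|}\sum_{k\in N}|C_G(k)\cap gN|\le\frac{1}{|N|}\sum_{k\in N}|C_N(k)|=k(N)$. That inner estimate is identical to your ``crux'' step; the difference is only whether one organises the outer count by cosets (your pointwise centralizer bound plus grouping) or by Burnside directly on each coset. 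Both routes buy the same thing; yours has the small advantage of displaying the clean intermediate inequality $|C_G(x)|\le|C_N(x)|\cdot|C_{G/N}(xN)|$, which is of independent interest.
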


We can now prove the first part of the generalisation of Antol\'in's, Martino's and Ventura's result.
\begin{thm}	\label{theo:tdcIsZeroForFGResFiniteGroupSubexpGrowthNonVirtuallyAbelian}
Let \(G\) be a finitely generated residually finite group of stable subexponential growth. Assume that \(G\) is not virtually abelian. Let \(\phi \in \End(G)\) and let \(X\) be a finite generating set.
Then 
\[
	\tdc_{X}(\phi, G) = \inf\{\tdc(\overline{\phi}, G / N) \mid N \lhd_{f} G, \phi(N) \le N, \overline{\phi} \text{ induced endo on } G / N\} =0.
\]
\end{thm}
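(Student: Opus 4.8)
The plan is to deduce the theorem from the machinery already in place, reducing it to the purely ``untwisted'' fact that $\inf\{\dc(G/N) : N \lhd_{f} G\} = 0$ for a finitely generated residually finite group $G$ that is not virtually abelian (the substance of \cite[Proposition~2.3]{dcA}), which I would also re-derive. The first, easy, step is the upper bound: by \cref{prop:tdcOnFiniteQuotientIsLarger}, for every $N \lhd_{f} G$ with $\phi(N) \le N$ we have $\tdc_{X}(\phi, G) \le \tdc(\overline{\phi}, G/N)$, so $\tdc_{X}(\phi, G) \le \inf\{\tdc(\overline{\phi}, G/N) : N \lhd_{f} G,\ \phi(N) \le N\}$; since $\tdc_{X}(\phi, G) \ge 0$, it remains only to prove that this infimum vanishes.

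Next I would strip the problem down to commutativity degrees of finite quotients. By \cref{cor:tdcAtMostDc} we have $\tdc(\overline{\phi}, G/N) \le \dc(G/N)$ for each admissible $N$, so it suffices to show $\inf\{\dc(G/N) : N \lhd_{f} G,\ \phi(N) \le N\} = 0$. Two observations then remove the constraint $\phi(N) \le N$: by \cref{lem:FullyCharacteristicFiniteIndexSubgroup} every proper finite-index subgroup of $G$ contains a fully characteristic finite-index subgroup, and such a subgroup is automatically normal and $\phi$-invariant; and if $N \le M$ are finite-index normal subgroups of $G$, then applying \cref{lem:DCAtMostProductOfDcs} to the group $G/N$ and its normal subgroup $M/N$ gives $\dc(G/N) \le \dc(M/N)\dc(G/M) \le \dc(G/M)$. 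Hence the $\phi$-invariant infimum coincides with $\inf\{\dc(G/M) : M \lhd_{f} G\}$, and the whole theorem is reduced to showing this last infimum is $0$.

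For the untwisted statement I would argue by contradiction, following \cite{dcA}: suppose $c \coloneq \inf\{\dc(G/M) : M \lhd_{f} G\} > 0$, so $\dc(G/M) \ge c$ for every $M \lhd_{f} G$. Invoke the finite-group fact (built on \cref{theo:GustafsonsBound}) that there is an integer $m = m(c)$ such that any finite group $F$ with $\dc(F) \ge c$ has an abelian normal subgroup of index at most $m$; applied to $G/M$ this yields $A_{M} \lhd G$ with $M \le A_{M}$, $[G : A_{M}] \le m$ and $[A_{M}, A_{M}] \le M$. As $G$ is finitely generated it has only finitely many subgroups of index at most $m$; let $H$ be their intersection, so $H \le A_{M}$ and therefore $[H, H] \le [A_{M}, A_{M}] \le M$ for all $M \lhd_{f} G$. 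Residual finiteness gives $\bigcap_{M \lhd_{f} G} M = 1$, hence $[H, H] = 1$, so $H$ is an abelian subgroup of finite index in $G$, contradicting the hypothesis that $G$ is not virtually abelian; thus $c = 0$. Chaining the inequalities above then gives $0 \le \tdc_{X}(\phi, G) \le \inf\{\tdc(\overline{\phi}, G/N) : N \lhd_{f} G,\ \phi(N) \le N\} \le \inf\{\dc(G/M) : M \lhd_{f} G\} = 0$, so all three quantities are $0$, which is the assertion.

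The main obstacle is precisely the finite-group input used in the last step: the passage from ``$\dc(F)$ bounded below'' to ``$F$ has an abelian normal subgroup of bounded index''. Gustafson's $5/8$ bound alone only controls coarse data such as the number of non-abelian chief factors, and that is genuinely insufficient — the integral Heisenberg group is not virtually abelian yet has all finite quotients nilpotent — so the sharper statement is essential. I would either cite it from the literature on commutativity degree (it underlies the corresponding argument in \cite{dcA}) or prove it by an averaging argument bounding, for a positive proportion of elements, the size of the conjugacy class, combined with a Neumann-type bound on the derived subgroup of a group with many small conjugacy classes. Everything else in the proof is formal, relying only on \cref{prop:tdcOnFiniteQuotientIsLarger}, \cref{cor:tdcAtMostDc}, \cref{lem:FullyCharacteristicFiniteIndexSubgroup}, \cref{lem:DCAtMostProductOfDcs}, and residual finiteness.
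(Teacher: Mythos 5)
Your reduction of the twisted statement to the untwisted infimum $\inf\{\dc(G/M) : M \lhd_f G\} = 0$ is clean and correct (via \cref{prop:tdcOnFiniteQuotientIsLarger}, \cref{cor:tdcAtMostDc}, \cref{lem:FullyCharacteristicFiniteIndexSubgroup}, and \cref{lem:DCAtMostProductOfDcs}), but the finite-group structural fact you invoke to finish is false. You claim that there is $m = m(c)$ such that any finite group $F$ with $\dc(F) \ge c$ has an abelian normal subgroup of index at most $m$. The extraspecial $2$-groups $E_n$ of order $2^{2n+1}$ kill this: each has $2^{2n}+1$ conjugacy classes, so $\dc(E_n) = \tfrac{1}{2} + 2^{-(2n+1)} > \tfrac{1}{2}$ uniformly, yet a maximal abelian subgroup of $E_n$ has order $2^{n+1}$, hence index $2^n \to \infty$. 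No averaging argument can rescue the statement; the correct result in this direction is P.~M.~Neumann's theorem, which only yields normal subgroups $K \le H$ with $|K|$ and $[F:H]$ bounded in terms of $c$ and $H/K$ abelian (a ``bounded-by-abelian-by-bounded'' structure). Upgrading that weaker conclusion to ``$G$ virtually abelian'' is possible but requires a genuinely further argument (roughly: the resulting finite-index subgroup $H \le G$ has derived subgroup meeting every $M$ in bounded index, forcing $[H,H]$ finite, and a converse-to-Schur argument then makes $H$ centre-by-finite), none of which is in your sketch.

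The paper's own proof avoids this entirely and is much more elementary. Since $G$ is not virtually abelian, neither is any finite-index subgroup, so one can iterate: pick a non-commuting pair, use residual finiteness plus \cref{lem:FullyCharacteristicFiniteIndexSubgroup} to find a fully characteristic finite-index $K_1 \lhd G$ with $G/K_1$ non-abelian, then repeat inside $K_1$, producing a chain $G = K_0 \rhd K_1 \rhd K_2 \rhd \cdots$ of fully characteristic finite-index subgroups with each $K_{i-1}/K_i$ non-abelian. Applying \cref{lem:DCAtMostProductOfDcs} and Gustafson's bound (\cref{theo:GustafsonsBound}) at each stage gives $\dc(G/K_i) \le (5/8)\,\dc(G/K_{i-1})$, hence $\dc(G/K_i) \le (5/8)^i$, and then \cref{prop:tdcOnFiniteQuotientIsLarger} together with \cref{cor:tdcAtMostDc} gives $\tdc_X(\phi, G) \le (5/8)^i$ for all $i$. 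This multiplicative-chain argument is the content of \cite[Proposition~2.3]{dcA} and needs no structure theorem for finite groups of high commutativity degree. I would recommend adopting that route rather than patching yours.
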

\begin{proof}
	Since \(G\) is not abelian (being not virtually abelian), there are \(g, h \in G\) such that \([g, h] \coloneq \inv{g}\inv{h}gh \ne 1\).
	Furthermore, \(G\) is residually finite, so there is a finite index subgroup \(K_{1}\) such that neither of \(g, h,\) or $ [g, h]\) lies in \(K_{1}\).
	In particular, \(G / K_{1}\) is not abelian.
	By \cref{lem:FullyCharacteristicFiniteIndexSubgroup}, we may assume that \(K_{1}\) is fully characteristic.
	
	As a subgroup of a residually finite group, \(K_{1}\) is itself residually finite.
	As \(K_{1}\) has finite index in \(G\), it is finitely generated and not virtually abelian either.
	Hence, we can find a fully characteristic subgroup \(K_{2}\) of finite index in \(K_{1}\) such that \(K_{1} / K_{2}\) is not abelian.
	
	This way, we construct a descending sequence of subgroups
	\[
		K_{0} \coloneq G \rhd K_{1} \rhd K_{2} \rhd \dots \rhd K_{n - 1} \rhd K_{n} \rhd \dots
	\]
	such that each \(K_{i}\) is fully characteristic of finite index in its predecessor and such that each successive quotient is non-abelian.
	As a consequence, each \(K_{i}\) is fully characteristic of finite index in \(G\). \cref{lem:DCAtMostProductOfDcs,theo:GustafsonsBound} then
	imply that
	\[
		\dc(G / K_{i}) \le \dc(K_{i - 1} / K_{i}) \cdot \dc(G / K_{i - 1}) \le \frac{5}{8} \dc(G / K_{i - 1})
	\]
	for each \(i \ge 1\).
	Inductively, \(\dc(G / K_{i}) \le (5 / 8)^{i}\) for each \(i \geq 1\).
	
	Combining this with \cref{prop:tdcOnFiniteQuotientIsLarger} and \cref{cor:tdcAtMostDc}, we get
	\[
		\tdc_{X}(\phi, G) \le \tdc(\overline{\phi}_{i}, G / K_{i}) \le \dc(G / K_{i}) \le \left(\frac{5}{8}\right)^{i}
	\]
	for every \(i \ge 1\), where \(\overline{\phi}_{i}\) is the induced endomorphism on \(G / K_{i}\).
	We conclude that \(\tdc_{X}(\phi, G) = 0\). Moreover, as \(G / K_{i}\) is a finite group for each \(i \ge 1\), the infimum from the statement is zero as well.
	
\end{proof}

\subsection{Virtually abelian groups} \label{sec:tdc_VirtuallyAbelian}
In this section, we continue to generalise Antol\'in's, Martino's and Ventura's results, by giving equivalent conditions for the  degree of twisted commutativity to be non-zero for finitely generated virtually abelian groups.

Given a \fg{} virtually abelian group \(G\), we can find a \tf{} abelian fully invariant subgroup of finite index in \(G\) as follows: 
\(G\) has a finitely generated abelian subgroup \(A\) of finite index.
As \(A\) is finitely generated, it is virtually \tf{}.
It therefore contains a torsion-free subgroup \(B\) of finite index.

Next, we use \(B\) to construct a \tf{} abelian fully invariant subgroup of \(G\). Note that \(B\) has finite index in \(G\). Hence, by \cref{lem:FullyCharacteristicFiniteIndexSubgroup}, there exists a fully invariant subgroup \(K\) contained in \(B\) that has finite index in \(G\). Since \(K\) is a subgroup of \(B\), it is \tf{} and abelian as well.

We recall the definition of big-O and big-theta notation.

\begin{defn}
	Let \(f, g \colon \N \to \R\) be two functions.
	\begin{enumerate}[(i)]
		\item We write \(f(n) = O(g(n))\) if there exist \(c, M \in \R_{> 0}\) such that
		\[
			|f(n)| \le c |g(n)|
		\]
		for all \(n \ge M\).
		\item We write \(f(n) = \Theta(g(n))\) if there exist \(c_{1}, c_{2}, M \in \R_{> 0}\) such that
		\[
			c_{1} g(n) \le f(n) \le c_{2} g(n)
		\]
		 for all \(n \ge M\).
	\end{enumerate}
\end{defn}

\begin{thm}	\label{thm:CharacterisationTdcNonZeroVirtuallyAbelian}
	Let \(G\) be a finitely generated virtually abelian group and let \(X\) be a finite generating set.
	Let \(A\) be a \tf{} abelian fully invariant subgroup of finite index in \(G\) and fix \(\phi \in \End(G)\).
	
	Then \(\tdc_{X}(\phi, G) > 0\) if and only if there is \(g \in G\) such that \(\restrb{\inn{g} \circ \phi}{A} = \Id_{A}\).
	
	In particular, if \(Y\) is another finite generating set of \(G\), then \(\tdc_{X}(\phi, G) = 0\) if and only if \(\tdc_{Y}(\phi, G) = 0\).
\end{thm}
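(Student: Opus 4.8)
The plan is to reduce the statement to a ``trivial twisted commutativity on a finite-index subgroup'' phenomenon and to exploit the polynomial growth of $G$ together with \cref{pepprop}. Since $A$ is finitely generated free abelian, say $A \cong \Z^{d}$, and $A$ is fully invariant, $\phi$ restricts to an endomorphism $\restr{\phi}{A}$ of $A$, which we may think of as an integer matrix $M$. For $g \in G$, the map $\restrb{\inn{g}\circ\phi}{A}$ is again an endomorphism of $A$ (as $A$ is normal), given by some matrix $M_{g}$, and the condition $\restrb{\inn{g}\circ\phi}{A} = \Id_{A}$ says $M_{g} = I$. Writing the numerator of $\tdc_{X}(\phi,G)$ in the form \eqref{def:tdcsum}, the key object is $\size{\Fix(\inn{g}\circ\phi)\cap\B(n)}$, and by \cref{lem:fixstab} this is the same as $\size{\Stab_{\phi}(g)\cap\B(n)}$. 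So the strategy is: (1) show that if such a $g$ exists then $\Fix(\inn{h}\circ\phi)$ has finite index in $G$ for a positive-density set of $h$, forcing $\tdc_{X}(\phi,G) > 0$ via \cref{pepprop}; and (2) show that if no such $g$ exists then $\Fix(\inn{g}\circ\phi)$ has infinite index for \emph{every} $g$, so every summand in \eqref{def:tdcsum} is $o(\size{\B(n)})$ and the $\limsup$ is $0$.

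For the forward-ish direction (the ``only if'' for positivity, i.e.\ step (2)): suppose $\restrb{\inn{g}\circ\phi}{A} \ne \Id_{A}$ for all $g \in G$. Fix any $g \in G$ and let $\psi \coloneq \inn{g}\circ\phi$. Then $\Fix(\psi)\cap A = \Fix(\restr{\psi}{A})$ is the fixed subgroup of the matrix $M_{g} \ne I$ acting on $A \cong \Z^{d}$, hence is the intersection of $A$ with the kernel of $M_{g} - I$, a proper subgroup of $A$, so it has infinite index in $A$, and therefore infinite index in $G$ as well. More precisely, $\Fix(\psi) \cap A$ has rank strictly less than $d$, so its relative growth in $G$ is $O(n^{d-1})$ while $\size{\B(n)} = \Theta(n^{d})$ by \eqref{eqn:pol_bounds}. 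Since $\Fix(\psi)$ meets $A$ (a finite-index subgroup of $G$) in $\Fix(\psi)\cap A$, we get $\size{\Fix(\psi)\cap\B(n)} \le [G:A]\cdot\size{(\Fix(\psi)\cap A)\cap\B(n)} = O(n^{d-1})$. (Here one uses that each coset of $A$ contributes at most one translate of the rank-$(d{-}1)$ lattice; one should double-check the constant but the order of growth is what matters.) Summing over the at most $\size{\B(n)} = \Theta(n^{d})$ values of $g$, the numerator of \eqref{def:tdcsum} is $O(n^{2d-1})$, while the denominator is $\Theta(n^{2d})$, so $\tdc_{X}(\phi,G) = 0$. The same computation shows $\tdc_{Y}(\phi, G) = 0$ for any other finite generating set $Y$, since the exponent $d$ and the $\Theta$-bounds are generating-set independent (the rank of the finite-index free abelian subgroup is intrinsic), which gives the ``in particular'' clause in the contrapositive.

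For the other direction (step (1)): suppose $g_{0} \in G$ satisfies $\restrb{\inn{g_{0}}\circ\phi}{A} = \Id_{A}$. The aim is to find a coset $hA$ (equivalently a positive-density subset of $G$) on which $\Fix(\inn{h}\circ\phi)$ has finite index. The natural candidate is to show $A \le \Fix(\inn{h}\circ\phi)$ for every $h$ in the coset $g_{0}A$: indeed for $a \in A$, $\inn{g_{0}a}\circ\phi$ restricted to $A$ sends $b \mapsto g_{0}a\phi(b)\inv{a}\inv{g_{0}} = g_{0}\phi(b)\inv{g_{0}}$ (since $A$ is abelian and $\phi(b) \in A$) $= b$ by hypothesis. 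Hence for all $h \in g_{0}A$ we have $A \le \Fix(\inn{h}\circ\phi)$, so $\Fix(\inn{h}\circ\phi)$ has finite index in $G$, and by \cref{pepprop} (the finite-index case, together with the ball-ratio stability \eqref{eqn:ball_ratio}) $\size{\Fix(\inn{h}\circ\phi)\cap\B(n)} \sim \frac{1}{[G:\Fix(\inn{h}\circ\phi)]}\size{\B(n)} \ge \frac{1}{[G:A]}\size{\B(n)}$. Restricting the sum in \eqref{def:tdcsum} to $g \in g_{0}A \cap \B(n)$ — which itself has density $1/[G:A]$ by \cref{pepprop} — gives numerator $\gtrsim \frac{1}{[G:A]^{2}}\size{\B(n)}^{2}$, so $\tdc_{X}(\phi,G) \ge \frac{1}{[G:A]^{2}} > 0$.

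The main obstacle I expect is bookkeeping in step (2): carefully justifying that the fixed subgroup of a non-identity matrix on $\Z^{d}$ genuinely has rank $\le d-1$ (it is $\ker(M_{g}-I)\cap\Z^{d}$, and $M_{g}-I \ne 0$ means its kernel is a proper subspace of $\Q^{d}$, so this is fine), and then converting the relative-growth bound $O(n^{d-1})$ for this subgroup into a bound for $\Fix(\inn{g}\circ\phi)$ as a subgroup of $G$ — one must handle the possibility that $\Fix(\inn{g}\circ\phi)$ is strictly larger than $\Fix(\inn{g}\circ\phi)\cap A$ (it is contained in finitely many $A$-cosets, so at worst a bounded multiple). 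A cleaner alternative, avoiding matrices entirely, is to apply \cref{lem:FixedPointInfiniteIndexTFAbelianGroup} to the group $A$ (which has unique roots, being torsion-free abelian): since $\restrb{\inn{g}\circ\phi}{A} \ne \Id_{A}$, that lemma gives $[A : \Fix(\restrb{\inn{g}\circ\phi}{A})] = \infty$, hence $[G : \Fix(\inn{g}\circ\phi)] = \infty$, and then the infinite-index case of \cref{pepprop} directly yields $\lim_{n}\size{\Fix(\inn{g}\circ\phi)\cap\B(n)}/\size{\B(n)} = 0$ for each fixed $g$; a uniformity argument across the finitely many $A$-cosets (the fixed subgroup only depends on $g$ through $gA$, since $\inn{a}$ is trivial on $A$ and ... actually one should verify $\Fix(\inn{g}\circ\phi)$ changes controllably within a coset) upgrades this to a uniform-in-$g$ bound, and then dividing by $\size{\B(n)}^{2}$ finishes. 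I would present step (2) via this second route, as it sidesteps the explicit growth estimates and reuses machinery already set up in the paper.
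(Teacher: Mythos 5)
Your proposal is correct and follows essentially the same route as the paper's proof: the positive direction restricts the sum to the coset $g_0A$ and counts only fixed points inside $A$ (where $\restrb{\inn{x}\circ\phi}{A}=\Id_A$ holds for all $x\in g_0A$), and the negative direction applies \cref{lem:FixedPointInfiniteIndexTFAbelianGroup} to $A$ and polynomial growth to get an $O(n^{2r-1})$ bound on the numerator. The uniformity-in-$g$ worry you flag at the end is exactly what the paper's explicit decomposition into coset pairs $(g_iA,g_jA)$ handles: since $\Fix\bigl(\restrb{\inn{x}\circ\phi}{A}\bigr)$ depends only on the coset $xA$, there are finitely many infinite-index rank-$\le r-1$ subgroups of $A$ to control, and for each $j$ the intersection $\Fix(\inn{x}\circ\phi)\cap g_jA$ is at most one coset of that subgroup, yielding a bound $O(n^{r-1})$ with constant uniform in $x$.
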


\begin{proof}
	Let \(\{g_{1}, \ldots, g_{m}\}\) be a full set of representatives of the cosets of \(A\) in \(G\). 
	
	For the first implication, suppose that \(g \in G\) is such that \(\restrb{\inn{g} \circ \phi}{A} = \Id_{A}\). Let \(i \in \{1, \ldots, m\}\) be such that \(gA = g_{i}A\). To prove that \(\tdc_{X}(\phi, G) > 0\), it is sufficient to show that, for this fixed \(i\),
	\begin{equation}	\label{eq:sufficientInequalitytdcPositive}
		\limsup_{n \to \infty} \frac{\sum_{x \in g_{i}A \cap \B(n)} \size{\Fix(\inn{x} \circ \phi) \cap A \cap \B(n)}}{\size{\B(n)}^{2}} > 0.
	\end{equation}
	Indeed, since
	\[
		\frac{\sum_{x \in \B(n)} \size{\Fix(\inn{x} \circ \phi) \cap \B(n)}}{\size{\B(n)}^{2}} \ge \frac{\sum_{x \in g_{i}A \cap \B(n)} \size{\Fix(\inn{x} \circ \phi) \cap A \cap \B(n)}}{\size{\B(n)}^{2}}
	\]
	and the limsup of the left-hand side equals \(\tdc_{X}(\phi, G)\) by \eqref{eq:tdcInFixedPoints}, it is enough to prove \eqref{eq:sufficientInequalitytdcPositive}.

	For a fixed value of \(n\), consider the summation
	\[
		\sum_{x \in g_{i}A \cap \B(n)} \size{\Fix(\inn{x} \circ \phi) \cap A \cap \B(n)}.
	\]
	First, remark that \(\Fix(\inn{x} \circ \phi) \cap A = \Fix(\restrb{\inn{x} \circ \phi}{A})\) for all \(x \in G\). Hence, the summation becomes
	\[
		\sum_{x \in g_{i}A \cap \B(n)} \size{\Fix(\restrb{\inn{x} \circ \phi}{A}) \cap \B(n)}.
	\]
	Next, for each \(a \in A\), \(\inn{a}\) is the identity map on \(A\). Hence, for all \(x \in g_{i}A = gA\), we find that \(\inn{x} = \inn{g_{i}} = \inn{g}\) on \(A\). Consequently,
	\begin{equation*}
		\sum_{x \in g_{i}A \cap \B(n)} \size{\Fix(\restrb{\inn{x} \circ \phi}{A}) \cap \B(n)} = \sum_{x \in g_{i}A \cap \B(n)} \size{\Fix(\restrb{\inn{g} \circ \phi}{A}) \cap \B(n)},	
	\end{equation*}
	and since \(\restrb{\inn{g} \circ \phi}{A} = \Id_{A}\), this last summation reduces to
	\begin{align*}
		\sum_{x \in g_{i}A \cap \B(n)} \size{\Fix(\restrb{\inn{g} \circ \phi}{A}) \cap \B(n)}		&= \sum_{x \in g_{i}A \cap \B(n)} \size{A \cap \B(n)}	\\
		&= \size{g_{i}A \cap \B(n)}\size{A \cap \B(n)}.
	\end{align*}
	
	Dividing by \(\size{\B(n)}^{2}\) and taking limsup, we find that
	\begin{align*}
		&\limsup_{n \to \infty}\frac{\sum_{x \in g_{i}A \cap \B(n)} \size{\Fix(\inn{x} \circ \phi) \cap A \cap \B(n)}}{\size{\B(n)}^{2}}	\\
		&= \limsup_{n \to \infty} \frac{\size{g_{i}A \cap \B(n)}\size{A \cap \B(n)}}{\size{\B(n)}^{2}}	\\
		&= \frac{1}{[G : A]^{2}} > 0
	\end{align*}
	by \cref{pepprop}. Hence, \(\tdc_{X}(\phi, G) > 0\).
	
	Conversely, suppose that \(\restrb{\inn{g} \circ \phi}{A} \ne \Id_{A}\) for all \(g \in G\). We again start from
	\[
		\sum_{x \in \B(n)} \size{\Fix(\inn{x} \circ \phi) \cap \B(n)}
	\]
	and rewrite it as follows:
	\[
		\sum_{x \in \B(n)} \size{\Fix(\inn{x} \circ \phi) \cap \B(n)} = \sum_{i = 1}^{m} \sum_{j = 1}^{m} \sum_{x \in g_{i}A \cap \B(n)} \size{\Fix(\inn{x} \circ \phi) \cap g_{j}A \cap \B(n)}.
	\]
	Fix \(x \in G\) and \(j \in \{1, \ldots, m\}\). Since \(\Fix(\inn{x} \circ \phi)\) and \(g_{j}A\) are both cosets of subgroups of \(G\), their intersection is either empty or a coset of \(\Fix(\inn{x} \circ \phi) \cap A = \Fix(\restrb{\inn{x} \circ \phi}{A})\). For \(i, j \in \{1, \ldots, m\}\) and \(n \ge 1\), let \(A_{i, j, n}\) be the set of \(x \in g_{i}A \cap \B(n)\) such that \(\Fix(\inn{x} \circ \phi) \cap g_{j}A = y_{x, j} \Fix(\restrb{\inn{x} \circ \phi}{A})\) for some \(y_{x, j} \in G\). The summation then becomes
	\[
		\sum_{i = 1}^{m} \sum_{j = 1}^{m} \sum_{x \in g_{i}A \cap \B(n)} \size{\Fix(\inn{x} \circ \phi) \cap g_{j}A \cap \B(n)} = \sum_{i = 1}^{m} \sum_{j = 1}^{m} \sum_{x \in A_{i, j, n}} \size{y_{x, j} \Fix(\restrb{\inn{x} \circ \phi}{A}) \cap \B(n)}.
	\]
	Now, fix \(i, j \in \{1, \ldots, m\}\) and \(x \in A_{i, j, n}\). By \cref{lem:FixedPointInfiniteIndexTFAbelianGroup}, since torsion-free abelian groups have unique roots, \(\Fix(\restrb{\inn{x} \circ \phi}{A})\) has infinite index in \(A \cong \Z^{r}\) (for some \(r \ge 1\)), so it is free abelian of rank \(r_{x, i, j}< r\). In particular, \(r_{x, i, j} \le r - 1\). Hence,
	\[
		\size{y_{x, j} \Fix(\restrb{\inn{x} \circ \phi}{A}) \cap \B(n)} = O\left(n^{r - 1}\right).
	\]
	We also have that
	\[
		\size{A_{i, j, n}} \le \size{g_{i}A \cap \B(n)} = O(n^{r}).
	\]
	This implies that
	\begin{align*}
		\sum_{x \in A_{i, j, n}} \size{y_{x, j} \Fix(\restrb{\inn{x} \circ \phi}{A}) \cap \B(n)} = O\left(n^{2r - 1}\right),
	\end{align*}
	and thus
	\[
		\sum_{i = 1}^{m} \sum_{j = 1}^{m} \sum_{x \in A_{i, j, n}} \size{y_{x, j} \Fix(\restrb{\inn{x} \circ \phi}{A}) \cap \B(n)} = O\left(n^{2r - 1}\right),
	\]
	as it is a finite sum of functions that are all \(O\left(n^{2r - 1}\right)\).
	
	So, if we consider
	\begin{align*}
		\frac{\sum_{x \in \B(n)} \size{\Fix(\inn{x} \circ \phi) \cap \B(n)}}{\size{\B(n)}^{2}} 
	\end{align*}
	and use the fact that \(\size{\B(n)} = \Theta(n^{r})\), we find that
	\[
		\frac{\sum_{x \in \B(n)} \size{\Fix(\inn{x} \circ \phi) \cap \B(n)}}{\size{\B(n)}^{2}} = O\left(n^{-1}\right).
	\]
	Consequently, \(\tdc_{X}(\phi, G) = 0\).

	Finally, the existence of a \(g \in G\) such that \(\restrb{\inn{g} \circ \phi}{A} = \Id_{A}\) is independent of the generating set. 
	Therefore, the degree of twisted commutativity of \(G\) with respect to \(\phi\) being \(0\) is independent of the generating set as well.
\end{proof}

To end this section, we prove that the (first) equality in \cref{theo:tdcIsZeroForFGResFiniteGroupSubexpGrowthNonVirtuallyAbelian} holds for virtually abelian groups as well if \(\tdc_{X}(\phi, G) = 0\).
We first need some results on Reidemeister numbers and a lemma about matrices.

\begin{lem}[{\cite[Proposition~2.1]{Senden22}}]	\label{lem:ReidemeisterNumberFiniteAbelianGroup}
	Let \(G\) be a finite abelian group and \(\phi \in \End(G)\). Then \(R(\phi) = \size{\Fix(\phi)}\).
\end{lem}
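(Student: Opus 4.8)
The plan is to exploit the fact that $\phi$-conjugacy becomes additive in an abelian group. Writing $G$ additively, two elements $x, y \in G$ are $\phi$-conjugate precisely when $x = y + z - \phi(z)$ for some $z \in G$, i.e.\ when $x - y$ lies in the image of the map $\Id - \phi \colon G \to G$. The crucial (and only) place where commutativity is used is that $\Id - \phi$ is then a group homomorphism, so $\im(\Id - \phi)$ is an honest subgroup of $G$, and the $\phi$-conjugacy classes are exactly its cosets.

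From this, $R(\phi)$ is simply the number of cosets, namely $R(\phi) = [G : \im(\Id - \phi)]$. First I would record $|{\im(\Id - \phi)}| = \size{G} / |{\ker(\Id - \phi)}|$ by the first isomorphism theorem (using finiteness of $G$), and then observe that $\ker(\Id - \phi) = \{g \in G : \phi(g) = g\} = \Fix(\phi)$ directly from the definitions. Combining these gives
\[
	R(\phi) = [G : \im(\Id - \phi)] = \frac{\size{G}}{\size{G} / \size{\Fix(\phi)}} = \size{\Fix(\phi)},
\]
which is the claim.

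There is essentially no obstacle here: every step is a one-line verification, and the argument is purely elementary group theory once the observation that $\Id - \phi$ is a homomorphism is in place. The only point worth stating carefully is that this homomorphism property genuinely fails for non-abelian $G$, so the lemma really is special to the abelian setting (consistent with the more general \cref{prop:ReidemeisterNumberEqualsFixedConjugacyClasses}, which only counts $\phi$-fixed conjugacy classes rather than giving the clean formula $R(\phi) = \size{\Fix(\phi)}$).
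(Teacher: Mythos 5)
Your proof is correct, and it is the standard argument: in the abelian setting the map $\Id - \phi$ is a homomorphism, the $\phi$-conjugacy classes are precisely the cosets of $\im(\Id - \phi)$, and the first isomorphism theorem plus the identification $\ker(\Id - \phi) = \Fix(\phi)$ give $R(\phi) = [G : \im(\Id - \phi)] = \size{\Fix(\phi)}$. The paper does not prove this lemma but cites it from the reference, so there is no internal proof to compare against; your reconstruction is the natural one and every step checks out.
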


\begin{lem}[{\cite[Proposition~2.5.14]{Tertooy19}}]	\label{lem:BoundsReidemeisterNumberWithSummation}
	Let \(G\) be a group and \(N\) a finite index normal subgroup. Suppose that \(\phi \in \End(G)\) is such that \(\phi(N) \le N\). Let \(\overline{\phi}\) and \(\restr{\phi}{N}\) denote the induced endomorphisms on \(G / N\) and \(N\), respectively. Then
	\[
		\frac{1}{[G : N]} \sum_{gN \in G / N} R(\restrb{\inn{g} \circ \phi}{N}) \le R(\phi) \le \sum_{gN \in G / N} R(\restrb{\inn{g} \circ \phi}{N}).
	\]
\end{lem}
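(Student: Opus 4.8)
The plan is to study the quotient map $p \colon G \to Q := G/N$ (with $d := [G:N]$) via the map it induces on twisted conjugacy classes. First I would note that $x = zy\inv{\phi(z)}$ forces $p(x) = p(z)\,p(y)\,\inv{\overline\phi(p(z))}$, so $p$ induces a well-defined surjection $\Pi$ from the set of $\phi$-conjugacy classes of $G$ onto the set of $\overline\phi$-conjugacy classes of $Q$. Hence $R(\phi) = \sum_{C} \size{\Pi^{-1}(C)}$, the sum over the finitely many $\overline\phi$-classes $C$ of $Q$, and the whole statement follows once I sandwich each fibre $\size{\Pi^{-1}(C)}$ between $R(\restrb{\inn{g}\circ\phi}{N}) / \size{\Stab_{\overline\phi}(\bar x)}$ and $R(\restrb{\inn{g}\circ\phi}{N})$, for $g$ a lift of a representative $\bar x$ of $C$.

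The key local step is to realise each fibre as an orbit count inside one coset. Fix $x \in G$ with $\bar x := p(x)$ representing $C$, and set $T := \inv{p}(\Stab_{\overline\phi}(\bar x)) \le G$, so that $N \le T$ and $[T:N] = \size{\Stab_{\overline\phi}(\bar x)}$ (using $\Stab_{\overline\phi}(\bar x) = \Fix(\inn{\bar x}\circ\overline\phi)$, \cref{lem:fixstab}). I would verify: (a) every $\phi$-class lying over $C$ has a representative in $xN$, obtained by lifting a $Q$-level conjugator; and (b) two elements of $xN$ are $\phi$-conjugate in $G$ iff they lie in one orbit of the twisted action $z \cdot w = zw\inv{\phi(z)}$ of $T$ on $xN$. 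These give $\size{\Pi^{-1}(C)} = \#\{T\text{-orbits on } xN\}$. Transporting along the bijection $xN \to N$, $xn \mapsto xn\inv{x}$, the $N$-action becomes ordinary $\restrb{\inn{x}\circ\phi}{N}$-twisted conjugacy on $N$ (here $\phi(N)\le N$ and normality of $N$ make $\restrb{\inn{x}\circ\phi}{N}$ an endomorphism of $N$), so the number of $N$-orbits is exactly $R(\restrb{\inn{x}\circ\phi}{N})$; and since the group $T/N$ of order $\size{\Stab_{\overline\phi}(\bar x)}$ acts on the set of $N$-orbits with orbit space the $T$-orbits, the desired sandwich
\[
	\frac{R(\restrb{\inn{x}\circ\phi}{N})}{\size{\Stab_{\overline\phi}(\bar x)}} \le \size{\Pi^{-1}(C)} \le R(\restrb{\inn{x}\circ\phi}{N})
\]
follows.

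To globalise, I would invoke two standard invariance properties of Reidemeister numbers, each proved by transporting twisted classes along a suitable bijection of the group: $R(\psi) = R(\alpha\circ\psi)$ for $\alpha$ an inner automorphism, and $R(\psi) = R(\alpha\circ\psi\circ\inv{\alpha})$ for $\alpha$ any automorphism. The first shows $R(\restrb{\inn{g}\circ\phi}{N})$ depends only on $gN$ (so the right-hand sum in the statement is meaningful, since replacing $g$ by $gn_{0}$ post-composes $\restrb{\inn{g}\circ\phi}{N}$ with $\inn{gn_{0}\inv{g}}|_{N} \in \Inn(N)$), and the second shows $gN \mapsto R(\restrb{\inn{g}\circ\phi}{N})$ is constant on each $\overline\phi$-class $C$ — say with value $r_{C}$ — since $\bar h = \bar z\bar g\inv{\overline\phi(\bar z)}$ makes $\restrb{\inn{h}\circ\phi}{N}$ conjugate to $\restrb{\inn{g}\circ\phi}{N}$ by $\inn{z}|_{N}$. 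With $\size{C} = d/\size{\Stab_{\overline\phi}(\bar x_{C})}$ from orbit--stabiliser, summing the sandwich over $C$ gives
\[
	R(\phi) \le \sum_{C} r_{C} \le \sum_{C} \size{C}\, r_{C} = \sum_{gN \in Q} R(\restrb{\inn{g}\circ\phi}{N})
\]
and
\[
	R(\phi) \ge \sum_{C} \frac{r_{C}}{\size{\Stab_{\overline\phi}(\bar x_{C})}} = \frac{1}{d} \sum_{C} \size{C}\, r_{C} = \frac{1}{d} \sum_{gN \in Q} R(\restrb{\inn{g}\circ\phi}{N}),
\]
which is the claim.

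I expect the main obstacle to be organisational rather than deep: keeping straight which maps are endomorphisms of $N$ versus of $G$, getting the fibre-to-orbit reduction (a)--(b) exactly right, and applying the two $R$-invariance lemmas in the correct direction, both for well-definedness of the summand and for its constancy on $\overline\phi$-classes. The cleanest write-up isolates the single-fibre sandwich as a standalone step and only afterwards performs the summation with the orbit--stabiliser bookkeeping. (Alternatively, one may simply cite \cite[Proposition~2.5.14]{Tertooy19}, as done in the statement.)
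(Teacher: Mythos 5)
The paper does not prove this lemma; it simply cites \cite[Proposition~2.5.14]{Tertooy19}, so there is no in-paper argument to compare yours against. Your blind proof is correct and self-contained: the surjection $\Pi$ from $\phi$-classes of $G$ onto $\overline\phi$-classes of $G/N$ is well-defined, the fibre over $C=[\bar x]_{\overline\phi}$ is identified with the set of $T$-orbits on $xN$ where $T=p^{-1}(\Stab_{\overline\phi}(\bar x))$, the bijection $xn\mapsto xnx^{-1}$ (using normality of $N$) transports the twisted $N$-action to ordinary $\restrb{\inn{x}\circ\phi}{N}$-twisted conjugacy so that the number of $N$-orbits is $R(\restrb{\inn{x}\circ\phi}{N})$, and the finite group $T/N$ of order $\size{\Stab_{\overline\phi}(\bar x)}$ acting on these $N$-orbits gives the local sandwich. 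The two Reidemeister invariances you invoke ($R(\inn{m}\circ\psi)=R(\psi)$ for $m$ in the domain, via $a\mapsto am$; and $R(\alpha\psi\alpha^{-1})=R(\psi)$ for $\alpha\in\Aut$, via $a\mapsto\alpha^{-1}(a)$) are exactly what is needed for well-definedness and constancy of the summand on each $\overline\phi$-class, using $\inn{\phi(z)}^{-1}\circ\phi=\phi\circ\inn{z}^{-1}$, which holds for any endomorphism $\phi$. The orbit--stabiliser bookkeeping $\size{C}\cdot\size{\Stab_{\overline\phi}(\bar x_C)}=[G:N]$ then yields both bounds, also in the degenerate case where some $r_C=\infty$. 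This is the standard proof of the so-called addition/averaging bounds for Reidemeister numbers and is presumably the argument in the cited source.
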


\begin{lem}	\label{lem:MatrixEqualIModp}
	Let \(r \ge 1\) be an integer and let \(M \in \Z^{r \times r}\). If \(M \equiv I \bmod p\) for infinitely many primes \(p\), then \(M = I\).
\end{lem}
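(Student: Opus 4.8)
The statement to prove is \cref{lem:MatrixEqualIModp}: if $M \in \Z^{r \times r}$ satisfies $M \equiv I \bmod p$ for infinitely many primes $p$, then $M = I$. The plan is to argue entrywise. Fix any entry position $(i, j)$ and let $a$ denote the $(i,j)$-entry of $M - I$, so $a \in \Z$. The hypothesis that $M \equiv I \bmod p$ means precisely that $p \mid a$ for each of the infinitely many primes $p$ in question. But a nonzero integer has only finitely many prime divisors, so if $a$ were nonzero it could be divisible by only finitely many primes, contradicting the fact that infinitely many primes divide it. Hence $a = 0$, i.e.\ the $(i,j)$-entry of $M$ equals the $(i,j)$-entry of $I$.

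Since $(i,j)$ was an arbitrary pair of indices in $\{1, \ldots, r\}^2$, every entry of $M$ agrees with the corresponding entry of $I$, and therefore $M = I$.

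There is essentially no obstacle here: the only fact used is that a nonzero integer has finitely many prime factors (equivalently, $\bigcap_{p} p\Z = \{0\}$, the intersection over all primes). The result is a routine number-theoretic observation, and the proof is a one-line application of it to each matrix entry. In the write-up I would simply state the entrywise reduction and invoke this elementary fact directly, without belabouring it.
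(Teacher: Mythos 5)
Your proof is correct and takes essentially the same approach as the paper's: the paper argues entrywise (splitting into off-diagonal entries congruent to $0$ and diagonal entries congruent to $1$), while you phrase it slightly more uniformly by considering $M - I$, but the underlying idea — a nonzero integer has only finitely many prime divisors — is identical.
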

\begin{proof}
	The condition on \(M\) implies that the off-diagonal entries of \(M\) are congruent to \(0\) modulo infinitely many primes.
	Hence, they must be identically \(0\).
	Similarly, the diagonal entries of \(M\) are congruent to \(1\) modulo infinitely many primes, so they must be identically \(1\).
\end{proof}

The following is an analogue of \cref{theo:tdcIsZeroForFGResFiniteGroupSubexpGrowthNonVirtuallyAbelian} for virtually abelian groups.

\begin{thm}	\label{theo:InfimumTdcFiniteQuotientsVirtuallyAbelian}
	Let \(G\) be an infinite, \fg{} virtually abelian group and let \(A\) be a \tf{} abelian fully invariant subgroup of finite index in \(G\). Let \(\phi \in \End(G)\). Suppose that \(\restrb{\inn{g} \circ \phi}{A} \ne \Id_{A}\) for all \(g \in G\). Then
	\[
		\inf\{\tdc(\overline{\phi}, G / N) \mid N \lhd_{f} G, \phi(N) \le N, \overline{\phi} \text{ induced endomorphism on } G / N\} = 0.
	\]
In particular, this infimum is also equal to \(\tdc_{X}(\phi, G)\) for every generating set \(X\) of \(G\).
\end{thm}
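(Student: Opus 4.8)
The plan is to run everything through the identity $\tdc(\overline{\phi}, G/N) = R(\overline{\phi})/[G:N]$ from \cref{prop:tdcFiniteGroupsReidemeisterNumber} and to produce an explicit cofinal family of admissible $N$ along which this ratio tends to $0$. Write $A \cong \Z^{r}$, set $m \coloneq [G:A]$, fix coset representatives $g_{1}, \dots, g_{m}$ of $A$ in $G$, and let $M_{i} \in \Z^{r \times r}$ be the integer matrix of the endomorphism $\restrb{\inn{g_{i}} \circ \phi}{A}$ of $A$; this is well defined because $A$ is fully invariant (so $\phi(A) \le A$) and normal (so $\inn{g_{i}}(A) = A$). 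Since $A$ is abelian, $\inn{g} = \inn{g_{i}}$ on $A$ whenever $gA = g_{i}A$, so the standing hypothesis ``$\restrb{\inn{g} \circ \phi}{A} \ne \Id_{A}$ for all $g$'' is equivalent to ``$M_{i} \ne I$ for every $i \in \{1, \dots, m\}$''.

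The admissible subgroups I would use are $N_{p} \coloneq pA$ for $p$ a prime: $N_{p}$ is characteristic in $A$, hence normal in $G$; it satisfies $\phi(N_{p}) \le N_{p}$ because $A$ is fully invariant; and $[G : N_{p}] = m p^{r} < \infty$, so it qualifies. In $G/N_{p}$ the subgroup $A/N_{p} \cong (\ZpZ)^{r}$ is finite abelian and normal with $(G/N_{p})/(A/N_{p}) \cong G/A$ of order $m$, so the upper bound of \cref{lem:BoundsReidemeisterNumberWithSummation} together with \cref{lem:ReidemeisterNumberFiniteAbelianGroup} gives
\[
	R(\overline{\phi}) \le \sum_{i=1}^{m} \size{\Fix\!\left(\restrb{\inn{g_{i}} \circ \overline{\phi}}{A/N_{p}}\right)} = \sum_{i=1}^{m} \size{\ker\!\big((M_{i}-I) \bmod p\big)} = \sum_{i=1}^{m} p^{\,r - \rank_{\F_{p}}(M_{i}-I)},
\]
since the endomorphism induced on $A/N_{p}$ by $\restrb{\inn{g_{i}} \circ \phi}{A}$ is multiplication by $M_{i} \bmod p$. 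Because $M_{i} - I \ne 0$, only finitely many primes divide every entry of $M_{i} - I$ (equivalently, the contrapositive of \cref{lem:MatrixEqualIModp}), so for all but finitely many primes $p$ we have $\rank_{\F_{p}}(M_{i} - I) \ge 1$ for every $i$ simultaneously. For such $p$, $R(\overline{\phi}) \le m\, p^{r-1}$ and hence $\tdc(\overline{\phi}, G/N_{p}) = R(\overline{\phi})/[G:N_{p}] \le \frac{m p^{r-1}}{m p^{r}} = \frac{1}{p} \to 0$ as $p \to \infty$, which shows the infimum in the statement equals $0$.

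For the final assertion, $G$ is virtually abelian, hence of polynomial and therefore stable subexponential growth, so \cref{prop:tdcOnFiniteQuotientIsLarger} gives $\tdc_{X}(\phi, G) \le \tdc(\overline{\phi}, G/N)$ for every admissible $N$ and every finite generating set $X$; taking the infimum over such $N$ and using the previous paragraph forces $0 \le \tdc_{X}(\phi, G) \le 0$, so $\tdc_{X}(\phi, G) = 0$ and this is the value of the infimum (consistently with \cref{thm:CharacterisationTdcNonZeroVirtuallyAbelian}).

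The argument is fairly mechanical once the three Reidemeister-number inputs are in place; the point requiring care is the reduction modulo $p$. One has to restrict attention to subgroups $N$ sitting inside $A$ — rather than arbitrary $\phi$-invariant finite-index subgroups of $G$ — precisely so that the endomorphism induced on the finite abelian layer $A/N$ is literally the reduction of the integer matrices $M_{i}$, and then one must check that a single large prime $p$ keeps all $m$ matrices $M_{i} - I$ nonzero modulo $p$ at once. I do not anticipate any real difficulty beyond this bookkeeping.
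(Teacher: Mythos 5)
Your proof is correct and follows essentially the same route as the paper's: you take $N_{p} = pA$ (the paper writes this multiplicatively as $A^{p}$), reduce the integer matrices of $\restrb{\inn{g_{i}}\circ\phi}{A}$ modulo $p$, invoke \cref{lem:MatrixEqualIModp} to exclude finitely many bad primes so that every $M_{i}-I$ remains nonzero mod $p$, bound $R(\overline{\phi})$ via \cref{lem:BoundsReidemeisterNumberWithSummation} and \cref{lem:ReidemeisterNumberFiniteAbelianGroup} to get $\tdc(\overline{\phi},G/N_{p})\le 1/p$, and finish with \cref{prop:tdcOnFiniteQuotientIsLarger}. The only cosmetic difference is that you package the fixed-point count as $p^{\,r-\rank_{\F_{p}}(M_{i}-I)}$ where the paper simply says a proper subgroup of $(\ZpZ)^{r}$ has order at most $p^{r-1}$.
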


\begin{proof}
	By \cref{prop:tdcFiniteGroupsReidemeisterNumber}, \(\tdc(\overline{\phi}, G / N) = R(\overline{\phi}) / [G : N]\) for each normal \(\phi\)-invariant subgroup of finite index of \(G\). We will argue that we can get this ratio arbitrarily small for a given \(\phi \in \End(G)\).
	
	Let \(\phi \in \End(G)\) be arbitrary and consider, for any prime number \(p\), the group \(A^{p} \coloneq \grpgen{a^{p} \mid a \in A}\). Then \(A^{p}\) is fully invariant in \(A\), which itself is fully invariant in \(G\). Consequently, \(A^{p}\) is fully invariant in \(G\). Let \(\overline{\phi}_{p}\) denote the induced endomorphism on \(G / A^{p}\) and \(\overline{\phi}_{p, A}\) the one on \(A / A^{p}\). Since \(A \cong \Z^{r}\) for some \(r \ge 1\), we can represent the endomorphism \(\restr{\phi}{A}\) by a matrix \(M \in \Z^{r \times r}\). The quotient group \(A / A^{p}\) is then isomorphic to \((\ZpZ)^{r}\), so the matrix representation of \(\overline{\phi}_{p, A}\) is given by \(M \bmod p\).
	
	We want to show that \(R(\overline{\phi}_{p}) / [G : A^{p}] \le \frac{1}{p}\) for infinitely many prime numbers \(p\). To that end, we claim that the set
	\[
		\P \coloneq \{p \text{ prime} \mid \forall g \in G: C_{g}M \not\equiv I \bmod p\}
	\]
	is infinite, where for each \(g \in G\), \(C_{g}\) is the matrix representation of the inner automorphism \(\inn{g}\) restricted to \(A\).
	
	Let \(\{g_{1}, \ldots, g_{m}\}\) be a complete set of coset representatives of \(A\) in \(G\). Since \(\restr{\inn{a}}{A} = \Id_{A}\) for each \(a \in A\), it follows that
	\[
		\restr{\inn{g_{i}a}}{A} = \restr{\inn{g_{i}}}{A}
	\]
	for all \(a \in A\) and \(i \in \{1, \ldots, m\}\). Hence, \(C_{g_{i}a} = C_{g_{i}}\) for all \(a \in A\) and \(i \in \{1, \ldots, m\}\), so  
	\[
		\P = \{p \text{ prime} \mid \forall i \in \{1, \ldots, m\}: C_{g_{i}}M \not\equiv I \bmod p\}.
	\]
	
	Let \(i \in \{1, \ldots, m\}\) be arbitrary. As \(\restrb{\inn{g_{i}} \circ \phi}{A}\) is not the identity on \(A\), \cref{lem:MatrixEqualIModp} implies that there are only finitely many primes \(p\) for which \(C_{g_{i}}M \equiv I \bmod p\). Hence, \(\P\) is a finite intersection of co-finite sets, so it is itself co-finite. In particular, \(\P\) is infinite.
	
	Next, fix \(p \in \P\). Then for each \(g \in G\), the map \(\inn{gA^{p}} \circ \overline{\phi}_{p, A}\) is not the identity on \(A / A^{p}\), as \(C_{g}M \not\equiv I \bmod p\). Thus \(\Fix(\inn{gA^{p}} \circ \overline{\phi}_{p, A})\) is a proper subgroup of \(A / A^{p}\), so it has size at most \(p^{r - 1}\). As \(R(\inn{gA^{p}} \circ \overline{\phi}_{p, A}) = \size{\Fix(\inn{gA^{p}} \circ \overline{\phi}_{p, A})}\) by \cref{lem:ReidemeisterNumberFiniteAbelianGroup}, we find that
	\[
		R(\inn{gA^{p}} \circ \overline{\phi}_{p, A}) \le p^{r - 1}
	\]
	for every \(g \in G\).
	
	By \cref{lem:BoundsReidemeisterNumberWithSummation} applied to \(G / A^{p}\) and \(A / A^{p}\),
	\[
		R(\overline{\phi}_{p}) \le \sum_{gA \in G / A} R(\inn{gA^{p}} \circ \overline{\phi}_{p, A}) \le [G : A] p^{r - 1}.
	\]
	Here we use \(\frac{G / A^{p}}{A / A^{p}} \cong G / A\) to simplify the notation in the summation.
	This way, we get
	\[
		\tdc(\overline{\phi}_{p}, G / A^{p}) = \frac{R(\overline{\phi}_{p})}{[G : A^{p}]} \le \frac{[G : A] p^{r - 1}}{[G : A][A : A^{p}]} = \frac{p^{r - 1}}{p^{r}} = \frac{1}{p}.
	\]
	
	Since \(\P\) is infinite, we can make this ratio arbitrary small. This proves that
	\[
		\inf\{\tdc(\overline{\phi}, G / N) \mid N \lhd_{f} G, \phi(N) \le N, \overline{\phi} \text{ induced endomorphism on } G / N\} = 0.
	\]
	
	Finally, \cref{prop:tdcOnFiniteQuotientIsLarger} implies that
	\begin{align*}
		\tdc_{X}(\phi, G) &\le \inf\{\tdc(\overline{\phi}, G / N) \mid N \lhd_{f} G, \phi(N) \le N, \overline{\phi} \text{ induced endomorphism on } G / N\}	\\
			&= 0.
	\end{align*}
	Since \(\tdc_{X}(\phi, G)\) is non-negative, equality follows.
	Note that this matches with \cref{thm:CharacterisationTdcNonZeroVirtuallyAbelian}, as \(\restrb{\inn{g} \circ \phi}{A} \ne \Id_{A}\) for all \(g \in G\).
\end{proof}

\section{The twisted conjugacy ratio in groups}
We begin this section by recalling the notion of conjugacy ratio from \cite{CCM19}.
Let \(G\) be a finitely generated group with generating set $X$. The \emph{conjugacy ratio} of \(G\) with respect to \(X\) is defined as
\[
	\CR_{X}(G) \coloneq \limsup\limits_{n\rightarrow \infty} \frac{|C_{G,X}(n)|}{|\mathbb{B}_{G,X}(n)|}.
\]
We generalise this to the notion of \emph{twisted conjugacy ratio}.
\begin{defn}
    Let \(G\) be a finitely generated group and let \(X\) be a generating set. For \(\phi \in \mathrm{End}(G)\), let \(C^{\phi}_{G,X}(n)\) denote the set of \(\phi\)-conjugacy classes of \(G\) which have a representative in \(\mathbb{B}_{G,X}(n)\). The \emph{\(\phi\)-twisted conjugacy ratio} of \(G\) with respect to \(X\) is defined as 
    \[
    	\tcr_{X}(\phi, G) \coloneq \limsup\limits_{n\rightarrow \infty} \frac{|C^{\phi}_{G,X}(n)|}{|\mathbb{B}_{G,X}(n)|}.
    \]
\end{defn}

\begin{rem}\label{rmk:finite groups tcr tdc}
    For a finite group $F$, \(\tcr_{X}(\phi, F)\) does not depend on its generating set \(X\) and, moreover, by \cref{prop:tdcFiniteGroupsReidemeisterNumber} we get
    \[
    	\tcr(\phi, F) = \frac{R(\phi)}{\size{F}} = \tdc(\phi, F).
    \]
   
\end{rem}

For infinite groups, we can restrict our investigations to endomorphisms with infinite Reidemeister number.

\begin{lem}
	Let \(G\) be an infinite, finitely generated group and let \(\phi \in \End(G)\).
	Suppose that \(R(\phi) < \infty\).
	Then \(\tcr_{X}(\phi, G) = 0\) for every generating set \(X\).
\end{lem}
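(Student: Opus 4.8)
The plan is to observe that if $R(\phi) < \infty$, then there are only finitely many $\phi$-conjugacy classes in $G$, hence $|C^{\phi}_{G,X}(n)| \le R(\phi)$ for every $n \ge 0$. Since $G$ is infinite and finitely generated, $|\B_{G,X}(n)| \to \infty$ as $n \to \infty$. The quotient $|C^{\phi}_{G,X}(n)| / |\B_{G,X}(n)|$ is therefore bounded above by $R(\phi) / |\B_{G,X}(n)|$, which tends to $0$. Taking $\limsup$ gives $\tcr_{X}(\phi, G) = 0$, and since the bound $R(\phi) / |\B_{G,X}(n)|$ does not reference $X$ beyond the ball sizes, the conclusion holds for every finite generating set $X$.

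In more detail, first I would note that $C^{\phi}_{G,X}(n)$ is, by definition, a subset of the set of all $\phi$-conjugacy classes of $G$, which has cardinality $R(\phi)$; so $|C^{\phi}_{G,X}(n)| \le R(\phi) < \infty$ independently of $n$. Next I would invoke the standard fact that an infinite finitely generated group has unbounded balls: since $G = \bigcup_{n} \B_{G,X}(n)$ and $G$ is infinite while each ball is finite, we must have $|\B_{G,X}(n)| \to \infty$. Combining these two facts, $0 \le |C^{\phi}_{G,X}(n)| / |\B_{G,X}(n)| \le R(\phi) / |\B_{G,X}(n)| \to 0$, so the $\limsup$ defining $\tcr_{X}(\phi, G)$ is $0$.

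There is no real obstacle here; the statement is essentially immediate from the definitions once one records that a finite Reidemeister number caps the numerator while infiniteness of $G$ forces the denominator to diverge. The only point worth stating carefully is that the argument is uniform in $X$: the bound $R(\phi)/|\B_{G,X}(n)|$ depends on the generating set only through the (always diverging) ball sizes, so the vanishing of $\tcr_{X}(\phi, G)$ holds for every finite generating set $X$ of $G$.
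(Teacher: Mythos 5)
Your proof is correct and follows the same argument as the paper: bound $|C^{\phi}_{G,X}(n)|$ above by $R(\phi)$, note that $|\B_{G,X}(n)|\to\infty$ since $G$ is infinite, and conclude that the ratio vanishes.
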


\begin{proof}
	Since \(|C_{G, X}^{\phi}(n)| \le R(\phi) < \infty\) for all \(n \ge 1\) and
	 \(\B_{G, X}(n)\) grows unboundedly, it follows that
	\[
		\tcr_{X}(\phi, G) = \limsup\limits_{n\rightarrow \infty} \frac{|C^{\phi}_{G,X}(n)|}{|\mathbb{B}_{G,X}(n)|} \le \limsup\limits_{n\rightarrow \infty} \frac{R(\phi)}{|\mathbb{B}_{G,X}(n)|} = 0.
	\]
\end{proof}

\subsection{Groups of stable subexponential growth} \label{sec:tcr_VirtuallyAbelian}

We first consider virtually abelian groups, and generalise \cite[Proposition 3.4]{CCM19}.
\begin{thm}\label{prop:tcrNonZeroVirtuallyAbelianGroups}
    Let \(G\) be a finitely generated virtually abelian group and \(A\) an abelian normal subgroup such that \([G : A] = m\).
    Let \(\phi \in \End(G)\) be such that \(\phi(A) \le A\) and \(X\) be a generating set of \(G\).
    Suppose there exists \(g \in G\) such that \(\restrb{\inn{g} \circ \phi}{A} = \Id_{A}\).
    Then \(\tcr_{X}(\phi, G) > 0\).
    
    More precisely, if \(g_{1}, \ldots, g_{m}\) are coset representatives of \(G / A\) and there exists an integer \(k \ge 1\) such that 
    \begin{enumerate}[(i)]
    	\item \(\restrb{\inn{g_{i}} \circ \phi}{A} = \Id_{A}\) for \(i \in \{1, \ldots, k\}\), and
	\item \([g_{i}A]_{\bar{\phi}} \ne [g_{j}A]_{\bar{\phi}}\) for \(i, j \in \{1, \ldots, k\}\) with \(i \ne j\), where \(\bar{\phi}\colon G / A \to G / A: gA \mapsto \phi(g)A\),
    \end{enumerate}
    then \(\mathrm{tcr}_{X}(\phi, G) \ge \frac{k}{m^{2}}\). 
\end{thm}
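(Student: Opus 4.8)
The goal is to show that twisted conjugacy classes in $G$ are, on average, not too big, so that their count in a ball is a fixed positive fraction of the ball size. The key structural fact is that if $\restrb{\inn{g_i} \circ \phi}{A} = \Id_A$, then conjugating elements within the coset $g_i A$ by elements of $A$ has a controlled effect. More precisely, for $x \in G$ and $a \in A$, $ax\inv{\phi(a)} = ax\inv{\phi(a)}$, and when $x \in g_i A$ with $i \le k$, we have $\phi(a) = \inv{g_i}a g_i$ (since $\inn{g_i}\circ\phi$ restricted to $A$ is the identity means $\phi(a) = \inv{g_i}ag_i$), so $ax\inv{\phi(a)} = ax\inv{g_i}\inv{a}g_i$. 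The upshot I would aim for: the $\phi$-conjugacy class of any $x \in g_i A$ (for $i\le k$) has bounded intersection with $g_i A$ — in fact one should be able to show that two elements $x, x' \in g_i A$ that differ by an element of a suitable finite-index subgroup of $A$ are $\phi$-conjugate only if they are equal, or conversely that the orbit of $x$ under $A$-twisted-conjugation inside $g_iA$ is ``small'' (contained in finitely many cosets of something of infinite index, or of size $O(n^{r-1})$ in the ball of radius $n$).

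First I would fix $A$ to be the given abelian normal subgroup, write $r$ for the torsion-free rank (passing to a torsion-free finite-index subgroup of $A$ if needed, though since $A$ need only be abelian here one should be slightly careful — perhaps reduce to the torsion-free case, or just work with $A \cong \Z^r \times (\text{finite})$). Then I would count: $|C^\phi_{G,X}(n)| \ge$ (number of $\phi$-conjugacy classes meeting $\B(n) \cap (g_1A \cup \cdots \cup g_kA)$). For each such class I would bound the number of its representatives inside $\B(n) \cap g_iA$ from above, showing it is $O(n^{r-1})$ while $|g_iA \cap \B(n)| = \Theta(n^r)$ by \cref{pepprop} and Bass's estimate \eqref{eqn:pol_bounds}. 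Dividing, the number of classes meeting $g_iA \cap \B(n)$ is at least $\Theta(n^r)/O(n^{r-1})$... no — that over-counts; instead I want: the number of distinct classes is at least $|g_iA \cap \B(n)|$ divided by the maximal class-representative count, which gives $\Theta(n^r)/O(n^{r-1}) \to \infty$, not a positive ratio. So the right bound is the other way: I must show each class has $O(n^{r-1})$ representatives in $g_iA \cap \B(n)$ is \emph{too weak}; what I actually need is that the twisted-conjugation action of $G$ on $g_iA$ factors through a finite group action up to infinite-index slippage, i.e. the class of $x$ meets $g_iA$ in a union of cosets of an infinite-index (rank $\le r-1$) subgroup, with at most $m$ such cosets. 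Then $|[x]_\phi \cap g_iA \cap \B(n)| = O(n^{r-1})$, and summing the indicator over a transversal shows $|C^\phi_{G,X}(n)| \ge \sum_{i=1}^k |g_iA \cap \B(n)| / O(n^{r-1})$, which still blows up. Let me restate correctly: I should show $|[x]_\phi \cap g_iA|$ is \emph{finite} (bounded independent of $x$), not just polynomially small — this is the crux.

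The clean statement I would prove is: \textbf{for $i \le k$, and $x, x' \in g_iA$, if $x' \in [x]_\phi$ then $x' = z x \inv{\phi(z)}$ for some $z$, and writing $z = g_\ell a$ one computes that $x' x^{-1} \in$ (a specific finite set depending only on $g_1,\dots,g_m$ and $x$ modulo a finite-index subgroup).} Concretely: $zx\inv{\phi(z)}$ with $z = g_\ell a$ gives, modulo $A$, the coset $g_\ell g_i \inv{\phi(g_\ell)} A$, which must equal $g_i A$; this restricts $\ell$ to a subset. For those $\ell$, and with $x = g_i b$ ($b \in A$), $zx\inv{\phi(z)} = g_\ell a g_i b \inv{\phi(a)}\inv{\phi(g_\ell)}$; pushing the $A$-parts around using normality of $A$ and the hypothesis $\phi(a) = \inv{g_i}ag_i$ on $A$, the dependence on $a$ should collapse (the $a$ and $\inv{\phi(a)} = \inv{g_i}\inv{a}g_i$ conjugated into $A$ cancel up to a commutator that lies in a fixed subgroup), leaving finitely many possible values of $x'$. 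Hence $|[x]_\phi \cap g_iA| \le c$ for a constant $c = c(G)$, so $|C^\phi_{G,X}(n)| \ge \frac{1}{c}\sum_{i=1}^k |g_iA \cap \B(n)| \sim \frac{k}{cm}|\B(n)|$ by \cref{pepprop}. A more careful bookkeeping — which is exactly what is done in the untwisted \cite[Proposition 3.4]{CCM19} — should replace $\frac{1}{cm}$ by $\frac{1}{m^2}$, giving the stated $\tcr_X(\phi,G) \ge \frac{k}{m^2}$: here one uses that an element of $g_iA$ is $\phi$-conjugate to an element of $g_jA$ only for $j$ in an orbit of size dividing $m$, and within $g_iA$ the twisted-conjugation by $A$ is trivial (by the hypothesis), so the $\phi$-classes restricted to $g_iA$ are exactly the orbits of the residual $G/A$-action, of which there are at least $|A \cap \B(n)|/(\text{something}) \cdot \frac{1}{m}$.

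\textbf{Main obstacle.} The delicate point is proving that twisted conjugation by $A$ acts trivially (or near-trivially) on each $g_iA$, $i \le k$, \emph{and} controlling how elements of different cosets $g_iA$, $g_jA$ get merged into a single $\phi$-class — this inter-coset merging is what produces the $m^2$ rather than $m$ in the denominator. Getting the constant exactly $\frac{k}{m^2}$ requires mimicking the counting in \cite[Proposition 3.4]{CCM19} carefully with the twist inserted: one shows that each $\phi$-conjugacy class with a representative in $\B(n)$ and meeting $\bigcup_{i\le k} g_iA$ contributes, and that the map (class) $\mapsto$ (its representatives in $g_iA \cap \B(n-C)$ for the relevant $i$) is at-most-$m$-to-one onto a set of size $\sim \frac{1}{m}|A \cap \B(n)| = \frac{1}{m^2}|\B(n)|$ after accounting for the coset count. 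The endomorphism $\phi$ may fail to be injective, so I would double-check that $\inv{\phi(z)}$ makes sense (it does: $\phi(z)^{-1}$, not $\phi^{-1}$) and that no step secretly assumes $\phi$ injective; the only genuinely new input beyond \cite{CCM19} is \cref{lem:fixstab} / the hypothesis $\restrb{\inn{g_i}\circ\phi}{A} = \Id_A$, which is precisely the twisted analogue of ``$A$ central enough'', and everything else is the \cite{CCM19} argument with $\phi$ carried along.
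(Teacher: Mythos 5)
Your proposal, after the self-corrections, lands on exactly the paper's argument: use the hypothesis $\restrb{\inn{g_i}\circ\phi}{A}=\Id_A$ to eliminate the $A$-part of the conjugator $z=g_j a$ via $ag_i = g_i\phi(a)$ and the abelianity of $A$, deduce that each $\phi$-class meets $g_iA$ in at most $m$ elements, then sum $\frac{1}{m}\size{g_iA\cap\B(n)}$ over $i\le k$ and apply \cref{pepprop} to get $\frac{k}{m^2}$. The ``bookkeeping'' you defer to is precisely the short computation $g_j(ag_i)\tilde a\inv{\phi(a)}\inv{\phi(g_j)} = g_jg_i\phi(a)\tilde a\inv{\phi(a)}\inv{\phi(g_j)} = g_j\tilde h\inv{\phi(g_j)}$, which needs no commutator correction and no torsion-freeness of $A$.
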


\begin{proof}	
	Fix \(n \ge 1\) and let \(k\) be as in the statement.
	We start by noting that
	\begin{align}
		\size{C^{\phi}_{G, X}(n)}	&=	\size{\{[g]_{\phi} \mid g \in \B_{G, X}(n)\}}	\nonumber\\
							&=	\size{\bigcup_{i = 1}^{m} \{[g]_{\phi} \mid g \in g_{i}A \cap \B_{G, X}(n)\}}	\nonumber	\\
							&\geq \size{\bigcup_{i = 1}^{k} \{[g]_{\phi} \mid g \in g_{i}A \cap \B_{G, X}(n)\}}	\label{eq:SubdivisionIntoDisjointParts}
	\end{align}
	We first prove that the last union is disjoint.
	
	Let \(i, j \in \{1, \ldots, k\}\) with \(i \ne j\).
	Then \([g_{i}A]_{\bar{\phi}} \ne [g_{j}A]_{\bar{\phi}}\).
	Note that, for \(g, h \in G\), the equality \([g]_{\phi} = [h]_{\phi}\) implies the equality \([gA]_{\bar{\phi}} = [hA]_{\bar{\phi}}\).
	Hence, contraposition implies that \([g_{i}a]_{\phi} \ne [g_{j}b]_{\phi}\) for \(a, b \in A\).
	Thus, \(\{[g]_{\phi} \mid g \in g_{i}A \cap \B_{G, X}(n)\}\) and \(\{[g]_{\phi} \mid g \in g_{j}A \cap \B_{G, X}(n)\}\) are disjoint.
	
	Next, we provide a lower bound on \(\size{\{[g]_{\phi} \mid g \in g_{i}A \cap \B_{G, X}(n)\}}\) for \(i \in \{1, \ldots, k\}\).
	Let \(i \in \{1, \ldots, k\}\).
	Then \(\restrb{\inn{g_{i}} \circ \phi}{A} = \Id_{A}\).
	Let \(h, \tilde{h} \in g_{i}A \cap \B_{G, X}(n)\) be such that \([h]_{\phi} = [\tilde{h}]_{\phi}\).
	Then there exists a \(g \in G\) such that \(h = g \tilde{h} \inv{\phi(g)}\).
	Write \(\tilde{h} = g_{i}\tilde{a}\) and \(g = g_{j}a\) for some \(a, \tilde{a} \in A\) and \(j \in \{1, \ldots, m\}\).
	Then
	\begin{align*}
		h	&=	g \tilde{h} \inv{\phi(g)}	\\
			&=	g_{j} a g_{i} \tilde{a} \inv{\phi(a)} \inv{\phi(g_{j})}	.
	\end{align*}
	Recall that \(\restrb{\inn{g_{i}} \circ \phi}{A} = \Id_{A}\).
	This means that \(a = g_{i} \phi(a) \inv{g_{i}}\), and thus 
	\[
		ag_{i} = g_{i} \phi(a).
	\]
	Therefore,
	\begin{align*}
		h	&=	g \tilde{h} \inv{\phi(g)}	\\
			&=	g_{j} (a g_{i}) \tilde{a} \inv{\phi(a)} \inv{\phi(g_{j})}	\\
			&=	g_{j} g_{i} \phi(a) \tilde{a} \inv{\phi(a)} \inv{\phi(g_{j})}	\\
			&=	g_{j} g_{i} \tilde{a} \inv{\phi(g_{j})}	\\
			&=	 g_{j} \tilde{h} \inv{\phi(g_{j})},
	\end{align*}
	where we used that \(\phi(a) \tilde{a} \inv{\phi(a)} = \tilde{a}\) since \(A\) is abelian. To summarise, for \(h, \tilde{h} \in g_{i}A \cap \B_{G, X}(n)\), we have \([h]_{\phi} = [\tilde{h}]_{\phi}\) if and only if \(h = g_{j} \tilde{h} \inv{\phi(g_{j})}\) for some \(j \in \{1, \ldots, m\}\).
	In other words, each \(\phi\)-conjugacy class in
	\[
		\{[g]_{\phi} \mid g \in g_{i}A \cap \B_{G, X}(n)\}
	\]
	occurs at most \(m\) times.
	This implies that
	\[
		\size{\{[g]_{\phi} \mid g \in g_{i}A \cap \B_{G, X}(n)\}} \ge \frac{\size{g_{i}A \cap \B_{G, X}(n)}}{m}.
	\]
		
	Going back to \eqref{eq:SubdivisionIntoDisjointParts}, we know that this union is disjoint.
	Hence,
	\begin{align*}
		\size{C^{\phi}_{G, X}(n)}		&\geq \size{\bigcup_{i = 1}^{k} \{[g]_{\phi} \mid g \in g_{i}A \cap \B_{G, X}(n)\}}	\\
								&=	\sum_{i = 1}^{k} \size	{\{[g]_{\phi} \mid g \in g_{i}A \cap \B_{G, X}(n)\}}	\\
								&\geq	\sum_{i = 1}^{k} \frac{\size{g_{i}A \cap \B_{G, X}(n)}}{m}.
	\end{align*}
	We thus find that
	\begin{align*}
		\tcr_{X}(\phi, G)	&\ge \limsup_{n \to \infty} \sum_{i = 1}^{k} \frac{\size{g_{i}A \cap \B_{G, X}(n)}}{\size{\B_{G, X}(n)} \cdot m}	\\
					&= \frac{1}{m}\sum_{i = 1}^{k} \limsup_{n \to \infty} \frac{\size{g_{i}A \cap \B_{G, X}(n)}}{\size{\B_{G, X}(n)}}	\\
					&= \frac{1}{m} \sum_{i = 1}^{k} \frac{1}{m}	\\
					&= \frac{k}{m^{2}},
	\end{align*}
	where the second and third equalities follow by \cref{pepprop}, as the \(\limsup\) of \(\frac{\size{g_{i}A \cap \B_{G, X}(n)}}{\size{\B_{G, X}(n)}} \) is actually a limit.
\end{proof}
\begin{remark}
	An alternative proof that \(\tcr_{X}(\phi, G) > 0\) under the conditions of \cref{prop:tcrNonZeroVirtuallyAbelianGroups} can be given using results by Dekimpe and Lathouwers.
	They proved \cite[Theorem~B]{DekimpeLathouwers25} that
	\begin{equation}	\label{eq:asymptoticGrowthDekimpeLathouwers}
		\size{C_{G, X}^{\phi}(n)} = \Theta\left(n^{\rank A - \min_{gA \in G / A} \rank(\Id_{A} - \restrb{\inn{g} \circ \phi}{A})} \right).
	\end{equation}	
	Suppose \(g \in G\) is such that \(\restrb{\inn{g} \circ \phi}{A} = \Id_{A}\).
	Then \(\rank (\Id_{A} - \restrb{\inn{g} \circ \phi}{A}) = 0\).
	Consequently, \eqref{eq:asymptoticGrowthDekimpeLathouwers} implies that \(\size{C_{G, X}^{\phi}(n)} = \Theta(n^{\rank A})\).
	As \(G\) is virtually abelian, \(\size{\B_{G, X}(n)} = \Theta(n^{\rank A})\) as well, which this implies that \(\tcr_{X}(\phi, G) > 0\).
\end{remark}

Next, we prove an inequality linking the twisted conjugacy ratio and the degree of twisted commutativity in groups of stable subexponential growth. 
\begin{prop}\label{tcr SSE}
    Let \(G\) be a finitely generated group of stable subexponential growth and \(N\) a normal subgroup of finite index in \(G\). Let \(\phi \in \mathrm{End}(G)\) such that \(\phi(N) \le N\). Then for any generating set \(X\) of \(G\), we have
    \[ \mathrm{tcr}_{X}(\phi, G) \le \mathrm{tdc}(\overline{\phi}, G / N),
    \]
    where \(\overline{\phi}\) is the induced endomorphism on \(G/N\). 
\end{prop}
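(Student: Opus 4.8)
The plan is to bound $|C^{\phi}_{G,X}(n)|$ from above by sorting the $\phi$-conjugacy classes that meet $\B(n)$ according to the $\overline{\phi}$-conjugacy class of their image in $G/N$. Write $d \coloneq [G:N]$, fix coset representatives $g_{1}, \dots, g_{d}$ of $N$ in $G$, and set $C \coloneq \max_{1 \le j \le d}\bigl(|g_{j}| + |\phi(g_{j})|\bigr)$, where $|\cdot|$ is the word length with respect to $X$. First I would observe that the natural projection $p\colon G \to G/N$ sends $\phi$-conjugate elements to $\overline{\phi}$-conjugate ones (immediate from $p \circ \phi = \overline{\phi} \circ p$), so it induces a well-defined map $p_{*}$ from $\phi$-conjugacy classes of $G$ to $\overline{\phi}$-conjugacy classes of $G/N$, with $p([x]_{\phi}) = [\overline{x}]_{\overline{\phi}}$. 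Since $G/N$ is finite, $p_{*}$ takes at most $R(\overline{\phi})$ values, so
\[
	|C^{\phi}_{G,X}(n)| = \sum_{\mathfrak{c}} \#\bigl\{[x]_{\phi} \in C^{\phi}_{G,X}(n) : p_{*}([x]_{\phi}) = \mathfrak{c}\bigr\},
\]
a sum over the (at most $R(\overline{\phi})$) $\overline{\phi}$-conjugacy classes $\mathfrak{c}$ of $G/N$, with a fixed index set independent of $n$. It thus suffices to show that each summand is asymptotically at most $\tfrac{1}{d}|\B(n)|$.

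The heart of the argument is a bounded-length correction. Fix a $\overline{\phi}$-conjugacy class $\mathfrak{c}$ of $G/N$ and a coset representative $g_{j(\mathfrak{c})}$ whose image lies in $\mathfrak{c}$. Let $[x]_{\phi}$ be a $\phi$-conjugacy class with $p_{*}([x]_{\phi}) = \mathfrak{c}$ and a representative $x_{0} \in \B(n)$. Then $\overline{x_{0}}$ and $\overline{g_{j(\mathfrak{c})}}$ both lie in $\mathfrak{c}$, hence are $\overline{\phi}$-conjugate in $G/N$, so there is an index $\ell$ with $\overline{g_{\ell}}\cdot\overline{x_{0}}\cdot\overline{\phi}(\overline{g_{\ell}})^{-1} = \overline{g_{j(\mathfrak{c})}}$. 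Consequently $y \coloneq g_{\ell} x_{0} \phi(g_{\ell})^{-1}$ lies in $[x]_{\phi}$, lies in the coset $g_{j(\mathfrak{c})}N$, and satisfies $|y| \le |g_{\ell}| + |x_{0}| + |\phi(g_{\ell})| \le n + C$. As distinct $\phi$-conjugacy classes are disjoint, choosing one such $y$ per class gives an injection of $\{[x]_{\phi} \in C^{\phi}_{G,X}(n) : p_{*}([x]_{\phi}) = \mathfrak{c}\}$ into $g_{j(\mathfrak{c})}N \cap \B(n+C)$, so this set has size at most $|g_{j(\mathfrak{c})}N \cap \B(n+C)|$.

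To finish, I would divide the resulting bound $|C^{\phi}_{G,X}(n)| \le \sum_{\mathfrak{c}} |g_{j(\mathfrak{c})}N \cap \B(n+C)|$ by $|\B(n)|$ and take $\limsup_{n\to\infty}$. Since stable subexponential growth gives $|\B(n+C)| \sim |\B(n)|$, \cref{pepprop} yields $|g_{j(\mathfrak{c})}N \cap \B(n+C)| / |\B(n)| \to 1/d$ for each of the finitely many $\mathfrak{c}$. As the sum has a fixed finite number of terms, passing to the limit gives
\[
	\tcr_{X}(\phi, G) = \limsup_{n\to\infty} \frac{|C^{\phi}_{G,X}(n)|}{|\B(n)|} \le \frac{R(\overline{\phi})}{d} = \tdc(\overline{\phi}, G/N),
\]
the last equality by \cref{prop:tdcFiniteGroupsReidemeisterNumber}.

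I expect the only genuine obstacle to be the length control in the middle step: a priori, replacing $x_{0}$ by a $\phi$-conjugate lying in a prescribed coset could inflate the word length without bound, and the argument works precisely because $N$ has finite index, so the correction is achieved using one of the $d$ fixed coset representatives at the additive cost $C$, after which stable subexponential growth absorbs the shift through $|\B(n+C)| \sim |\B(n)|$. The remaining ingredients — well-definedness of $p_{*}$, and the point that each class contributes a representative inside the specific coset $g_{j(\mathfrak{c})}N$ rather than merely somewhere in $p^{-1}(\mathfrak{c})$ — are routine.
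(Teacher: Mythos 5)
Your proof is correct and follows essentially the same route as the paper's: partition the $\phi$-conjugacy classes meeting $\B(n)$ according to their image among the $R(\overline{\phi})$ $\overline{\phi}$-conjugacy classes of $G/N$, $\phi$-conjugate a representative into a fixed coset at a bounded additive length cost, bound each part by a coset count in $\B(n+C)$, and pass to the limit using stable subexponential growth and \cref{pepprop}. The only differences are notational (the paper writes $m = [G:N]$ and uses $n+2d$ where you write $d$ and $n+C$) and that you spell out the injection more explicitly; the paper instead observes directly that every element of $\B(n)$ is $\phi$-conjugate to some element of $\bigsqcup_{j} g_jN \cap \B(n+2d)$.
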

The proof is similar to that of \cite[Proposition 3.6]{CCM19}, but we include it here for completeness.

\begin{proof}
    Let \([G:N] = m\) and write \(G = g_{1}N \sqcup g_{2}N \sqcup \dots \sqcup g_{m}N\), where \(g_{1}, \dots, g_{m} \in G\). Let \(d \coloneq \mathrm{max}\{|g_{i}|_{X}, |\phi(g_{i})|_{X} \; \colon \; i = 1,\dots, m \}\), where \(|.|_{X}\) denotes the word length with respect to \(X\). After reordering if necessary, we may assume that the distinct \(\overline{\phi}$-conjugacy classes of \(G/N\) are represented by \(g_{1}, \dots, g_{k}\), for some \(1 \le k \le m\). In particular, \(R(\overline{\phi}) = k\).

    Now consider \(y \in G\) such that \(|y|_{X} \le n\). Then there exist \(1 \le i \le m, 1 \le j \le k\) such that 
    \[ \phi(g_{j})^{-1}yNg_{j} = g_{i}N.
    \]
    Hence, there exists an \(h \in N\) such that \(\phi(g_{j})^{-1}yg_{j} = g_{i}h\), and so \(|g_{i}h|_{X} \le n+2d\). Thus, every element of \(\mathbb{B}(n)\) is \(\phi$-conjugate to some element of \(\sqcup^{k}_{i=1}(g_{i}N \cap \B(n+2d))\). Hence 
    \[ \frac{|C^{\phi}(n)|}{|\mathbb{B}(n)|} \le \frac{\sum^{k}_{i=1} \size{g_{i}N \cap \B(n+2d)}}{|\B(n)|}
    \]
    which tends to \(k/m\) by \cite[Lemma 3.5]{CCM19}. Also note that 
    \[ \mathrm{tcr}_{X}(\overline{\phi}, G/N) = \mathrm{tdc}_{X}(\overline{\phi}, G/N) = \frac{R(\overline{\phi})}{|G/N|} = \frac{k}{m}.
    \]
    The result then follows. 
\end{proof}

\cref{thm:tcr0} is a consequence of \cref{theo:tdcIsZeroForFGResFiniteGroupSubexpGrowthNonVirtuallyAbelian} and \cref{tcr SSE}.

\begin{thm}\label{thm:tcr0}
    Let \(G\) be a finitely generated residually finite group of stable subexponential growth that is not virtually abelian. Let \(\phi \in \mathrm{End}(G)\). Then for any generating set \(X\),
    \[
    	\tcr_{X}(\phi, G) = \tdc_{X}(\phi, G) = 0.
    \]
\end{thm}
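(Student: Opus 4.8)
The plan is to obtain \cref{thm:tcr0} by chaining together the two results explicitly flagged before its statement: \cref{theo:tdcIsZeroForFGResFiniteGroupSubexpGrowthNonVirtuallyAbelian}, which already gives $\tdc_{X}(\phi, G) = 0$ together with the fact that the infimum of $\tdc(\overline{\phi}, G/N)$ over finite-index $\phi$-invariant normal subgroups $N$ is $0$, and \cref{tcr SSE}, which bounds the twisted conjugacy ratio on $G$ from above by $\tdc(\overline{\phi}, G/N)$ for any such $N$. So the argument is essentially a two-line deduction, and the only care needed is to make sure the hypotheses of both cited results are met and that the quantifiers match up.

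First I would record that $G$ satisfies the hypotheses of \cref{theo:tdcIsZeroForFGResFiniteGroupSubexpGrowthNonVirtuallyAbelian}: it is finitely generated, residually finite, of stable subexponential growth, and not virtually abelian. That theorem immediately gives $\tdc_{X}(\phi, G) = 0$ for any finite generating set $X$, and moreover produces, for every $\eps > 0$, a finite-index normal subgroup $N \lhd_{f} G$ with $\phi(N) \le N$ such that $\tdc(\overline{\phi}, G/N) < \eps$, where $\overline{\phi}$ is the induced endomorphism on $G/N$.

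Next I would apply \cref{tcr SSE} to such an $N$: since $G$ has stable subexponential growth and $N$ is a finite-index normal $\phi$-invariant subgroup, the proposition yields
\[
	\tcr_{X}(\phi, G) \le \tdc(\overline{\phi}, G/N) < \eps.
\]
As $\eps > 0$ was arbitrary and $\tcr_{X}(\phi, G) \ge 0$, we conclude $\tcr_{X}(\phi, G) = 0$. Combined with $\tdc_{X}(\phi, G) = 0$ from the previous paragraph, this gives $\tcr_{X}(\phi, G) = \tdc_{X}(\phi, G) = 0$ for every finite generating set $X$, as claimed.

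There is no real obstacle here — the work has already been done in the two cited statements. The only point to be slightly careful about is that \cref{tcr SSE} is stated for a \emph{fixed} $N$ with an upper bound depending on that $N$, whereas \cref{theo:tdcIsZeroForFGResFiniteGroupSubexpGrowthNonVirtuallyAbelian} gives the infimum over all admissible $N$ being $0$; one must therefore pass to a sequence of subgroups $N$ realising this infimum and apply \cref{tcr SSE} to each, rather than to a single $N$. This is why the argument is phrased in terms of an arbitrary $\eps > 0$ rather than via a single inequality.
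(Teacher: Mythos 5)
Your proposal is correct and matches the paper's approach exactly: the paper states this result without a separate proof, noting only that it ``is a consequence of \cref{theo:tdcIsZeroForFGResFiniteGroupSubexpGrowthNonVirtuallyAbelian} and \cref{tcr SSE}.'' Your careful remark about passing to a sequence of subgroups $N$ realising the infimum (rather than a single one) is precisely the small bookkeeping point that makes the chaining rigorous.
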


We mention here recent work of Evetts \cite{Eve23} and Vandeputte \cite{Van25}, from which \cref{thm:tcr0} can also be derived for the class of generalised Heisenberg groups.

\subsection{Groups of exponential growth}
We conclude this section with several examples of groups of exponential growth and twisted conjugacy ratio equal to zero. We first consider the \emph{torus-knot groups} given by the presentation
\[ G = \langle x,y \mid x^{2} = y^{m} \rangle,
\]
where $m = 2k+1$ is an odd positive integer (these are also known as `odd dihedral Artin groups'). We will consider twisted conjugacy classes with respect to the automorphism $\phi \in \mathrm{Aut}(G)$ which sends $x \mapsto x^{-1}$ and $y \mapsto y^{-1}$. This was studied in detail by \cite{Crowe25}. Minimal length representatives of twisted conjugacy classes are of the form 
\begin{equation}\label{eqn:min form}
    w = x^{a_{1}}y^{b_{1}}\dots x^{a_\tau}y^{b_\tau},
\end{equation}
where 
\begin{enumerate}
    \item[$(i)$] $a_{1} = 0$ if and only if $b_{\tau} = 0$,
    \item[$(ii)$] $a_{i} = \pm 1$ for all $2 \le i \le \tau$, and
    \item[$(iii)$] $-k \le b_{i} \le k$ for all $1 \le i \le \tau-1$, $b_{i} \neq 0$ for $i = 2, \dots, \tau-1$.
\end{enumerate}
For a word $w$ of the form in \eqref{eqn:min form}, we say $w'$ is a $\phi$-cyclic permutation of $w$ if we can write $w = w_{1}w_{2}$ for some prefix and suffix $w_{1}$ and $w_{2}$ of $w$ respectively, such that either $w' = u_{2}\phi(u_{1})$, or $w' = \phi(u_{2})u_{1}$. 

Moreover, two words $w_{1}$ and $w_{2}$ of the form in \eqref{eqn:min form} are in the same twisted conjugacy class if and only if there exists a sequence of $\phi$-cyclic permutations and geodesic rewriting from $w_{1}$ to $w_{2}$, where the only rewriting rule for geodesics is swapping pairs of powers $x$ and $x^{-1}$ in a word.

\begin{prop}	\label{prop:DA}
    Let $G = \langle x,y \mid x^{2} = y^{m} \rangle$ be a torus-knot group with generating set $X = \{x^{\pm 1},y^{\pm 1}\}$, and let $\phi \in \mathrm
    {Aut}(G)$ be the order two automorphism which maps $x \mapsto x^{-1}$ and $y \mapsto y^{-1}$. Then $\mathrm{tcr}_{X}(\phi, G) = 0$. 
\end{prop}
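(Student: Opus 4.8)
The plan is to quotient out the infinite cyclic centre and reduce to a hyperbolic group, so that \cref{thm:hyp group tcr 0} applies. The starting point is that $z \coloneq x^{2} = y^{m}$ generates an infinite cyclic central subgroup of $G$, and that $\phi(z) = \phi(x)^{2} = x^{-2} = z^{-1}$. I would set $N \coloneq \grpgen{z^{2}} = \grpgen{x^{4}}$, a $\phi$-invariant normal subgroup, so that $\phi$ induces an automorphism $\overline{\phi}$ of $\overline{G} \coloneq G / N$ with $\overline{\phi}^{2} = \id$. Writing $\pi \colon G \to \overline{G}$ for the quotient map, one computes $\overline{G} = \grpgen{x, y \mid x^{2} = y^{m},\ x^{4} = 1} \cong \ZmodZ{4} \ast_{\ZmodZ{2}} \ZmodZ{2m}$, an amalgam of finite groups over a common $\ZmodZ{2}$. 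Such a group is virtually free, hence hyperbolic, and since $m \ge 3$ it is non-elementary; moreover $\overline{\phi}$ has finite order.

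The next step is to show that $\pi$ induces a bijection between the $\phi$-conjugacy classes of $G$ and the $\overline{\phi}$-conjugacy classes of $\overline{G}$. The crucial identity is that, for all $g \in G$ and $l \in \Z$,
\[
	z^{l} g\, \inv{\phi(z^{l})} = z^{l} g z^{l} = z^{2l} g,
\]
which uses $\phi(z^{l}) = z^{-l}$ and the centrality of $z$. In particular the whole coset $gN$ lies in the single $\phi$-conjugacy class $[g]_{\phi}$. Consequently the map $[g]_{\phi} \mapsto [\pi(g)]_{\overline{\phi}}$ is a well-defined surjection, and it is injective: if $\pi(g')$ is $\overline{\phi}$-conjugate to $\pi(g)$, then $g' = n \cdot h g \inv{\phi(h)}$ for some $h \in G$ and $n \in N$, and since both left-multiplication by the central element $n$ (by the displayed identity applied to $u \coloneq h g \inv{\phi(h)}$) and $\phi$-conjugation by $h$ leave $\phi$-classes invariant, we get $[g']_{\phi} = [g]_{\phi}$.

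To conclude, I would note that $\pi$ maps $\B_{G,X}(n)$ onto $\B_{\overline{G}, \pi(X)}(n)$, and that lifting a geodesic word for $\overline{g}$ shows a $\overline{\phi}$-class meets $\B_{\overline{G}, \pi(X)}(n)$ precisely when the corresponding $\phi$-class meets $\B_{G,X}(n)$; hence $\size{C^{\phi}_{G,X}(n)} = \size{C^{\overline{\phi}}_{\overline{G}, \pi(X)}(n)}$. As $\pi$ is surjective we also have $\size{\B_{G,X}(n)} \ge \size{\B_{\overline{G}, \pi(X)}(n)}$, so
\[
	\frac{\size{C^{\phi}_{G,X}(n)}}{\size{\B_{G,X}(n)}} \le \frac{\size{C^{\overline{\phi}}_{\overline{G}, \pi(X)}(n)}}{\size{\B_{\overline{G}, \pi(X)}(n)}},
\]
and taking $\limsup$ gives $\tcr_{X}(\phi, G) \le \tcr_{\pi(X)}(\overline{\phi}, \overline{G}) = 0$ by \cref{thm:hyp group tcr 0}. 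Since $\tcr_{X}(\phi, G) \ge 0$, equality follows.

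The one genuinely special ingredient is the identity $z^{l} g \inv{\phi(z^{l})} = z^{2l} g$: it is the fact that $\phi$ \emph{inverts} the centre that makes central translates twisted-conjugate and collapses the otherwise obstructive infinite central direction; everything else (hyperbolicity of $\overline{G}$, compatibility of the class bijection with balls) is routine. As an alternative one could work directly with the normal forms of \cite{Crowe25} recalled above and count $\phi$-conjugacy classes through their $\phi$-cyclic permutations, but controlling periodic normal forms makes that approach more delicate, so the reduction to $\overline{G}$ is the cleaner route.
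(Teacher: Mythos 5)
Your proof is correct, but it takes a genuinely different route from the paper's. The paper argues combinatorially and directly from the twisted normal forms of \cite{Crowe25} recalled just before the proposition: every length-$n$ normal-form word lies in a $\phi$-conjugacy class containing at least $2n$ such words (its $\phi$-cyclic permutations), so $\size{C^{\phi}_{G,X}(n)} \le \size{\B_{G,X}(n)}/(2n)$ and the ratio decays explicitly like $O(1/n)$. You instead exploit the fact that $\phi$ \emph{inverts} the central generator $z = x^{2} = y^{m}$: the identity $z^{2l}g = z^{l}g\inv{\phi(z^{l})}$ makes the entire coset $g\grpgen{z^{2}}$ a single twisted class, so quotienting by $N = \grpgen{z^{2}}$ induces a bijection of twisted classes, and the non-elementary virtually free quotient $\overline{G} \cong \ZmodZ{4} *_{\ZmodZ{2}} \ZmodZ{2m}$ is handled by \cref{thm:hyp group tcr 0}. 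The paper's argument is elementary and quantitative but tailored to the presentation; yours is more structural, isolating the real mechanism at work (a centre-inverting automorphism collapses the obstructive central direction modulo twisted conjugacy), and it would apply essentially verbatim to any central extension of a non-elementary hyperbolic group by an infinite cyclic subgroup on which the finite-order automorphism acts by inversion. The choice of $N = \grpgen{z^{2}}$ rather than the full centre $\grpgen{z}$ is the small but crucial point your identity forces, and you handle it correctly.
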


\begin{proof}
First consider the projection $\pi \colon G \rightarrow Q = C_{2} \ast C_{2k+1}$. Note that if $g \in G$ is centreless, we have
\[ g \sim_{\phi} xgx \sim_{\phi} gx^{4}.
\]
In particular, $|C^{\phi}_{G,X}(n)| \leq 2|C^{\phi_{\pi}}_{Q, X'}(n)|$, where $\phi_{\pi}$ fixes $x$ and maps $y \mapsto y^{-1}$, and $X' = \{x, y, y^{-1}\}$. We therefore want to count $\phi_{\pi}$-conjugacy classes with representatives of the form
\begin{equation}\label{eqn:new form}
    w = xy^{b_{1}}xy^{b_{2}}\dots xy^{b_{\tau}}
\end{equation}
where $-k \leq b_{i} \leq k$, $b_{i} \neq 0$ for $i = 2, \dots, \tau-1$ and $0 < |b_{1}-b_{\tau}| \leq k$. 

Let $g(n)$ be the number of words of the form \eqref{eqn:new form} of length $n$, and let $f(n)$ denote the number of classes of $\phi$-cyclic permutations of the form \eqref{eqn:new form} of length $n$. Note $f(n)$ is an upper bound on $|C^{\phi}_{Q,X'}(n)|$. For each word of length $n$ of the form \eqref{eqn:new form}, there are $2n$ $\phi$-cyclic permutations we can apply, whilst remaining in the same twisted conjugacy class. This would suggest that $f(n) = g(n)/2n$, however this does not account for the fact that some $\phi$-cyclic permutations of length less than $n$ will result in us returning to the original word. 

To account for this, let $g_{d}(n)$ be the number of words of the form \eqref{eqn:new form} of length $n$, which are $d-\phi$-periodic, that is, the number of words with minimum period equal to $d$ under $\phi$-cyclic permutations. Note this can only occur when $d$ is a divisor of $2n$, by the definition of words of the form \eqref{eqn:new form} and that $\phi$ maps generators to their inverses. This implies that

    \[ |C^{\phi}_{Q,X'}(n)| \leq f(n) = \sum_{d | 2n} \frac{g_{d}(n)}{d} .
    \]
    First note that $g_{n}(n) = 1$ when $n=1$, otherwise $g_{n}(n) = 0$ since the exponents of $y$ are switched after $d=n$ $\phi$-cyclic permutations. This leaves us with
    \begin{equation}\label{eqn:5.8}
        \forall n\ge 2,\quad \size{C^{\phi}_{Q,X'}(n)} = \frac{g_{2n}(n)}{2n} + \sum_{d\mid 2n, d \leq \frac{2n}{3}} \frac{g_{d}(n)}{d}. 
    \end{equation}
    Since $g_{d}(n)$ is counting a subset of a regular language, the growth rate is bounded above by $q(n)\alpha^{d}$, for some polynomial $q$ and constant $\alpha > 1$. This leaves us with 
    \[ |C^{\phi}_{Q,X'}(n)| \leq \frac{q(n)\alpha^{2n}}{2n} + \sum_{d | 2n, d \leq \frac{2n}{3}} \frac{q(n)\alpha^{d}}{d} \leq   \frac{q(n)\alpha^{2n}}{2n} + \frac{2n}{3}q(n)\alpha^{2n/3}.
    \]
    Since the second term is exponentially smaller than $g_{2n}(n)$, we can rewrite \cref{eqn:5.8} as
    \[ |C^{\phi}_{Q,X'}(n)| \leq \frac{g_{2n}(n)}{2n} + \varepsilon
    \]
    where $\varepsilon$ is exponentially small. Since $|\mathbb{S}_{Q,X'}(n)| \geq g_{2n}(n)$, then 
    \[ \frac{f(n)}{|\mathbb{S}_{Q,X'}(n)|} \leq \frac{1}{2n} + \epsilon
    \]
    which tends to zero as $n$ tends to infinity. In particular, the spherical twisted conjugacy ration is zero, and so $\mathrm{tcr}_{X'}(\phi, Q) = 0$, by the Stolz-Ces\`aro theorem (see Remark 1.2 of \cite{CCM19}). The result then follows. 
\end{proof}
Our final examples arise from finite cyclic extensions. Let $H$ be a finitely generated group with generating set $X$, and consider the finite cyclic extension
\begin{equation}\label{eqn:extension}
    G = H \rtimes_{\phi} \Z/m\Z  = \langle X \cup t \mid R(H), t^{m}=1, t^{-1}xt = \phi(x) \; (x \in X) \rangle,
\end{equation}
where $H$ has presentation $\langle X \mid R(H) \rangle$, and $\phi \in \mathrm{Aut}(H)$ is of finite order dividing \(m\). With this construction, conjugacy classes of $G$ are linked to twisted conjugacy classes of $H$ in the following way.

\begin{lem}[{\cite[Page 4]{BMMV}}]	\label{lem:extension}
	Let $G$ be a group extension of the form \eqref{eqn:extension}. Let $t^{a}u, t^{b}v \in G$, with $u,v \in H$ and $0 \le a,b \le m-1$. Then $t^{a}u \sim t^{b}v$ if and only if $a = b$ and $v \sim_{\phi^{a}} \phi^{k}(u)$ for some integer $k$ such that $0 \le k \le m-1$.     
\end{lem}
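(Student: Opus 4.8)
The plan is to prove \cref{lem:extension} directly, working in the normal form of $G$. Every element of $G = H \rtimes_{\phi} \Z/m\Z$ is uniquely $t^{a}u$ with $0 \le a \le m-1$ and $u \in H$, and the relation $t^{-1}xt = \phi(x)$ gives the multiplication rule $u\,t^{c} = t^{c}\phi^{c}(u)$ for $u \in H$ and $c \in \Z$; exponents of $t$ may be reduced modulo $m$ here precisely because $\phi$ has order dividing $m$. I would first record this, and then compute the conjugation action explicitly: for a conjugator $x = t^{c}w$, a short computation with the multiplication rule gives
\[
	(t^{c}w)(t^{a}u)(t^{c}w)^{-1} = t^{a}\,\phi^{a-c}(w)\,\phi^{-c}(u)\,\phi^{-c}(w)^{-1}.
\]
In particular the $\Z/m\Z$-coordinate is a conjugacy invariant, which immediately gives one half of the statement: if $t^{a}u \sim t^{b}v$ then $a \equiv b \pmod{m}$, hence $a = b$ since both lie in $\{0, \dots, m-1\}$.

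Assuming $a = b$, I would then handle the $H$-coordinate by proving both implications. For the forward direction, the displayed identity gives $v = \phi^{a-c}(w)\,\phi^{-c}(u)\,\phi^{-c}(w)^{-1}$ for some $c$ and $w \in H$; setting $z \coloneq \phi^{-c}(w) \in H$ and $k \coloneq (-c) \bmod m \in \{0, \dots, m-1\}$ and using $w = \phi^{c}(z)$ rewrites this as $v = \phi^{a}(z)\,\phi^{k}(u)\,z^{-1}$, which is the asserted twisted conjugacy $v \sim_{\phi^{a}} \phi^{k}(u)$. For the converse, given $v = \phi^{a}(z)\,\phi^{k}(u)\,z^{-1}$ with $0 \le k \le m-1$, I would exhibit the conjugator explicitly: take $x \coloneq z\,t^{c}$ with $c \coloneq (-k)\bmod m$ (so $z\,t^{c} = t^{c}\phi^{c}(z)$ has the shape used above) and verify via the multiplication rule that $x(t^{a}u)x^{-1} = t^{a}v$. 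This completes the proof.

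The one genuinely delicate point, and the place I expect the main difficulty, is the bookkeeping of conventions: the direction of the semidirect-product action ($t^{-1}xt = \phi(x)$ versus $txt^{-1}=\phi(x)$), the side on which one conjugates, and the precise convention for $\phi$-twisted conjugacy all feed into which power of $\phi$ ends up appearing, and on which side. The computation above uses the conventions of \eqref{eqn:extension} together with the twisted-conjugacy convention $x \sim_{\psi} y \iff x = \psi(z)\,y\,z^{-1}$, which is the one under which \cite{BMMV} phrases the lemma; a short substitution $z \mapsto \phi^{-a}(z)$ shows that the conclusion $v \sim_{\phi^{a}}\phi^{k}(u)$ in that convention is the same statement as $v \sim_{\phi^{-a}}\phi^{k}(u)$ in the convention $x \sim_{\psi} y \iff x = z\,y\,\psi(z)^{-1}$ used elsewhere in this paper, so some care with conventions is needed when quoting it. Finally, it is worth isolating the role of $\phi$ having finite order dividing $m$: it is what makes reduction of $t$-exponents modulo $m$ compatible with the action, and it is why the ambiguity in the $H$-coordinate (conjugation by powers of $t$ sends $t^{a}u$ to $t^{a}\phi^{j}(u)$) is exhausted by the finitely many powers $\phi^{0}, \dots, \phi^{m-1}$, producing the range $0 \le k \le m-1$.
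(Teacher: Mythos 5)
The paper does not prove this lemma; it simply cites it from \cite{BMMV}, so there is no internal argument to compare against, and you are supplying a genuine gap. Your computation is correct: I rechecked that with the relation $ut^{c}=t^{c}\phi^{c}(u)$ one gets $(t^{c}w)(t^{a}u)(t^{c}w)^{-1}=t^{a}\phi^{a-c}(w)\phi^{-c}(u)\phi^{-c}(w)^{-1}$, the substitution $z=\phi^{-c}(w)$ gives $v=\phi^{a}(z)\phi^{k}(u)z^{-1}$, and your explicit conjugator $zt^{(-k)\bmod m}$ inverts the argument, using $\phi^{m}=\Id$ exactly where you say it is used. Your flag about conventions is also real and worth stating plainly: the paper defines $\psi$-conjugacy (both in the introduction and in the preliminaries) by $x=zy\psi(z)^{-1}$, and under that definition your computation yields $v\sim_{\phi^{-a}}\phi^{k}(u)$, not $v\sim_{\phi^{a}}\phi^{k}(u)$; the lemma as worded matches the paper's own convention only if one reads the semidirect product with $txt^{-1}=\phi(x)$ instead of the $t^{-1}xt=\phi(x)$ appearing in \eqref{eqn:extension}, or else interprets $\sim_{\phi^{a}}$ in the opposite (``$x=\psi(z)yz^{-1}$'') sense. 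Fortunately this sign is harmless for the one place the lemma is used --- \cref{prop:finite extension} only needs that conjugacy classes in the coset $t^{a}H$ biject with $\phi^{j}$-twisted classes of $H$ for \emph{some} $j\in\{0,\ldots,m-1\}$, and $\phi^{-a}$ runs over the same set as $\phi^{a}$ since $\phi^{m}=\Id$ --- but your observation would make a useful footnote in the paper.
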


This allows us to prove Proposition \ref{prop:finite extension}, which follows from results in \cite{CCM19} and Theorem \ref{thm:tcr0} for groups of stable subexponential growth, so is mainly of interest for groups of exponential growth. We let $\widehat{X} = X \cup \{t\}$ denote the generating set for $G$. 

\begin{prop}\label{prop:finite extension}
    Let $G$ be a finite group extension of $H$ as in \eqref{eqn:extension}, where $X$ is invariant under $\phi$. 
    Then $\CR_{\widehat{X}}(G) = 0$ if and only if $\tcr_{X}(\phi^{a}, H) = 0$ for all $0 \le a \le m-1$. 
\end{prop}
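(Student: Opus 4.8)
The plan is to relate $|C_{\widehat X}(n)|$ for $G$ to the quantities $|C^{\phi^a}_X(n)|$ for $H$ using \cref{lem:extension}, and then to compare the growth functions of $G$ and $H$. First I would observe that every element of $G$ lies in exactly one coset $t^a H$ with $0 \le a \le m-1$, so
\[
	C_{\widehat X}(n) = \bigsqcup_{a=0}^{m-1} \{[t^a u] : u \in H,\ t^a u \in \B_{G,\widehat X}(n)\}.
\]
By \cref{lem:extension}, two elements $t^a u, t^a v$ are conjugate in $G$ exactly when $v \sim_{\phi^a} \phi^k(u)$ for some $0 \le k \le m-1$; since $\phi$ has order dividing $m$ and $\phi^a$-twisted conjugacy classes are permuted by the (finitely many) powers of $\phi$, the number of $G$-conjugacy classes of the form $[t^a u]$ with a representative in $\B_{G,\widehat X}(n)$ is, up to the multiplicative constant $m$, comparable to the number of $\phi^a$-twisted conjugacy classes of $H$ with a representative in an appropriately-sized ball of $H$. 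Concretely, I would show there are constants $c, d$ (depending only on $m$ and on the word lengths $|t^{\pm 1}|$) with
\[
	\tfrac{1}{m}\,|C^{\phi^a}_{X}(n - d)| \le \size{\{[t^a u] : t^a u \in \B_{G,\widehat X}(n)\}} \le |C^{\phi^a}_{X}(n + d)|,
\]
the key points being that $|u|_X$ and $|t^a u|_{\widehat X}$ differ by at most a bounded additive constant (as $t$ has order $m$), and that conjugating by $t^b$ changes word length by a bounded amount, so one may always choose the conjugating element to be a short word times an element of $H$.

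Next I would handle the growth functions: since $[G:H] = m < \infty$, the group $G$ has exponential growth as well, and $\gamma_{\widehat X}(G) = \gamma_X(H) =: \gamma > 1$, so $|\B_{G,\widehat X}(n)| \sim \gamma^n$ and $|\B_{H,X}(n)| \sim \gamma^n$. Combining this with the two-sided estimate above and summing over the finitely many values $a = 0, \dots, m-1$, I get
\[
	\tcr_{\widehat X}(\phi,G)_{\text{(untwisted, i.e.\ }\CR_{\widehat X}(G)\text{)}} = \limsup_{n\to\infty} \frac{|C_{\widehat X}(n)|}{|\B_{G,\widehat X}(n)|}
\]
is $0$ if and only if each $\limsup_{n\to\infty} |C^{\phi^a}_X(n)| / \gamma^n = \tcr_X(\phi^a, H)$ is $0$. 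For the ``only if'' direction one uses the lower bound (a single $a$ with $\tcr_X(\phi^a,H)>0$ forces $\CR_{\widehat X}(G)>0$); for the ``if'' direction one uses the upper bound together with the fact that a finite sum of sequences each with $\limsup = 0$ (after dividing by $\gamma^n$) still has $\limsup = 0$. The additive shifts $n \pm d$ are harmless because $\gamma^{n\pm d}/\gamma^n$ is a fixed constant.

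The main obstacle I anticipate is making the length comparison between the $G$-ball and the $H$-ball fully rigorous, i.e.\ pinning down the claim that a minimal-length representative of a $G$-conjugacy class $[t^a u]$ can be taken of the form $t^a v$ with $v \in H$ of word length close to the minimal $\phi^a$-twisted-conjugacy length of $u$ in $H$. This requires care because conjugation in $G$ by an element $t^b w$ (with $w \in H$) acts on $u$ by $w^{-1} \phi^{-b}(\,\cdot\,) w$ up to the $\phi^k$ appearing in \cref{lem:extension}, and one must check that passing between ``$H$-length of $u$'' and ``$\widehat X$-length of $t^a u$ in $G$'' only distorts distances by a bounded additive amount — which holds because $t$ is torsion of order $m$, so any occurrence of $t$ in a geodesic word can be reduced to one of boundedly many powers, contributing at most a constant to the length. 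Once this Lipschitz-type comparison is in hand, the rest is the routine $\limsup$ bookkeeping sketched above, exactly parallel to \cref{tcr SSE} and \cite[Lemma~3.5]{CCM19}.
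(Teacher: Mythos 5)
Your proposal follows essentially the same route as the paper: decompose conjugacy classes of $G$ by the cosets $t^{a}H$, apply \cref{lem:extension} to reduce to $\phi^{a}$-twisted conjugacy in $H$, and then compare ball sizes for $G$ and $H$. Your explicit factor of $1/m$ accounting for the $\phi^{k}$-orbit identification is a correct refinement of a point the paper's exact formula $\size{C_{G,\widehat X}(n)} = \size{C_{H,X}(n)} + \sum_{a\ge 1}\size{C^{\phi^{a}}_{H,X}(n-a)}$ glosses over. The one step to tighten in your plan is the growth comparison: the equality $\gamma_{\widehat X}(G)=\gamma_{X}(H)$ is not automatic (the restriction of the $\widehat X$-metric to $H$ can be strictly smaller than the $X$-metric, since each relation $t^{-1}xt=\phi(x)$ may shorten words), and even when the rates agree, $\size{\B(n)}\sim\gamma^{n}$ in the paper's strict sense of ratio tending to $1$ is generally false for exponential-growth groups. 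The paper instead works directly with $\size{\B_{G,\widehat X}(n)}\le m\size{\B_{H,X}(n)}$ together with the boundedness of $\size{\B(n+d)}/\size{\B(n)}$ (from $\B(n+1)\subseteq\B(n)\B(1)$), which is all that is actually needed to absorb your additive shifts $n\pm d$; substituting that for the $\gamma^{n}$ asymptotics makes your argument line up with the paper's.
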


\begin{proof}
    We provide details when $m$ is odd, and the proof for $m$ even is analogous. 

    First note that any element $g \in G$ of the form in \eqref{eqn:extension} can be written as $g = t^{a}u$, for some $u \in H$, and $0 \leq a \leq m-1$. By \cref{lem:extension}, when counting conjugacy classes in $G$, we need to count twisted conjugacy classes 
    \[ [u], [u]_{\phi}, [u]_{\phi^{2}}, \dots,  [u]_{\phi^{m-1}}
    \]
    for all $u \in H$, and so 
    \[ \size{C_{G,\widehat{X}}(n)} = \size{C_{H,X}(n)} + \sum^{m-1}_{a=1} \size{C^{\phi^{a}}_{H,X}(n-a)}.
    \]
    Also, 
    \( \size{\B_{G,\widehat{X}}(n)} = \sum^{m-1}_{a=0} \size{\B_{H,X}(n-a)} \leq m \cdot \size{\B_{H,X}(n)}.
    \)
    
    For the forward direction, note that for all $0 \le a \le m-1$, $\size{C^{\phi^{a}}_{H,X}(n)} \le \size{C_{G, \widehat{X}}(n+a)}$. Therefore, for all $0 \le a \le m-1$, we have
    \begin{align*}
        \tcr_{X}(\phi^{a}, H) &= \limsup\limits_{n\rightarrow \infty} \frac{\size{C^{\phi^{a}}_{H, X}(n)}}{\size{\B_{H,X}(n)}}\\
        &\le \limsup\limits_{n\rightarrow \infty} \frac{\size{C_{G,\widehat{X}}(n+a)}}{\size{\B_{G,\widehat{X}}(n)}}\cdot m \\
        &= \limsup\limits_{n\rightarrow \infty} \frac{\size{C_{G,\widehat{X}}(n+a)}}{\size{\B_{G,\widehat{X}}(n+a)}}\cdot \frac{m \cdot \size{\B_{G,\widehat{X}}(n+a)}}{\size{\B_{G,\widehat{X}}(n)}} = 0
    \end{align*}
    since $\CR_{\widehat{X}}(G) = 0$ and $\frac{m \cdot \size{\B_{G,\widehat{X}}(n+a)}}{\size{\B_{G,\widehat{X}}(n)}}$ is bounded above by a constant as $n \rightarrow \infty$ and for fixed $a$.
    
    For the reverse direction, we have
    \begin{align*}
	\frac{\size{C_{G, \widehat{X}}(n)}}{\size{\B_{G,\widehat{X}}(n)}}	&= \frac{\size{C_{H,X}(n)} + \sum^{m-1}_{a=1} \size{C^{\phi^{a}}_{H,X}(n-a)}}{\sum^{m-1}_{a=0} \size{\B_{H,X}(n-a)}}\\
        &=\frac{\size{C_{H,X}(n)}}{ \sum^{m-1}_{a=0} \size{\B_{H,X}(n-a)}} + \frac{\size{C^{\phi}_{H,X}(n-1)}}{ \sum^{m-1}_{a=0} \size{\B_{H,X}(n-a)}} + \dots \\
        &\dots +  \frac{\size{C^{\phi^{m-1}}_{H,X}(n-(m-1))}}{ \sum^{m-1}_{a=0} \size{\B_{H,X}(n-a)}}                \\
        &\leq \frac{\size{C_{H,X}(n)}}{\size{\B_{H,X}(n)}}+ \frac{\size{C^{\phi}_{H,X}(n-1)}}{\size{\B_{H,X}(n-1)}} + \dots + \frac{\size{C^{\phi^{m-1}}_{H,X}(n-(m-1))}}{\size{\B_{H,X}(n-(m-1))}}	\\
        &= \sum^{m-1}_{a=0} \frac{\size{C^{\phi^{a}}_{H,X}(n-a)}}{\size{\B_{H,X}(n-a)}}.
    \end{align*}

    Each of the terms in the summation tends to zero as $n \rightarrow \infty$, since $\tcr_{X}(\phi^{a}, H) = 0$ for all $0 \le a \le m-1$.
    This implies that
    \[
    	\CR_{\widehat{X}}(G) = \limsup\limits_{n \to \infty} \frac{\size{C_{G, \widehat{X}}(n)}}{\size{\B_{G,\widehat{X}}(n)}} = 0,
    \]
	which completes the proof.
\end{proof}
This construction allows us to find examples of groups with exponential growth such that the twisted conjugacy ratio is zero. The following generalises \cite[Theorem 4.1]{dcA}, which is based on \cite[Theorem 1.2]{AC17}.

\begin{thm}\label{thm:hyp group tcr 0}
    Let $G$ be a hyperbolic group and let $\phi \in \mathrm{Aut}(G)$ be of finite order. Then $\tcr_{X}(\phi, G) = 0$ for any generating set $X$ of $G$. 
\end{thm}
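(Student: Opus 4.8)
The plan is to reduce the statement for a hyperbolic group $G$ with a finite-order automorphism $\phi$ to the computation of a conjugacy ratio for a larger, finite-cyclic extension, and then invoke known results about $\CR$ for hyperbolic groups. Concretely, let $m$ be the order of $\phi$ and form the extension $\widehat{G} = G \rtimes_{\phi} \Z/m\Z$ as in \eqref{eqn:extension}, with generating set $\widehat{X} = X \cup \{t, t^{-1}\}$. Since $G$ is hyperbolic and $\Z/m\Z$ is finite, $\widehat{G}$ is again hyperbolic (it is virtually $G$, being a finite extension). The powers $\phi^{a}$ for $0 \le a \le m-1$ are all of finite order, and each is an automorphism of the hyperbolic group $G$.

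First I would observe that we may assume $G$ has exponential growth: if $G$ is hyperbolic of subexponential (hence polynomial) growth, it is virtually nilpotent, in fact virtually cyclic, and the twisted conjugacy ratio statement in that case follows from the virtually abelian analysis already developed (or directly, since $\B_{G,X}(n)$ grows linearly and the number of twisted conjugacy classes with a short representative grows at most linearly, so the ratio is bounded but one checks it still goes to $0$ using \cref{pepprop}-type arguments; alternatively, such $G$ has $R(\phi) < \infty$ for finite-order $\phi$, so the lemma on finite Reidemeister number applies). With $G$ of exponential growth, I would apply \cref{prop:finite extension} to $\widehat{G} = H \rtimes_{\phi}\Z/m\Z$ with $H = G$: this gives that $\CR_{\widehat{X}}(\widehat{G}) = 0$ if and only if $\tcr_{X}(\phi^{a}, G) = 0$ for all $0 \le a \le m-1$. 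In particular, if $\CR_{\widehat{X}}(\widehat{G}) = 0$, then taking $a = 1$ yields $\tcr_{X}(\phi, G) = 0$, which is what we want (for the generating set $X$; independence of generating set then follows as in the untwisted case, or is folded into the final argument).

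The remaining, and main, step is to show $\CR_{\widehat{X}}(\widehat{G}) = 0$, i.e. that hyperbolic groups of exponential growth have vanishing (untwisted) conjugacy ratio. This is exactly the content of \cite[Theorem 1.2]{AC17} (and its use in \cite[Theorem 4.1]{dcA}), which the statement explicitly says it generalises: a non-elementary hyperbolic group has conjugacy ratio $0$ with respect to any generating set, because the number of conjugacy classes with a representative of length $\le n$ grows like $\gamma^{n}/n$ (roughly), strictly slower than $|\B(n)| \sim \gamma^{n}$, via the fact that long geodesics representing a given conjugacy class can be cyclically permuted. I would cite this directly for $\widehat{G}$, which is hyperbolic and non-elementary (it contains $G$, which is non-elementary since it has exponential growth and is hyperbolic). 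Combining, $\CR_{\widehat{X}}(\widehat{G}) = 0$, hence by \cref{prop:finite extension} every $\tcr_{X}(\phi^{a}, G) = 0$, in particular $\tcr_{X}(\phi, G) = 0$.

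The one genuine obstacle is making sure all hypotheses of \cref{prop:finite extension} and of the cited conjugacy-ratio result for hyperbolic groups are met simultaneously: \cref{prop:finite extension} requires $H = G$ to have exponential growth, which forces the separate (easy) treatment of the virtually cyclic case; and the cited $\CR = 0$ result requires $\widehat{G}$ to be non-elementary hyperbolic, which again holds precisely when $G$ has exponential growth. So the proof naturally splits into the elementary case (handled via finite Reidemeister number or bounded ball growth) and the non-elementary case (handled via the extension trick plus \cite{AC17}). Checking that $\widehat{G}$ is hyperbolic is immediate since it is a finite extension of the hyperbolic group $G$, and checking that $\phi^{a}$ are all of finite order dividing $m$ is trivial. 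I would also remark that the independence from the generating set $X$ of $G$ follows because $\CR$ for hyperbolic groups is known to be independent of the generating set (again by \cite{AC17}), and the equivalence in \cref{prop:finite extension} transfers this.
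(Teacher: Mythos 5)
Your main line of argument is the same as the paper's: form the finite cyclic extension $G' = G \rtimes_{\phi} \Z/m\Z$, observe it is hyperbolic (being a finite extension of $G$), invoke the known result that hyperbolic groups of exponential growth have vanishing conjugacy ratio (\cite[Theorem 1.2]{AC17}, as used in \cite[Theorem 4.1]{dcA}), and conclude via \cref{prop:finite extension}. The paper's proof is exactly these three steps, stated without the case split you introduce.

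Where you go beyond the paper is in noticing that \cref{prop:finite extension} requires $H = G$ to have exponential growth and that the cited $\CR = 0$ result needs $G'$ to be non-elementary, so the elementary (virtually cyclic) case needs separate treatment. That observation is correct and is, in fact, a gap in the paper's own proof. However, your proposed handling of the elementary case is wrong on both counts you offer: (1) it is not true that $R(\phi) < \infty$ for every finite-order $\phi$ on a virtually cyclic group — for $G = \Z$ and $\phi = \Id$ (which has finite order $1$) every element is its own $\phi$-conjugacy class, so $R(\Id) = \infty$; and (2) the ratio does not go to $0$ there — in that example $|C^{\phi}_{G,X}(n)| = |\B_{G,X}(n)|$, so $\tcr_X(\Id, \Z) = 1$. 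The theorem as stated is therefore false for elementary hyperbolic groups, and neither your sketch nor the paper's one-line proof addresses this. The statement should be read (as the paper implicitly does, via the reference to \cite{AC17}) as being about non-elementary hyperbolic groups; once that hypothesis is made explicit, your non-elementary argument is correct and matches the paper's.
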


\begin{proof}
    We can construct a finite extension $G' = G \rtimes_{\phi} \Z/m\Z$ of $G$ as in \eqref{eqn:extension}, where $G \le G'$ is of finite index. Since hyperbolicity is a quasi-isometry invariant, $\CR_{\widehat{X}}(G') = 0$, and so $\tcr_{X}(\phi, G) = 0$ by \cref{prop:finite extension}.
\end{proof}

\begin{exmp}\label{exmp:RAAG tcr 0}
   Let $A_{\Gamma}$ be a right-angled Artin group which is not free abelian, with standard generating set $X = V(\Gamma)$. 
   
   There exist examples of finite extensions of $A_{\Gamma}$ as in \eqref{eqn:extension} (besides the free non-abelian groups covered in \cref{thm:hyp group tcr 0}), for some finite order automorphisms $\phi \in \mathrm{Aut}(A_{\Gamma})$, which are graph products of cyclic groups (see \cite{Crowe23} and \cite[Prop. 4.2.32]{Crowe24}). Graph products of cyclic groups have standard growth $\sim e^{n}$ \cite[Theorem 6]{Val19} when the graph product is not a direct product, and the conjugacy growth is $\sim \frac{e^{n}}{n}$ by \cite[Corollary 1.7]{CG25}. Therefore $\CR_{\widehat{X}}(G) = 0$, and so $\tcr_{X}(\phi, A_{\Gamma}) = 0$ by \cref{prop:finite extension}.
\end{exmp}

\section{Reflections and open questions}\label{sec:reflections}

The results in this paper are split, roughly, into two types: those about groups of subexponential growth and those about groups of exponential growth. For the latter ones, we conjecture that \cref{prop:free} in \cref{sec:exp} can be generalised to (at least) torsion-free hyperbolic groups and their automorphisms.

\begin{quR}
	Is there an upper bound for the relative growth rates of fixed subgroups of (torsion-free) hyperbolic groups that is strictly smaller than the group growth rate?
\end{quR}
Fixed subgroups of automorphisms of hyperbolic group are quasi-convex \cite{Neu92}, and results on the relative growth of quasi-convex subgroups of infinite index in hyperbolic groups are given by \cite{DFW19}. Furthermore, in torsion-free hyperbolic groups, there are only finitely many isomorphism classes of fixed subgroups, so an argument as in \cref{prop:free} should be applicable.
The cases where all hyperbolic groups and all endomorphisms are considered, rather than torsion-free ones and just automorphisms, might require different techniques.

More broadly, our results likely hold for larger classes of groups of exponential growth: that is, we conjecture that the degree of twisted commutativity, as well as the twisted conjugacy ratio, are equal to zero in (non-abelian) RAAGs, non-affine Coxeter groups, relatively hyperbolic groups, and further acylindrically hyperbolic groups. 
Another class of interest is abelian-by-cyclic groups (see \cite{Guo25}).
In general, the study of the degrees of twisted commutativity or conjugacy is made difficult by the fact that we have very few tools to tackle twisted conjugacy growth.





Furthermore, we pose three questions:
\begin{quR}
Is the limsup in the definition of the twisted degree of commutativity or twisted conjugacy ratio a limit in general?
\end{quR}
As is the case for the degree of commutativity and the conjugacy ratio, we do not know whether the twisted versions might be influenced by a change of generators, or how they connect to each other.
\begin{quR}
Does there exist a group \(G\) with finite generating sets \(X\) and \(Y\) such that \(\tdc_X(\phi,G)\neq \tdc_Y(\phi, G)\), or $\tcr_{X}(\phi, G) \neq \tcr_{Y}(\phi, G)$ for fixed \(\phi\)?  
\end{quR}
\begin{quR}
What are the connections between \(\tdc_X(\phi,G)\) and $\tcr_{X}(\phi, G)$ for an arbitrary group $G$ and finite generating set $X$, where $\phi \in \End(G)$?  
\end{quR}
Corollary \ref{cor:eqtdctcr} shows that the above are equal for virtually abelian groups and arbitrary generating sets, but an intrinsic reason for this equality is not clear. Examples of groups where these two quantities differ are yet to be found.


%
%
%
%
%
%
%

 \section*{Acknowledgements}
The authors thank Corentin Bodart and Fran\c cois Dahmani for helpful discussions.

\appendix

\section{Twisted commutativity and conjugacy ratio in virtually abelian groups (by Corentin Bodart)}

In this appendix, we extend the arguments used in Sections \ref{sec:tdc_VirtuallyAbelian} and \ref{sec:tcr_VirtuallyAbelian} to obtain Theorems \ref{thm:va_indep} and \ref{thm:va_tcr_indep} about $\tdc$ and $\tcr$, respectively.

\begin{thm}\label{thm:va_indep}
	Let $G$ be a finitely generated virtually abelian group, and $\varphi\in\End(G)$. Then $\tdc_X(\varphi,G)$ is an effectively computable rational number, and
	\[ \tdc_X(\varphi,G) = \inf\{\tdc(\bar\varphi,G/N)\mid N\lhd_fG, \; \varphi(N)\le N\},\]
	where $\bar\varphi$ is the induced homomorphism on the finite group $G/N$.
	
	In particular, $\tdc_X(\varphi,G)$ is independent of $X$.
\end{thm}

We use the following setup for both results in this section.

\begin{notation}\label{not:appendix}
We may assume $G$ is infinite and let $A\lhd_f G$ be an abelian subgroup of finite index such that $A\simeq\Z^r$ and $\varphi(A)\le A$. We have a short exact sequence
	\[ 1 \longrightarrow A \longrightarrow G \overset\pi\longrightarrow F \longrightarrow 1 \]
	with $F$ finite. Since $\varphi(A)\le A$, the restriction of $\varphi$ gives an endomorphism $\bar\varphi\in\End(F)$. For each $i\in F$, we fix $g_i\in G$ such that $\pi(g_i)=i$. 

	\begin{itemize}
		\item[(i)] We define $L\in \Mat_r(\Z)$ such that $\varphi(a)= La$ for $a\in A$.
		\item[(ii)] For $i\in F$, we define $c_i\in A$ such that $\varphi(g_i)=g_{\bar\varphi(i)}\cdot c_i$.
		\item[(iii)] For $i\in F$, we define $M_i\in \GL_r(\Z)$ such that $ag_i=g_i(M_ia)$ for $a\in A$.
		\item[(iv)] For $i,j\in F$, we define $d_{i,j}\in A$ such that $g_ig_j=g_{ij}\cdot d_{i,j}$.
		\item[(v)] For $i\in F$, we define $e_i\in A$ such that $g_i^{-1}=g_{i^{-1}}\cdot e_i$.
	\end{itemize}
\end{notation}

\begin{proof}[Proof of \cref{thm:va_indep}]
	With the Notation above in place, the $\varphi$-commutativity condition for $g=g_ia$ and $h=g_jb$ can be rewritten as 
	\begin{align*}
		gh=h\varphi(g)
		& \iff g_iag_jb=g_jb\varphi(g_i)\varphi(a) \\
		& \iff \begin{cases}
			ij=j\bar\varphi(i) \\
			d_{i,j}+M_ja+b=d_{j,\bar\varphi(i)}+M_{\bar\varphi(i)}b+c_i+La
		\end{cases}
	\end{align*}
	For $i,j$ fixed, this last equation is an affine equation with unknowns $(a,b)\in A^2\simeq\Z^{2r}$, hence the set of solutions is either $A^2$ if
	\begin{equation} \label{eq:TrivialCondition}
		d_{i,j}=d_{j,\bar\varphi(i)}+c_i,\quad M_j=L \quad\text{and}\quad M_{\bar\varphi(i)}=I,
	\end{equation}
	or a proper subspace (or $\emptyset$) otherwise. We claim that
	\[\tdc(\varphi,G)=\frac1{\size F^2} \size{\left\{(i,j)\in F^2\mid ij=j\bar\varphi(i) \text{ and condition (\ref{eq:TrivialCondition}) holds}\right\}}.\]
	To prove the claim, write $T=\left\{(i,j)\in F^2\mid ij=j\bar{\phi}(i)\text{ and condition (\ref{eq:TrivialCondition}) holds}\right\}$ and fix  a generating set $X$ for $G$. We have
	\begin{align*}
		\tdc_X(\varphi,G)
		& = \limsup_{n\to\infty} \frac1{\size{\B_{G,X}(n)}^2} \cdot \size{\left\{(g,h)\in\B_{G,X}(n)^2 \mid gh=h\varphi(g)\right\}} \\
		& \ge \limsup_{n\to\infty} \frac1{\size{\B_{G,X}(n)}^2} \sum_{(i,j)\in T} \size{g_iA\cap \B_{G,X}(n)}\cdot\size{g_jA \cap \B_{G,X}(n) } \\
		& = \frac{\size T}{\size F^2}
	\end{align*}
	using \cref{pepprop}.
	
	\bigskip
	
	\noindent On the other side, for $p$ prime large enough, condition (\ref{eq:TrivialCondition}) holds modulo $p$ if and only if $(i,j)\in T$ (\cref{lem:MatrixEqualIModp}). Therefore, taking $N=pA$ (which is finite-index and fully characteristic in $A$, hence $N\lhd_f G$ and $\varphi(N)\leq N$), we have
	\begin{align*}
		\tdc(\bar\varphi,G/N) &
		\le \frac1{\size{G/N}^2}\left(\sum_{(i,j)\in T}p^{2r} + \sum_{(i,j)\notin T}p^{2r-1}\right) \\
		& = \frac{\size T\cdot p^{2r}+O(p^{2r-1})}{\size{F}^2p^{2r}} = \frac{\size T}{\size{F}^2}+O\left(\frac1p\right).
	\end{align*}
	Using \cref{prop:tdcOnFiniteQuotientIsLarger} and letting $p\to\infty$ implies that $\tdc_X(\varphi,G)\le\frac{\size T}{\size F^2\!\!}\,$.
\end{proof}
\begin{rem}
	This recovers \cref{thm:CharacterisationTdcNonZeroVirtuallyAbelian}. Indeed, one can take $g_1=1$ as representative of the coset $A$, in which case $d_{1,j}=d_{j,1}=c_1=0$. Moreover, we always have $\bar\varphi(1)=1$ and $M_1=I$. All in all, condition (\ref{eq:TrivialCondition}) is satisfied for some $(i,j)$ (equivalently, for some $(1,j)$) if and only if there exists $j\in F$ such that $M_j=L$, i.e., $\iota_{g_j}^{-1}=\varphi|_A$.
\end{rem}

\bigskip

We prove the analogous result for the twisted conjugacy ratio:
\begin{thm}\label{thm:va_tcr_indep}
	Let $G$ be a finitely generated virtually abelian group, and $\varphi\in\End(G)$. Then $\tcr_X(\varphi,G)$ is an effectively computable rational number, and
	\[ \tcr_X(\varphi,G) = \inf\{\tcr(\bar\varphi,G/N)\mid N\lhd_fG, \; \varphi(N)\le N\}.\]
	In particular, $\tcr_X(\varphi,G)$ is independent of $X$.
\end{thm}
\begin{proof}
	We use the same Notation as for the previous proof.
	For elements $g=g_ia$ and $x=g_jb$, twisted conjugation leads to the following computation:
	\begin{align*}
		xg\varphi(x)^{-1}
		& = g_jb\cdot g_ia \cdot \varphi(g_jb)^{-1} \\
		& = g_jg_i\cdot (M_ib)\cdot a \cdot (g_{\bar\varphi(j)}\cdot c_j\cdot Lb)^{-1} \\
		& = g_jg_i\cdot (M_ib)\cdot a \cdot (-Lb)\cdot (-c_j)\cdot g_{\bar\varphi(j)^{-1}}\cdot e_{\bar\varphi(j)} \\
		& = g_jg_ig_{\bar\varphi(j)^{-1}}\cdot \bigl(M_{\bar\varphi(j)}^{-1}(M_ib+a-Lb-c_j)+e_{\bar\varphi(j)}\bigr) \\
		& = g_{ji\bar\varphi(j)^{-1}} \cdot \bigl( M_{\bar\varphi(j)}^{-1}(a+(M_i-L)b-c_j)+d_{j,i\bar\varphi(j)^{-1}}+d_{i,\bar\varphi(j)^{-1}}+e_{\bar\varphi(j)}\bigr).
	\end{align*}
	To summarize, we have
	\begin{equation}
		xg\varphi(x)^{-1} = g_{ji\bar\varphi(j)^{-1}} \cdot \bigl( M_{\bar\varphi(j)}^{-1}(a+(M_i-L)b)+v_{i,j}\bigr) \label{eq:TwistedConjugateSpelledOut}
	\end{equation}
	where $v_{i,j}\coloneqq -M_{\bar\varphi(j)}^{-1}c_j +d_{j,i\bar\varphi(j)^{-1}} +d_{i,\bar\varphi(j)^{-1}} +e_{\bar\varphi(j)}$. We observe that $g$ has finitely many $\varphi$-conjugates if and only if $M_i=L$. In this case, there at most
	\[ C_i := \size{ \left\{\bigl( ji\bar\varphi(j)^{-1}, M_{\bar\varphi(j)}, v_{i,j}\bigr) \;\middle|\; j\in F\right\}}\]
	different $\varphi$-conjugates (since the only dependence left in \cref*{eq:TwistedConjugateSpelledOut} is through $j$ and more precisely through these parameters).
	We denote $U=\{i\in F\mid M_i=L\}$ and claim that
	\[ \tcr_X(\varphi,G) = \frac1{\size{F}} \sum_{i\in U}\frac1{C_i}.\]

	\bigskip
	
	\noindent Fix  a generating set $X$ for $G$. We have
	\begin{align*}
		\tcr_X(\varphi,G)
		& \ge \limsup_{n\to\infty} \frac1{\size{\B_{G,X}(n)}}\sum_{g\in \B_{G,X}(n)}\frac1{\size{[g]_\varphi}} \\
		& \ge \limsup_{n\to\infty} \frac1{\size{\B_{G,X}(n)}} \sum_{i\in U} \frac{\size{g_iA\cap \B_{G,X}(n)}}{C_i} \\
		& = \frac1{\size{F}} \sum_{i\in U}\frac1{C_i}
	\end{align*}
	by \cref{pepprop}. \bigskip
	
	\noindent Recycling the argument of \cref{lem:MatrixEqualIModp}, the following hold for $p$ large enough:
	\begin{itemize}
		\item $M_i\not\equiv L\pmod p$ for all $i\notin U$,
		\item $M_{\bar\varphi(j)}\not\equiv M_{\bar\varphi(k)}\pmod p$ for all $j,k$ such that $M_{\bar\varphi(j)}\ne M_{\bar\varphi(k)}$, and
		\item $v_{i,j}\not\equiv v_{i,k}\pmod p$ for all $i,j,k$ such that $v_{i,j}\ne v_{i,k}$.
	\end{itemize}
	We consider $N=pA$ (which is finite-index and fully characteristic in $A$, hence $N\lhd_f G$ and $N$ is $\varphi$-stable) for $p$ large enough. Using \cref{tcr SSE}, we have
	\[ \tcr_X(\varphi,G) \le \tcr(\bar\varphi,G/N) = \frac1{\size{G/N}}\sum_{gN\in G/N}\frac1{\size{[gN]_{\bar\varphi}}}. \]
	We need to find lower bounds on the number of different $\bar\varphi$-conjugates for each element $gN=g_iaN$ when $x=g_jb$ varies in \cref{eq:TwistedConjugateSpelledOut}:

	\begin{itemize}
		\item If $g=g_ia$ with $i\notin U$, then $M_i-L\not\equiv 0\pmod p$, hence $gN$ has at least $p$ different $\bar\varphi$-conjugates by just varying $b$ in \cref{eq:TwistedConjugateSpelledOut}, i.e., $\size{[gN]_{\bar\varphi}}\ge p$.
		
		\item If $g=g_ia$ with $i\in U$, then $M_i-L=0$ hence the dependence in $b$ in \cref{eq:TwistedConjugateSpelledOut} disappears. In particular, the number of $\bar\varphi$-conjugates of $gN$ is
		\begin{equation}	\label{eq:CountOfTwistedConjugates}
		\size{[gN]_{\bar\varphi}} = C_i
		\end{equation}
		\emph{except} if there exist $j,k\in F$ such that
		\begin{equation}
			M_{\bar\varphi(j)}^{-1}a + v_{i,j} \equiv  M_{\bar\varphi(k)}^{-1}a + v_{i,k} \pmod p \label{eq:SameTwistedConjugate}
		\end{equation}
		while $ji\bar\varphi(j)^{-1}=ki\bar\varphi(k)^{-1}$ and $(M_{\bar\varphi(j)},v_{i,j}) \ne (M_{\bar\varphi(k)},v_{i,k})$. For $i,j,k$ fixed, the affine equation (\ref{eq:SameTwistedConjugate}) admits at most $p^{r-1}$ solutions $a\in(\Z/p\Z)^r$. In total, we have at most $p^{r-1}\size F^3$ exceptions which still satisfy the trivial bound $\size{[gN]_{\bar\varphi}}\ge 1$.
	\end{itemize}
	Putting everything together, we get
	\begin{align*}
		\tcr_X(\varphi,G)
		& \le \frac1{\size{G/N}} \sum_{g\in G/N} \frac1{\size{[gN]_{\bar\varphi}}} \\
		& \le \frac1{p^r\size F} \left(\sum_{i\in U}
		\frac{\size{\bigl(g_iA/N-\{\text{exceptions}\}\bigr)}}{C_i}
		+ \frac{\size{\{\text{exceptions}\}}}1
		+ \sum_{i\notin U}\frac{\size{g_iA/N}}{p} \right) \\
		& \le \frac1{p^r\size F} \left(\sum_{i\in U} \frac{p^r}{C_i}
		+ \frac{p^{r-1}\size F^3}1
		+ \sum_{i\notin U}\frac{p^r}{p} \right) \\
		& =  \frac1{\size{F}} \sum_{i\in U}\frac1{C_i} + O\left(\frac1p\right).
	\end{align*}
	Letting $p\to\infty$ gives the matching upper bound on $\tcr_X(\varphi,G)$.
\end{proof}
\begin{rem}
	The fact that $\tcr_X(\varphi,G)$ is an effectively computable rational number already follows from results of Evetts and Lathouwers \cite[Theorem A]{EL24}.
\end{rem}
Combining Theorems \ref{thm:va_indep} and \ref{thm:va_tcr_indep} with \cref{rmk:finite groups tcr tdc}, we get
\begin{cor}\label{cor:eqtdctcr}
	Let $G$ be a finitely generated virtually abelian group, and $\varphi\in\End(G)$. Then $\tdc_X(\varphi,G)=\tcr_X(\varphi,G)$ for any finite generating set.
\end{cor}

\bigskip

\printbibliography

\newpage

\Address

\end{document}

